
\documentclass[english,10pt]{amsart}
\usepackage[english]{babel}

\usepackage[matrix,arrow]{xy}
\xyoption{all}
\usepackage{amscd,amssymb,amsfonts,amsmath}

\usepackage{graphics}
\usepackage{epsfig}
\usepackage{mathrsfs}

\unitlength=1cm

\newcommand\mypagesizel{
\textwidth= 6.5in
\textheight=9in
\voffset-.55in
\hoffset -0.75in
\marginparwidth=56pt
}

\newcommand{\Pic}{\textup{Pic}}

\newcommand{\Supp}{\textup{Supp}}

\newcommand{\Chow}{\textup{Chow}}

\newcommand{\p}[0]{{\mathbb P}}

\newcommand{\codim}{\textup{codim}}

\renewcommand{\phi}{\varphi}
\newcommand{\into}{\hookrightarrow}
\newcommand{\map}{\dashrightarrow}

\renewcommand{\le}{\leqslant}
\renewcommand{\ge}{\geqslant}

\newcommand{\bN}{\textbf{N}}

\newcommand{\bP}{\mathbb{P}}
\newcommand{\bQ}{\mathbb{Q}}
\newcommand{\bZ}{\mathbb{Z}}

\newcommand{\sE}{\mathscr{E}}
\newcommand{\sF}{\mathscr{F}}
\newcommand{\sG}{\mathscr{G}}
\newcommand{\sH}{\mathscr{H}}
\newcommand{\sI}{\mathscr{I}}

\newcommand{\sK}{\mathscr{K}}
\newcommand{\sL}{\mathscr{L}}
\newcommand{\sM}{\mathscr{M}}

\newcommand{\sO}{\mathscr{O}}

\newcommand{\sQ}{\mathscr{Q}}
\newcommand{\sR}{\mathscr{R}}

\newcommand{\sV}{\mathscr{V}}

\mypagesizel

\newtheorem{thm}{Theorem}[section]

\newtheorem{lemma}[thm]{Lemma}
\newtheorem{cor}[thm]{Corollary}

\newtheorem{prop}[thm]{Proposition}

\newtheorem*{thm*}{Theorem}
\theoremstyle{definition}
\newtheorem{defn}[thm]{Definition}

\newtheorem{say}[thm]{}
\newtheorem{exmp}[thm]{Example}

\newtheorem{notation}[thm]{Notation}

\newtheorem{defn-thm}[thm]{Definition-Theorem} 
\newtheorem{defn-lemma}[thm]{Definition-Lemma}

\newtheorem{rem}[thm]{Remark}

\theoremstyle{remark}
\newtheorem*{claim}{Claim}

\newtheorem*{not-and-def}{Notation and definitions}

\numberwithin{equation}{section}

\begin{document}

\title[On codimension 1 del Pezzo foliations]{On codimension 1 del Pezzo foliations on varieties with mild singularities}

\author{Carolina \textsc{Araujo}} 

\address{\noindent Carolina Araujo: IMPA, Estrada Dona Castorina 110, Rio de
  Janeiro, 22460-320, Brazil} 

\email{caraujo@impa.br}

\author{St\'ephane \textsc{Druel}}

\address{St\'ephane Druel: Institut Fourier, UMR 5582 du
  CNRS, Universit\'e Grenoble 1, BP 74, 38402 Saint Martin
  d'H\`eres, France} 

\email{druel@ujf-grenoble.fr}

\thanks{The first named author was partially supported by CNPq and Faperj Research 
  Fellowships}

\thanks{The second named author was partially supported by the CLASS project of the 
A.N.R}

\subjclass[2010]{14M22, 37F75}

\begin{abstract}
In this paper we extend to the singular setting the theory of Fano foliations developed in our previous paper
\cite{fano_foliation}. 
A $\bQ$-Fano foliation on a complex projective variety $X$ is a 
foliation $\sF\subsetneq T_X$ whose anti-canonical class $-K_{\sF}$ is an ample $\bQ$-Cartier divisor.
In the spirit of Kobayashi-Ochiai Theorem, we prove that under some conditions the index $i_{\sF}$ of a $\bQ$-Fano foliation 
is bounded by the rank $r$ of $\sF$, and classify the cases in which $i_{\sF}=r$. 
Next we consider $\bQ$-Fano foliations $\sF$ for which $i_{\sF}=r-1$.
These are called del Pezzo foliations. 
We classify codimension 1 del Pezzo foliations on mildly singular varieties.
\end{abstract}

\maketitle

\tableofcontents


\section{Introduction}

The study of Fano manifolds is  a classical theme in algebraic geometry.
Much is known about these varieties, as they appear naturally in several different contexts. 
With the development of the Minimal Model Program  in dimension $\ge 3$, 
it became clear that singularities are unavoidable in the birational classification 
of higher dimensional complex projective varieties. 
This led to the development of a powerful theory of  \emph{singularities of pairs}
(see Definition~\ref{Singularities of pairs} for basic notions, such as klt singularities).
As a consequence of the Minimal Model Program, every smooth complex projective varieties can be build up from
\emph{mildly singular} projective varieties $X$, for which  $K_X$ is $\bQ$-Cartier and satisfies one of the following:
$K_X<0$, $K_X\equiv 0$ or $K_X>0$. 
This makes imperative to extend the theory of Fano manifolds to the singular setting.
A $\bQ$-Fano variety is a normal projective variety $X$ such that $-K_X$ is  an ample $\bQ$-Cartier divisor.
Its index is  the largest positive rational number $i_X$ such that
$-K_{X} \sim_\bQ i_X H$ for a Cartier divisor $H$ on $X$.
When $X$ is an $n$-dimensional smooth Fano variety, a classical result of 
Kobayashi-Ochiai asserts that $i_X\le n+1$.
Moreover, equality holds if and only if $X\simeq\bP^n$, and $i_X= n$ if and only if 
$X$ is a  quadric hypersurface in $\bP^{n+1}$.
We generalize this to the singular setting (see also \cite{sano96} for related results).

\begin{thm}\label{thm:ko}
Let $\Delta$ be an effective $\bQ$-divisor on a normal projective variety $X$ of dimension $n\ge 1$.
Suppose that $K_X+\Delta$ is $\bQ$-Cartier, and $-(K_X+\Delta)\sim_\bQ i c_1(\sL)$ for 
an ample line bundle $\sL$ on $X$ and $i\in\bQ$.  

\begin{enumerate}
\item If $i > n$, then $n < i \le n+1$,
$(X,\sL) \simeq \big(\bP^n,\sO_{\bP^n}(1)\big)$ and
$\deg(\Delta)=n+1-i$.
\item If $i = n$, then either 
$(X,\sL)\simeq 
\big(\bP^n,\sO_{\bP^{n}}(1)\big)$ and 
$\deg(\Delta)=1$,  
or $\Delta=0$ and
$(X,\sL)\simeq (Q_{n},\sO_{Q_{n}}(1))$, where 
$Q_{n}$ is a (possibly singular) quadric hypersurface in $\bP^{n+1}$.
\end{enumerate}
\end{thm}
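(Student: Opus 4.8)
The plan is to deduce this singular Kobayashi–Ochiai statement from the smooth case by passing to a resolution, together with the rational connectedness result in Theorem~\ref{thm:rc} and a boundedness argument on the index. First I would observe that the hypothesis forces $(X,\Delta)$ — or at least $(X,0)$ — to behave like a Fano variety: since $-(K_X+\Delta)\sim_\bQ i\,c_1(\sL)$ with $\sL$ ample and $i>0$, and $\Delta$ is effective, $-K_X$ is ample, so $X$ is a $\bQ$-Fano variety. However, $X$ need not be klt here, so I cannot directly invoke Theorem~\ref{thm:rc}; instead I would argue geometrically with a general minimal rational curve or with a resolution.

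The cleanest route: let $f:\tilde X\to X$ be a resolution of singularities, and write $K_{\tilde X}=f^*K_X+\sum a_jE_j$ where the $E_j$ are $f$-exceptional. Then $f^*(-K_X)$ is nef and big on $\tilde X$, and $-K_{\tilde X}=f^*(-K_X)+\sum a_jE_j$. The difficulty is that the discrepancies $a_j$ may be negative, so $\tilde X$ itself need not be Fano. To get around this I would instead take a general complete intersection curve argument, or — better — reduce to the case $\Delta = 0$ and $X$ Gorenstein canonical by the following standard trick: replace $\sL$-multiples appropriately and use that $-(K_X+\Delta)$ ample forces, via the cone theorem on $X$ (which holds for $\bQ$-Fano $X$ with $\bQ$-Cartier $K_X$ once we know $-K_X$ ample), the existence of a rational curve $C$ with $-K_X\cdot C \le n+1$ by the bound of Mori/Kawamata on the length of extremal rays on varieties with log canonical singularities — but since we do not even assume log canonical, the safest statement to use is the bend-and-break estimate for the free minimal rational curve on the smooth locus. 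Concretely: pick a general point $x$ in the smooth locus $X_{\mathrm{sm}}$; since $-K_X$ is ample, $X$ is uniruled, and there is a rational curve $C$ through $x$ with $-K_X\cdot C \le n+1$, normalized by $\bP^1\to X$ landing generically in $X_{\mathrm{sm}}$. Intersecting $-(K_X+\Delta)\sim_\bQ i\,c_1(\sL)$ with $C$ gives $i\,(\sL\cdot C) \le -K_X\cdot C \le n+1$, and since $\sL\cdot C$ is a positive integer, $i\le n+1$, with equality forcing $\sL\cdot C=1$, $-K_X\cdot C=n+1$, $\Delta\cdot C=0$.

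From $\sL\cdot C = 1$ and $-K_X\cdot C=n+1$ I would conclude $X\simeq\bP^n$ and $\sL\simeq\sO_{\bP^n}(1)$: having a curve of degree $1$ with respect to an ample Cartier divisor $\sL$, through a general point, covering $X$, one recovers that $(X,\sL)\cong(\bP^n,\sO(1))$ — for instance by the characterization of $\bP^n$ via a covering family of lines, or by noting that the free rational curve $C$ has $-K_X\cdot C = n+1 = \dim X +1$ which by the classical Cho–Miyaoka–Shepherd-Barron / Kebekus characterization (valid on varieties with a smooth general point and a free curve) gives $X\cong\bP^n$. Once $X=\bP^n$, $\Delta$ is an effective $\bQ$-divisor with $-(K_{\bP^n}+\Delta)\sim_\bQ i\,\sO(1)$, i.e. $\deg\Delta = n+1-i$, proving (1). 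For (2), when $i=n$: either $\sL\cdot C=1$ for the minimal free curve, landing us in the $\bP^n$ case again with $\deg\Delta = 1$; or every such curve has $\sL\cdot C\ge 2$, hence $\sL\cdot C = 2$ exactly and then $\Delta\cdot C=0$ and $-K_X\cdot C = n$. Applying the Kobayashi–Ochiai characterization of quadrics — the version for possibly singular $X$ with a covering family of conics, equivalently $-K_X\sim_\bQ (n-1)\sL'$ type analysis, or directly: a variety swept out by a family of $\sL$-conics with $-K_X\cdot C=n$ is a (possibly singular) quadric $Q_n\subset\bP^{n+1}$ with $\sL=\sO_{Q_n}(1)$ — gives the second alternative, and then $\deg$ considerations force $\Delta=0$ since $-(K_{Q_n}+\Delta)\sim_\bQ n\,\sO(1) = -K_{Q_n}$ already.

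The main obstacle I anticipate is the singularity bookkeeping: justifying the existence of the minimal rational curve $C$ with $-K_X\cdot C\le n+1$ through a general (smooth) point, and controlling that it meets $\Delta$ with the expected multiplicity, on a variety for which we have assumed \emph{no} singularity condition beyond normality and $\bQ$-Gorenstein. One must be careful that bend-and-break and the length estimate are applied on the normalization of the curve or after passing to the smooth locus, and that "$\Delta\cdot C = 0$" genuinely follows (it does, because $C$ is general and $\Delta$ is a fixed divisor, so $C\not\subset\operatorname{Supp}\Delta$ and $C$ meets $\operatorname{Supp}\Delta$ in a finite set contributing nonnegatively, which combined with the equality in $i\,(\sL\cdot C)=-K_X\cdot C-\Delta\cdot C$ at the extreme value pins it down). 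The characterization steps for $\bP^n$ and $Q_n$ are then cited from the literature on singular Kobayashi–Ochiai.
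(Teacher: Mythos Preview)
Your approach has genuine gaps that stem from the fact that the theorem assumes \emph{no} singularity hypothesis on $X$ beyond normality and $K_X+\Delta$ being $\bQ$-Cartier. In particular, $K_X$ itself is not assumed to be $\bQ$-Cartier, so ``$-K_X$ is ample'' is not even well-posed, and the bend-and-break length estimate $-K_X\cdot C\le n+1$ (which requires at least log canonical singularities) is unavailable. You acknowledge this worry but do not resolve it; the Cho--Miyaoka--Shepherd-Barron and Kebekus characterizations you invoke likewise need smoothness or a strong control on singularities that is simply absent here. There is also a concrete error in your case split for (2): on a quadric $Q_n\subset\bP^{n+1}$ the minimal free rational curve is a line with $\sL\cdot C=1$, not $2$, so ``$\sL\cdot C=1$ forces $\bP^n$'' is false, and your dichotomy never reaches the quadric.

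The paper avoids all of this by a cohomological argument due to Fujita. One passes to a resolution $\pi:Y\to X$ and, using Serre duality on $Y$ together with the fact that $\pi$ is an isomorphism in codimension one, bounds $h^n(Y,\pi^*\sL^{\otimes -t})$ by $h^0\big(X,\sO_X(K_X)\otimes\sL^{\otimes t}\big)$. Since $K_X+t\,c_1(\sL)\sim_\bQ -\Delta+(t-i)c_1(\sL)$, this $h^0$ vanishes for $1\le t\le n$ when $i>n$ (and for $1\le t\le n-1$ when $i=n$). Fujita's characterization theorems \cite[Theorems 2.2 and 2.3]{fujita89} then produce a birational morphism from $X$ to $\bP^n$ or to a quadric, which is an isomorphism because $\sL$ is ample. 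This route needs no MMP, no rational curves, and no singularity hypotheses beyond what is stated.
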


The Fano condition and the notion of index 
naturally find counterparts in the theory of holomorphic foliations.
Given a holomorphic foliation $\sF\subsetneq T_X$ of  generic rank $r$ on a normal complex projective variety $X$, 
we denote by $\det(\sF)$  the reflexive sheaf $(\wedge^r \sF)^{**}$,
and by $-K_{\sF}$ any Weil divisor on $X$ such that $\sO_X(-K_{\sF})\simeq \det(\sF)$. 
We say that $\sF$ is a \emph{$\bQ$-Fano foliation}  if $-K_{\sF}$ is an ample $\bQ$-Cartier divisor on $X$.
In this case, the \emph{index  of  $\sF$} is the largest positive rational number $i_{\sF}$ such that
$-K_{\sF} \sim_\bQ i_{\sF} H$ for a Cartier divisor $H$ on $X$.

In our previous paper \cite{fano_foliation}, inspired by the theory of Fano manifolds and 
the Minimal Model Program, 
we initiated the study of Fano foliations on complex projective manifolds.
By \cite[Theorem 1.1]{adk08}, the index $i_{\sF}$ of 
a Fano foliation $\sF\subsetneq T_X$ of rank $r$ on a
complex projective manifold $X$ satisfies  $i_{\sF}\le r$. 
Moreover, equality holds only if $X\simeq \p^n$.
This may be viewed as a version for foliations of the classical Kobayashi-Ochiai
Theorem.
Fano foliations on $\bP^n$ with index equal to the rank
were classified in  \cite[Th\'eor\`eme 3.8]{cerveau_deserti}.
They are induced by  linear projections $\p^n \dashrightarrow \p^{n-r}$, and are classically known as 
\emph{degree $0$ foliations on $\bP^n$}.
In \cite{fano_foliation}, we defined \textit{del Pezzo foliations} on complex projective manifolds $X$
as Fano foliations $\sF\subsetneq T_X$ of rank $r\ge 2$ and index $i_{\sF} = r-1$.
Del Pezzo foliations $\sF$  on ${\p^n}$  (classically  known as \emph{degree $1$ foliations} on ${\p^n}$) 
were classified in 
\cite[Theorem 6.2]{lpt3fold}:
\begin{enumerate}
\item Either $\sF$ is induced by a dominant rational  map $\p^n\dashrightarrow \p(1^{n-r},2)$,
defined by $n-r$ linear forms and one quadratic form, or 
\item $\sF$ is the linear pullback of a foliation on $\p^{n-r+1}$ induced by a global 
holomorphic vector field.
\end{enumerate}
The generic del Pezzo foliation of type (2) above does not have algebraic leaves.
On the other hand, we showed in \cite[Theorem 1.1]{fano_foliation} that if $X\not\simeq \p^n$, then
del Pezzo foliations on $X$ have algebraic and rationally connected general leaves.
We then worked toward the classification of del Pezzo foliations.

In this paper we address $\bQ$-Fano foliations on possibly \emph{singular} varieties. 
Our main goal is to extend the theory developed in \cite{fano_foliation} to the singular setting. 
There is at least one strong reason for studying this more general case.
The birational classification of rank $1$ foliations on smooth surfaces
obtained in \cite{brunella} was very much modeled after the birational classification 
of smooth surfaces. 
If one hopes to apply ideas from the Minimal Model Program to approach the problem of birational classification
of holomorphic foliations in higher dimensions, 
then one may be led to study foliations on mildly singular varieties as well.

We first prove a version for foliations on midly singular varieties of the Kobayashi-Ochiai
Theorem. See Definition~\ref{Normal generalized cones} for the notion of normal generalized cone,
and \cite{wahl83}, \cite{andreatta_wisniewski}, \cite{druel04}, \cite{adk08}, and 
\cite{hoeringKOfoliations} for related results.

\begin{thm}\label{thm:codimension_r_Fano_index__equal_rank_picard_number_1}
Let $X$ be an $n$-dimensional $\bQ$-factorial klt projective variety with $n\ge 2$, and
$\sF\subsetneq T_X$ a $\bQ$-Fano foliation of rank $r$.
Suppose that either $X$ has Picard number $\rho(X)=1$, or $\sF$ has rank $r=n-1$. 
Then 
\begin{enumerate}
	\item $i_{\sF}\le r$.
	\item If $i_{\sF}= r$, then $X$ is a normal generalized cone over a polarized  variety $(Z,\sM)$ 
		with vertex $P\simeq\bP^{r-1}$, and $\sF$ is induced by the natural map $X\dashrightarrow Z$. 
		Moreover, $Z$ is $\bQ$-factorial and klt, $\rho(Z)=1$ and $i_Z=k(i_X-r)>0$ for some positive integer $k$.
\end{enumerate}
\end{thm}

Next we consider del Pezzo foliations, i.e., 
$\bQ$-Fano foliations of rank $r\ge 2$ and index $i_{\sF} = r-1$.
In the smooth case, 
del Pezzo foliations of codimension $1$ on Fano manifolds with Picard 
number $1$ were classified in \cite[Proposition 3.7]{lpt3fold}:
they are either degree $1$ foliations on $\bP^n$, or come from 
pencils of hyperplane sections of quadric hypersurfaces. 
We extend this classification to mildly singular varieties, without restriction on
the Picard number.

\begin{thm}\label{thm:classification_del_pezzo}
Let $\sF\subsetneq T_X$ be a codimension $1$ del Pezzo foliation on 
a factorial  klt projective variety $X$ of dimension $n\ge 3$.
\begin{enumerate}
\item Suppose that $\rho(X)=1$. Then $X$ is a $\bQ$-Fano variety and one of the following holds.
\begin{enumerate}
\item $\sF$ is a degree $1$ foliation on $X\simeq\bP^n$.
\item $X$ is isomorphic to a (possibly singular) quadric hypersurface $Q_n\subset \bP^{n+1}$, and $\sF$ is a pencil of hyperplane sections of $Q_n$.
\item $X$ is a normal generalized cone over a projective normal surface $Z$ with $\rho(Z)=1$, and 
$\sF$ is the pullback by $X\dashrightarrow Z$
of a foliation induced by a nonzero global section of $T_Z$.
Moreover, $Z$ is factorial with klt singularities, $i_X>n-2$ and
$i_Z=k(i_X -n+2)$ for some positive integer $k$.
\end{enumerate}
\item Suppose that $\rho(X) \ge 2$.  
Then  there is an exact sequence of vector bundles 
$0\to \sK\to \sE\to \sV \to 0$ on $\bP^1$ such that $X\simeq \p_{\p^1}(\sE)$, and $\sF$ is the pullback via the relative linear projection $X\map Z=\p_{\p^1}(\sK)$ of a foliation 
on $Z$  induced by a nonzero global section of $T_Z\otimes q^*\det(\sV)^*$.
Here $q:Z\to \p^1$ denotes the natural projection.  
Moreover, one of the following holds.
\begin{enumerate}
\item 
$(\sE,\sK)\simeq \big(\sO_{\bP^1}(2)\oplus \sO_{\bP^1}(a)^{\oplus 2},\sO_{\bP^1}(a)^{\oplus 2}\big)$
for some positive integer $a$.
\item 
$(\sE,\sK)\simeq \big(\sO_{\bP^1}(1)^{\oplus 2}\oplus \sO_{\bP^1}(a)^{\oplus 2},\sO_{\bP^1}(a)^{\oplus 2}\big)$
for some positive integer $a$.
\item 
$(\sE,\sK)\simeq\big(\sO_{\bP^1}(1)\oplus \sO_{\bP^1}(a)\oplus \sO_{\bP^1}(b),\sO_{\bP^1}(a)\oplus \sO_{\bP^1}(b)\big)$ for distinct 
positive integers $a$ and $b$.
\end{enumerate}
\end{enumerate}
\end{thm}

One of the key ingredients of our proofs of Theorem~\ref{thm:codimension_r_Fano_index__equal_rank_picard_number_1}
and \ref{thm:classification_del_pezzo} is the following result, which illustrates the strong restrictions 
imposed by the existence of a $\bQ$-Fano foliation on a variety $X$.

\begin{thm}\label{thm:main_foliation} \label{thm:conormal_fano_fols}
Let $X$ be a klt projective variety, and $\sF\subsetneq T_X$ a $\bQ$-Fano foliation.
Then $K_X-K_\sF$ is not pseudo-effective.
\end{thm}

When $\rho(X)=1$, Theorem~\ref{thm:main_foliation} allows us to bound the index of $X$ from below: $i_X > i_\sF$.
When $\rho(X)>1$, it bounds from below the length of extremal rays of $X$.
When $\sF$ is a codimension $1$ del Pezzo foliation, these bounds are good enough for us to recover $X$.
This is not the case for higher codimension or smaller index $i_\sF$.


\medskip

The paper is organized as follows. 

In Section~\ref{section:Q-Fano}, we review basic definitions of singularities of pairs,
gather some results about $\bQ$-Fano varieties and singular del Pezzo varieties, and prove
a vanishing result (Theorem~\ref{thm:main}) that has useful applications to the theory of $\bQ$-Fano foliations.
In particular, it allows us to prove Theorem~\ref{thm:conormal_fano_fols}. 

In Section~\ref{section:foliations}, we develop the basic theory of 
foliations and Pfaff fields on mildly singular varieties, and prove Theorem~\ref{thm:conormal_fano_fols}.

In Section~\ref{section:ko}, in the spirit of Kobayashi-Ochiai Theorem,
we investigate upper bounds for the index of $\bQ$-Fano foliations.
In particular, 
we prove Theorem~\ref{thm:codimension_r_Fano_index__equal_rank_picard_number_1}. 

Section~\ref{section:dp} is devoted to del Pezzo foliations, and
contains the proof of Theorem~\ref{thm:classification_del_pezzo}.

In Section~\ref{section:regular}, we discuss singularities of $\bQ$-Fano foliations. 
In particular, we prove the following result, which says that  
that codimension $1$ $\bQ$-Fano foliations on midly singular varieties must be singular in codimension $2$.

\begin{thm}\label{thm:regular_fano_fols}
Let $\sF\subsetneq T_X$ be a codimension 1 
foliation on a klt projective variety $X$. 
Suppose that $X$ and $\sF$ are both regular in codimension 2.
Then $-K_\sF$ is not ample.
\end{thm}

\

\noindent {\bf Notation and conventions.}
We always work over the field ${\mathbb C}$ of complex numbers. 
Varieties are always assumed to be irreducible and reduced.
We denote by $\textup{Sing}(X)$ the singular locus of a variety $X$.

Let $X$ be a normal variety.
Given a Weil divisor $D$ on $X$, 
we denote by $\sO_X(D)$
the rank $1$ reflexive sheaf defined by: 
$H^0(U,\sO_X(D)_{|U})=\big\{f\in k(X)\setminus \{0\}\ \big| \ \textup{div}(f)+D_{|U}\ge 0\big\}\cup \{0\}$ for any open subset $U\subset X$.
Given a rank $1$ reflexive sheaf  $\sF$  on  $X$, then, by abuse of notation, we denote by
$c_1(\sF)$ any Weil divisor $D$ on $X$ such that $\sO_X(D)\simeq \sF$.

Given two $\bQ$-Weil divisors $D$ and $D'$ on a normal variety $X$, we say that $D \le D'$ if
$D'-D$ is effective.


Let $X$ be a normal variety, and $D$ a $\bQ$-Weil divisor on $X$.
Let $\pi : Y \to X $ be a finite morphism such that, for any prime
divisor $P\subset Y$, $X$ is smooth at the generic point of $\pi(P)$. 
Then the pullback of $D$ to $Y$ is well defined, and we denote it either by $\pi^*D$ or by $D_{|Y}$.

If $\sE$ is a locally free sheaf of $\sO_X$-modules on a variety $X$, we denote by $\p_X(\sE)$ the Grothendieck projectivization $\textup{Proj}_X(\textup{Sym}(\sE))$.

Given a sheaf $\sF$ of $\sO_X$-modules on a variety $X$, we denote by $\sF^{*}$ the sheaf 
$\sH\hspace{-0.1cm}\textit{om}_{\sO_X}(\sF,\sO_X)$.
If $r$ is the generic rank of $\sF$, then we denote by $\det (\sF)$ the sheaf $(\wedge^r \sF)^{**}$.
For $m\in\bN$, we denote by  $\sF^{[m]}$ the sheaf $(\sF^{\otimes m})^{**}$.
If $\sG$ is another sheaf of $\sO_X$-modules on $X$, then we denote by  $\sF[\otimes]\sG$ the sheaf $(\sF\otimes\sG)^{**}$.
If $\pi:Y \to X$ is a morphism of varieties, then we write $\pi^{[*]}\sF$ for $(\pi^*\sF)^{**}$.

If $X$ is a normal variety and $X\to Y$ is any morphism, we denote by
$T_{X/Y}$ the sheaf $(\Omega_{X/Y}^1)^*$. In particular, $T_X=(\Omega_{X}^1)^*$.
If $D$ is a reduced divisor on $X$ and $q\in\bN$, then we denote by 
$\Omega_X^{[q]}$ the sheaf $(\Omega_X^{q})^{**}$, and by 
$\Omega_X^{[q]}(\textup{log }D)$ the sheaf of reflexive differential $q$-forms with logarithmic poles
along $D$. 

\

\noindent {\bf Acknowledgements.}
Much of this work was developed during the authors' visits to IMPA and Institut Fourier.
We would like to thank both institutions for their support and hospitality. 
We would also like to thank the referee for helpful comments.


\section{$\bQ$-Fano varieties, del Pezzo varieties and a vanishing result}\label{section:Q-Fano}

In this section we gather some results about $\bQ$-Fano varieties and singular del Pezzo varieties. 
At the end of the section we prove a vanishing result that has useful applications to the theory of $\bQ$-Fano foliations.
We start by recalling some definitions of singularities of pairs, developed in the context of the Minimal Model Program.

\subsection{Singularities of pairs}

\begin{defn}[See {\cite[section 2.3]{kollar_mori}}]\label{Singularities of pairs}
Let $X$ be a normal projective variety, and
$\Delta=\sum a_i\Delta_i$ an effective $\bQ$-divisor on $X$, i.e., $\Delta$ is  a nonnegative $\bQ$-linear combination 
of distinct prime Weil divisors $\Delta_i$'s on $X$. 
Suppose that $K_X+\Delta$ is $\bQ$-Cartier, i.e.,  some nonzero multiple of it is a Cartier divisor. 

Let $f:\tilde X\to X$ be a log resolution of the pair $(X,\Delta)$. 
This means that $\tilde X$ is a smooth projective
variety, $f$ is a birational projective morphism whose exceptional locus is the union of prime divisors $E_i$'s, 
and the divisor $\sum E_i+\tilde{\Delta}$ has simple normal crossing 
support, where $\tilde{\Delta}$ denotes the strict transform of $\Delta$ in $X$.  
There are uniquely defined rational numbers $a(E_i,X,\Delta)$'s such that
$$
K_{\tilde X}+\tilde{\Delta} = f^*(K_X+\Delta)+\sum_{E_i}a(E_i,X,\Delta)E_i.
$$
The $a(E_i,X,\Delta)$'s do not depend on the log resolution $f$,
but only on the valuations associated to the $E_i$'s. 

We say that $(X,\Delta)$ is \emph{canonical} if  all $a_i\le 1$, and, for some  log resolution 
$f:\tilde X\to X$ of $(X,\Delta)$, $a(E_i,X,\Delta)\ge 0$ 
for every $f$-exceptional prime divisor $E_i$.
We say that $(X,\Delta)$ is \emph{log terminal} (or \emph{klt})  if  all $a_i<1$, and, for some  log resolution 
$f:\tilde X\to X$ of $(X,\Delta)$, $a(E_i,X,\Delta)>-1$ 
for every $f$-exceptional prime divisor $E_i$.
If these conditions hold for some log resolution of $(X,\Delta)$, then they hold for every  
log resolution of $(X,\Delta)$.

We say that $X$ is canonical (respectively  klt)  if so is $(X,0)$.
Note that if $X$ is Gorenstein, i.e., $K_X$ is Cartier, then $X$ is canonical if and only if it is  klt.

\end{defn}

We will need the following observation.

\begin{lemma}\label{lemma:proj_klt}
Let $r$ be a positive integer, and $\Delta$ an effective $\bQ$-divisor on $\bP^r$ of degree $\le 1$. 
Then $(\bP^r,\Delta)$ is klt unless $\Delta=H$, where $H$ is a hyperplane. 
\end{lemma}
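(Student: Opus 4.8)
The plan is to reduce to the one-dimensional case by choosing a generic line and restricting, then to handle curves directly. First I would recall that klt is equivalent to the condition that for a log resolution $f:\tilde X\to X$ all discrepancies $a(E_i,X,\Delta)>-1$, and that for $\bP^r$ with $r\ge 2$ one can detect klt-ness by cutting with a general hyperplane: by Bertini, a general line $L\subset\bP^r$ meets $\Delta$ transversally away from $\mathrm{Sing}(\Delta)$, and inversion of adjunction (or just the direct computation, since everything here is explicit) tells us that $(\bP^r,\Delta)$ is klt near a point $p$ iff $(L,\Delta|_L)$ is klt near $p$ for $L$ a general line through $p$. So it suffices to treat $r=1$.

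For $r=1$, write $\Delta=\sum a_i p_i$ with $a_i>0$ and $\sum a_i=\deg\Delta\le 1$. A resolution is not even needed: $\bP^1$ is smooth, so $(\bP^1,\Delta)$ is klt iff all coefficients $a_i<1$ (this is the definition of klt for a pair on a smooth variety with snc support, taking $f=\mathrm{id}$). Now $\sum a_i\le 1$ forces $a_i\le 1$ for every $i$, with $a_i=1$ for some $i$ possible only when $\Delta$ is a single point with coefficient exactly $1$, i.e.\ $\Delta=p$ a point (a hyperplane in $\bP^1$). In all other cases all $a_i<1$ strictly and the pair is klt.

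Putting these together: if $\Delta\ne H$ for a hyperplane $H$, then restricting to a general line $L$ we get $\Delta|_L$ of degree $\le 1$ which is not a single reduced point with coefficient $1$ — here one must check that if $\Delta$ has a component of coefficient $1$ then that component is forced to be a hyperplane by the degree bound, since a coefficient-$1$ component alone already has degree $\ge 1$, hence equals $\Delta$ and has degree exactly $1$, so it is a hyperplane — contradiction. Thus every component of $\Delta$ has coefficient $<1$, the restriction $\Delta|_L$ has all coefficients $<1$, so $(L,\Delta|_L)$ is klt, and therefore $(\bP^r,\Delta)$ is klt. The main obstacle is being careful with the reduction to the general line: one needs that a general line avoids the (codimension $\ge 2$) locus where several components of $\Delta$ meet and meets each component transversally, so that $\Delta|_L=\sum a_i(\Delta_i\cap L)$ has coefficients exactly the $a_i$ and the pair $(L,\Delta|_L)$ has snc support; granting this, the equivalence of klt-ness upstairs and on the general hyperplane section is standard.
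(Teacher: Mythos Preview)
Your reduction to a general line through a fixed point $p$ has a genuine gap. The direction of inversion of adjunction you need --- $(L,\Delta|_L)$ klt at $p$ implies $(\bP^r,\Delta)$ klt at $p$ --- is correct (this follows from the restriction theorem for multiplier ideals), but you cannot verify the hypothesis when $p$ lies in the singular locus of $\textup{Supp}(\Delta)$. Your claim that ``the restriction $\Delta|_L$ has all coefficients $<1$'' relies on $L$ meeting each component $\Delta_i$ transversally, and this fails precisely at the points that matter: if $p\in\textup{Sing}(\Delta_i)$ then a line through $p$ meets $\Delta_i$ with local multiplicity $\textup{mult}_p\Delta_i\ge 2$, and the coefficient in $\Delta|_L$ jumps accordingly.

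Concretely, take $r=2$ and $\Delta=\tfrac{1}{2}C$ with $C\subset\bP^2$ a conic with a node at $p$. Then $\deg\Delta=1$, $\Delta$ is not a hyperplane, and $(\bP^2,\Delta)$ is klt (locally at $p$ it is $\tfrac{1}{2}\{x=0\}+\tfrac{1}{2}\{y=0\}$, simple normal crossing with coefficients $<1$). But for a general line $L$ through $p$ one has $(C\cdot L)_p=2$, so $\Delta|_L=1\cdot p$ and $(L,\Delta|_L)$ is only log canonical, not klt. Your argument therefore cannot conclude klt-ness at $p$. Note also that your stated ``iff'' is false: the nodal conic shows that $(\bP^r,\Delta)$ klt at $p$ does \emph{not} imply $(L,\Delta|_L)$ klt at $p$ for a general line through $p$.

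The paper avoids this by arguing with multiplicities rather than reducing all the way to a curve. One observes that $\textup{mult}_p\Delta\le\deg\Delta\le 1$; if the inequality is strict, klt-ness follows immediately from the criterion $\textup{mult}_p\Delta<1\Rightarrow$ klt (\cite[Proposition 9.5.13]{lazarsfeld}). If $\textup{mult}_p\Delta=1$, then every $\Delta_i$ is a cone with vertex $p$, and one restricts to a general \emph{plane} $P$ through $p$: the support of $\Delta|_P$ becomes a union of distinct lines through $p$, a single blow-up gives a log resolution with discrepancy $1-\deg\Delta\ge 0$, and one invokes \cite[Corollary 9.5.11]{lazarsfeld} to go back up. The point is that on the plane the situation is still two-dimensional enough to separate the branches, which is exactly what fails on a line.
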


\begin{proof}
When $r=1$, the result is clear. We assume from now on that $r\ge 2$. 

Write $\Delta=\sum_{1\le i\le k}d_i \Delta_i$, where $\Delta_i\subset \bP^r$ is an integral hypersurface, 
$\Delta_i\neq\Delta_j$ if $i\neq j$ and the $d_i$'s are positive rational numbers. 
By assumption, $\deg(\Delta)=\sum_{1\le i\le k}d_i\,\deg(\Delta_i)\le 1$.
Suppose that either $k\ge 2$, or $k=1$ and $\deg(\Delta_1)\ge 2$. Let $p\in\bP^r$ be a point. We shall show that $(X,\Delta)$ is klt at $p$.

Observe that
$$\textup{mult}_p\,\Delta=\sum_{1\le i\le k}d_i\,\textup{mult}_p\,\Delta_i \le \sum_{1\le i\le k}d_i\,\deg(\Delta_i)=\deg(\Delta)\le 1.$$

If $\textup{mult}_p\,\Delta<1$, then $(\bP^r,\Delta)$ is klt at $p$ by \cite[Proposition 9.5.13]{lazarsfeld}.

If $\textup{mult}_p\,\Delta=1$, then
$\textup{mult}_p\,\Delta_i=\deg(\Delta_i)$ for every $1 \le i\le k$. Thus
$\Delta_i$ is a cone in $\bP^r$ with vertex $p$ for every $1 \le i\le k$.

Let $P\subset \bP^r$ be a general plane passing through $p$.
Then $\textup{Supp}(\Delta)_{|P}$ is a union of $\sum_{1\le i\le k}\deg(\Delta_i)$ distinct lines through $p$. 
Let $S \to P$ be the blow-up of $P$ at $p$, with exceptional locus $E$. Then $S$ is a log resolution of the pair $(P,\Delta_{|P})$ and, $a(E,P,\Delta_{|P})=1-\deg(\Delta)\ge 0$. 
Thus, 
$(P,\Delta_{|P})$ is klt at $p$.
This implies that 
$(X,\Delta)$ is klt at $p$, by \cite[Corollary 9.5.11]{lazarsfeld}). 
\end{proof}

\subsection{$\bQ$-Fano varieties}

\begin{defn}
Let $X$ be a normal projective variety such that $K_X$ is $\bQ$-Cartier. 
We say that $X$ is \emph{$\bQ$-Fano} if $-K_X$ is ample.
 In this case, the \emph{index of $X$} is the largest positive rational number $i_X$ such that
$-K_{\sF} \sim_\bQ i_X H$ for an ample Cartier divisor $H$ on $X$.
\end{defn}

The following geometric property of klt $\bQ$-Fano varieties will be fundamental in our study of $\bQ$-Fano foliations.

\begin{thm}[{\cite[Corollaries 1.3 and 1.5]{hacon_mckernan}}]\label{thm:rc}
Let $(X,\Delta)$ be a klt pair.
If $-(K_X+\Delta)$ is nef and big, then $X$ is rationally connected.
\end{thm}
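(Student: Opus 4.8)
The plan is to show that the maximal rationally connected (MRC) fibration $\pi\colon X\map Z$ of $X$ is trivial, i.e. that $\dim Z=0$. I would first reduce to the case in which $-(K_X+\Delta)$ is ample: by Kodaira's lemma one can write $-(K_X+\Delta)\sim_{\bQ}A+E$ with $A$ an ample $\bQ$-divisor and $E\ge 0$ an effective $\bQ$-Cartier divisor, and then for $0<t\ll 1$ the divisor $-(K_X+\Delta+tE)\sim_{\bQ}tA+(1-t)\big(-(K_X+\Delta)\big)$ is ample while $(X,\Delta+tE)$ is still klt by openness of the klt condition; so we may replace $\Delta$ by $\Delta+tE$. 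In particular $-K_X=\big(-(K_X+\Delta)\big)+\Delta$ is big, so $X$ is uniruled by the Miyaoka--Mori criterion, and $\dim Z<\dim X$.

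Assume for contradiction that $\dim Z\ge 1$. By Graber--Harris--Starr (in the form that the base of the MRC fibration of a smooth projective variety is not uniruled), a smooth projective model $Z'$ of $Z$ is not uniruled, hence $K_{Z'}$ is pseudo-effective by Boucksom--Demailly--P\u{a}un--Peternell. Resolving the MRC map, choose a birational morphism $p\colon Y\to X$ with $Y$ smooth projective and a morphism $g\colon Y\to Z'$ inducing $\pi$, so that the general fibre $F$ of $g$ is rationally connected. Since $(X,\Delta)$ is klt we may write $K_Y+\Gamma=p^*(K_X+\Delta)+E_Y$ with $\Gamma\ge 0$, $\lfloor\Gamma\rfloor=0$, and $E_Y\ge 0$ an effective $p$-exceptional divisor; then $(Y,\Gamma)$ is klt.

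The core of the argument is to push the negativity of $K_X+\Delta$ down to $Z'$ via the canonical bundle formula. Applying Kawamata's subadjunction (and the work of Fujino--Mori and Ambro) to $g\colon(Y,\Gamma)\to Z'$, after replacing $Z'$ by a suitable higher birational model one obtains
$$K_Y+\Gamma\sim_{\bQ}g^*\big(K_{Z'}+B_{Z'}+M_{Z'}\big),$$
with $B_{Z'}\ge 0$ the discriminant part (effective because $(Y,\Gamma)$ is log canonical over the codimension $1$ points of $Z'$) and $M_{Z'}$ nef; here one invokes the semipositivity of the moduli part, using that the rationally connected fibre $F$ carries no nonzero pluricanonical forms so that the relevant pushforward sheaves behave as required. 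Since $K_{Z'}$ is pseudo-effective, $B_{Z'}$ effective and $M_{Z'}$ nef, the class $K_{Z'}+B_{Z'}+M_{Z'}$ is pseudo-effective, hence so is $g^*\big(K_{Z'}+B_{Z'}+M_{Z'}\big)=p^*(K_X+\Delta)+E_Y$ on $Y$. Fix an ample divisor $L$ on $X$ and put $n=\dim X$. Then $\big(p^*(K_X+\Delta)+E_Y\big)\cdot(p^*L)^{n-1}=(K_X+\Delta)\cdot L^{n-1}<0$, because $-(K_X+\Delta)$ is ample and $E_Y\cdot(p^*L)^{n-1}=0$ as $E_Y$ is $p$-exceptional; this contradicts pseudo-effectivity, since a pseudo-effective class has nonnegative intersection with the nef curve class $(p^*L)^{n-1}$. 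Therefore $\dim Z=0$, i.e. $X$ is rationally connected.

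The main obstacle is the descent in the third paragraph: one needs tight control of the canonical bundle formula along the MRC fibration, namely that the boundary part $B_{Z'}$ remains effective and that the moduli part $M_{Z'}$ is nef after a birational modification. This relies on the semipositivity theorems of Kawamata, Ambro and Fujino together with Viehweg's weak positivity, the input particular to our situation being the vanishing of pluricanonical forms on the rationally connected fibres. A further technical point is that $-(K_X+\Delta)$, although ample on $X$, becomes only nef and big after pullback to the resolution $Y$ needed to make $\pi$ a morphism, so the discrepancy correction $E_Y$ must be dealt with carefully; testing pseudo-effectivity against the pulled-back polarization $(p^*L)^{n-1}$, against which $E_Y$ is numerically trivial, is what makes this harmless. (Alternatively one can avoid the canonical bundle formula and argue more directly, following Zhang, using Miyaoka's generic semipositivity of $\Omega^1_{Z'}$ together with a careful choice of complete intersection curves on $Y$, at the cost of a more delicate estimate involving $E_Y$.)
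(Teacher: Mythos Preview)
The paper does not prove this statement; it is quoted directly from Hacon--McKernan, so there is no argument in the paper to compare against.

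Your proposal has a genuine gap at the point where you invoke the canonical bundle formula. The formula of Kawamata, Fujino--Mori and Ambro giving $K_Y+\Gamma\sim_{\bQ}g^*(K_{Z'}+B_{Z'}+M_{Z'})$ applies only to \emph{lc-trivial} fibrations, i.e., one needs $(K_Y+\Gamma)|_F\sim_{\bQ}0$ for the general fibre $F$ of $g$. In your situation $K_Y+\Gamma=p^*(K_X+\Delta)+E_Y$ with $-(K_X+\Delta)$ ample, so $(K_Y+\Gamma)|_F$ is anti-big rather than $\bQ$-trivial; indeed the very fact you cite, that the rationally connected fibre $F$ carries no nonzero pluricanonical forms, witnesses $\kappa\big(F,(K_Y+\Gamma)|_F\big)=-\infty$ rather than $0$. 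Hence $K_Y+\Gamma$ has no reason to descend to $Z'$, and the displayed identity simply does not hold. The actual proofs proceed differently: Hacon--McKernan deduce the result from their extension theorems via Shokurov's rational connectedness conjecture, and Zhang's earlier argument (which you mention as an alternative) uses weak positivity of direct images together with a careful intersection-theoretic estimate, not the canonical bundle formula. If you want a self-contained proof along the lines you sketch, that second route is the one to develop, but it is substantially more delicate than what you have written.
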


The following  result is certainly well known to experts. We include a proof for lack of adequate reference.

\begin{lemma}\label{lemma:Fano_lin_equ_and_num_equ}
Let $X$ be klt $\bQ$-Fano variety, and $\sL$ a line bundle on $X$. 
If $c_1(\sL)\equiv 0$, then $\sL\simeq\sO_X$.
In particular, if $\rho(X)=1$, then $\Pic(X) \simeq \bZ$.
\end{lemma}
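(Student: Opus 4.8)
The plan is to show that a numerically trivial line bundle $\sL$ on a klt $\bQ$-Fano variety $X$ has $H^0(X,\sL)\neq 0$ and $H^0(X,\sL^{-1})\neq 0$; then a nonzero section of $\sL$ together with a nonzero section of $\sL^{-1}$ produces a nowhere-vanishing section of $\sO_X$, forcing $\sL\simeq\sO_X$. To obtain the vanishing of higher cohomology that yields these sections, I would invoke Kawamata--Viehweg vanishing in its form for klt pairs. Concretely, since $X$ is klt and $-K_X$ is ample, the pair $(X,0)$ is klt and for the $\bQ$-Cartier divisor $D := -K_X + c_1(\sL)$ we have $D \equiv -K_X$ is ample (numerical triviality of $\sL$ is exactly what is used here). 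Hence $H^i\big(X,\sO_X(K_X + D)\big) = H^i(X,\sL) = 0$ for all $i>0$ by Kawamata--Viehweg vanishing on klt varieties. The same argument applied to $-c_1(\sL)$ in place of $c_1(\sL)$ gives $H^i(X,\sL^{-1}) = 0$ for $i>0$.

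Next I would compute $\chi(X,\sL)$ and $\chi(X,\sL^{-1})$. Since $c_1(\sL)\equiv 0$, asymptotic Riemann--Roch (or the fact that the Euler characteristic is a numerical invariant of the class of the divisor) gives $\chi(X,\sL) = \chi(X,\sO_X) = \chi(X,\sL^{-1})$. Combining with the vanishing above, $h^0(X,\sL) = \chi(X,\sO_X)$ and $h^0(X,\sL^{-1}) = \chi(X,\sO_X)$. By Theorem~\ref{thm:rc}, $X$ is rationally connected, hence $H^i(X,\sO_X)=0$ for $i>0$ and $\chi(X,\sO_X)=1$. Therefore $h^0(X,\sL) = h^0(X,\sL^{-1}) = 1$.

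Finally, choose nonzero sections $s\in H^0(X,\sL)$ and $t\in H^0(X,\sL^{-1})$. Their product $s\otimes t$ is a nonzero global section of $\sL\otimes\sL^{-1}\simeq\sO_X$, hence a nonzero constant; in particular $s$ vanishes nowhere, so the corresponding map $\sO_X\to\sL$ is an isomorphism and $\sL\simeq\sO_X$. The last assertion is then immediate: if $\rho(X)=1$ then $\N^1(X)_{\bQ}$ is one-dimensional, so the kernel of $\Pic(X)\to\N^1(X)$ is torsion; by what we just proved this kernel is trivial, and since $\Pic(X)$ is a finitely generated abelian group of rank $1$ with no torsion it is isomorphic to $\bZ$. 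The main obstacle is making sure the cohomological input is legitimate in the singular setting: one needs the version of Kawamata--Viehweg vanishing valid on klt (not necessarily smooth) varieties, and one must check that $\chi$ really is invariant under numerical equivalence of the twisting class (this follows from asymptotic Riemann--Roch applied to $\sL^{\otimes m}$, whose Hilbert polynomial is constant in $m$ since $c_1(\sL)\equiv 0$); everything else is formal.
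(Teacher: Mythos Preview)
Your proof is correct and follows essentially the same approach as the paper: Kawamata--Viehweg vanishing to kill higher cohomology of $\sL$ and $\sL^{-1}$, numerical invariance of the Euler characteristic to get $h^0(X,\sL)=h^0(X,\sL^{-1})=1$, and then the standard argument with mutually inverse sections. The only cosmetic difference is that the paper passes to a resolution $\pi:Y\to X$ and uses rational singularities plus Grothendieck--Riemann--Roch on $Y$ to justify $\chi(X,\sL)=\chi(X,\sO_X)$, whereas you invoke numerical invariance of $\chi$ directly (and use Theorem~\ref{thm:rc} rather than Kawamata--Viehweg for $h^i(X,\sO_X)=0$, which is a slight detour).
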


\begin{proof}
Let $\pi : Y \to X$ be a resolution of singularities. Recall that
$X$ has rational singularities. The Leray spectral sequence implies that $h^i(X,\sO_X)=h^i(Y,\sO_Y)$ for all $i\ge 0$, and 
$h^i(X,\sL)=h^i(Y,\pi^*\sL)$ for all $i\ge 0$. By the Kawamata-Viehweg vanishing Theorem, 
$h^i(X,\sO_X)=0$ 
and $h^i(X,\sL)=0$ for all $i\ge 1$
(see \cite[Theorem 1.2.5]{kmm}).
 The Grothendieck-Riemann-Roch Theorem and the fact that $c_1(\sL)\equiv 0$ imply that 
$\chi(Y,\pi^*\sL)=\chi(Y,\sO_Y)$.
Thus 
$\chi(Y,\pi^*\sL)=1$, and $h^0(X,\sL)=h^0(Y,\pi^*\sL)=1$.
The same argument shows that $h^0(X,\sL^{\otimes -1})=1$ as well. The claim follows.
\end{proof}

We end this subsection by proving Theorem \ref{thm:ko}.

\begin{proof}[Proof of Theorem \ref{thm:ko}]
The result is clear if $n=1$
So we assume that $n\ge 2$.
Let $\pi : Y \to X$ be a resolution of singularities, with exceptional locus $E$. 
For every  $t\in\bZ$ we have
$$
\begin{array}{cccl}
h^n\big(Y,\pi^*\sL^{\otimes -t}\big) & \simeq  & 
h^0\big(Y,\sO_Y(K_Y)\otimes\pi^*\sL^{\otimes t}\big)
& \text{ by Serre duality}\\
& \le &   h^0\big(Y\setminus E,\sO_Y(K_Y)\otimes\pi^*\sL^{\otimes t}\big) 
& \\
& = & h^0\big(X\setminus \pi(E),\sO_X(K_X)\otimes\sL^{\otimes t}\big)
& \text{ since $Y\setminus E\simeq X\setminus \pi(E)$}\\
& = & h^0\big(X,\sO_X(K_X)\otimes\sL^{\otimes t}\big)
& \text{ since $\codim(\pi(E))\ge 2$}.
\end{array}
$$

Notice that $K_X+t c_1(\sL)\sim_\bQ -\Delta + (t-i) c_1(\sL)$.

Suppose that $i > n$. 
Then, for every $t\in\{1,\ldots, n\}$, we have 
$t-i < 0$, and thus
$h^n\big(Y,\pi^*\sL^{\otimes -t}\big)=h^0\big(X,\sO_X(K_X)\otimes\sL^{\otimes t}\big)=0$. 
Thus, by \cite[Theorem 2.2]{fujita89}, there is a birational morphism $\phi:X\to \bP^n$ such that $\sL\simeq \phi^*\sO_{\bP^n}(1)$. Since $\sL$ is ample, $\phi$ must be an isomorphism, proving (1). 

Suppose that $i = n$. 
Then, for every $t\in\{1,\ldots, n-1\}$, we have  $t-i < 0$, and thus
$h^n\big(Y,\pi^*\sL^{\otimes -t}\big)=h^0\big(X,\sO_X(K_X)\otimes\sL^{\otimes t}\big)=0$.
Moreover, 
$h^n\big(Y,\pi^*\sL^{\otimes -n}\big)=
h^0\big(X,\sO_X(K_X)\otimes\sL^{\otimes n}\big)$
and $K_X+nc_1(\sL)\sim_\bQ -\Delta$.
If 
$h^n\big(Y,\pi^*\sL^{\otimes -n}\big)=0$,
then 
$(X,\sL)\simeq 
(\bP^n,\sO_{\bP^n}(1))$ again 
by \cite[Theorem 2.2]{fujita89}.
Otherwise, $\Delta=0$,
$-K_X\sim_{\bZ} n c_1(\sL)$
and
$h^n\big(Y,\pi^*\sL^{\otimes -n}\big)=1$. 
By \cite[Theorem 2.3]{fujita89},
either $g(X,\sL)=1$, where $g(X,\sL)$ denotes the sectional genus of $(X,\sL)$, 
or there is a birational morphism $\psi:X\to Q_n$ onto a (possibly singular) quadric hypersurface in $\bP^{n+1}$ such that $\sL\simeq \psi^*\sO_{Q_n}(1)$.
In the latter case, $\psi$ must be an isomorphism 
since $\sL$ is ample. Finally note that
$g(X,\sL)=1+\frac{1}{2}(K_X+(n-1)c_1(\sL))\cdot c_1(\sL)^{n-1}
=1-\frac{1}{2}c_1(\sL)^{n}\neq 1$. This proves (2).
\end{proof}

\subsection{Singular del Pezzo varieties}

\begin{defn}[{\cite[Introduction]{fujita90}}] A \emph{del Pezzo variety} is a pair $(X,\sL)$, where $X$ is a (not necessarily normal) projective variety
and $\sL$ is an ample line bundle on $X$, such that:
\begin{enumerate}
\item $X$ is Gorenstein, 
\item $\omega_X\simeq \sL^{\otimes -(\dim(X)-1)}$,
\item $h^i(X,\sL^{\otimes t})=0$ for all $t \in \bZ$ and 
$0 < i < \dim(X)$.
\end{enumerate}

If $X$ is a Gorenstein klt $\bQ$-Fano variety, then, by the 
Kawamata-Viehweg vanishing Theorem
(see \cite[Theorem 1.2.5]{kmm}), condition (3) above is satisfied.
Therefore, $X$ is a (normal) del Pezzo variety if and only if condition (2) holds.
\end{defn}

Our classification of codimension $1$ del Pezzo foliations  in Theorem \ref{thm:classification_del_pezzo}(1)
will rely on the  following results.

\begin{prop}\label{prop:del_pezzo_degree} 
Let $(X,\sL)$ be a normal del Pezzo variety of dimension $n \ge 1$ and degree $d:=\sL^n$.
Suppose that $X$ is klt. Then $d \le 9$.
\end{prop}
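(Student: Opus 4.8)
The plan is to reduce to the smooth case and then either quote or re-derive the classical bound $d\le 9$ for smooth del Pezzo varieties. First I would take a resolution of singularities $\pi:Y\to X$; since $X$ is klt and Gorenstein, it has canonical singularities, so I can write $K_Y = \pi^*K_X + \sum a_i E_i$ with all $a_i\ge 0$. Setting $\sM:=\pi^*\sL$, which is nef and big, and using $\omega_X\simeq \sL^{\otimes -(n-1)}$, one gets $K_Y + (n-1)\sM = \sum a_i E_i \ge 0$, i.e.\ $-K_Y$ is $\bQ$-linearly equivalent to $(n-1)\sM$ minus an effective exceptional divisor. The degree is a birational invariant in the sense that $\sM^n = \sL^n = d$, so it suffices to bound $\sM^n$ on $Y$.

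Next I would run a case analysis on $n$. For $n=1$ the statement $d\le 9$ is trivial (in fact $d=2$, a double cover of $\bP^1$, or rather $(X,\sL)$ with $\omega_X\simeq\sO_X$ and $\sL$ ample forces $d=\sL^1$ arbitrary — here one should be careful: the interesting range is $n\ge 2$, and the del Pezzo normalization conditions pin things down). For $n=2$, $(X,\sL)$ is a (possibly singular) del Pezzo surface of degree $d$ in the classical anticanonical sense, and the bound $d\le 9$ is classical; with klt (hence canonical, Gorenstein) singularities this is exactly the statement that a Gorenstein canonical del Pezzo surface has degree $\le 9$, which follows from the smooth case applied to the minimal resolution together with $K_Y^2 \le K_X^2 = d$ being impossible to decrease — more precisely $K_Y^2 = K_X^2$ only when $\pi$ is an isomorphism, and in general $K_Y^2 \le 9$ by Noether's formula / the classification of smooth rational surfaces, hence $d = K_X^2 = K_Y^2 + (\text{number of }(-1)\text{-curves contracted extra})\le 9$. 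The cleanest route is: on the minimal resolution $Y$ of the surface, $-K_Y$ is nef (as $-K_X$ is ample and $\pi^*K_X = K_Y$ for canonical surface singularities, which are ADE), so $Y$ is a weak del Pezzo surface, and weak del Pezzo surfaces have $K_Y^2\le 9$; then $d = K_X^2 = K_Y^2 \le 9$.

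For $n\ge 3$ I would use Fujita's structure theory of del Pezzo varieties (\cite{fujita90}): a hyperplane section argument. By the Bertini-type results available in Fujita's theory, a general member $X'\in |\sL|$, together with $\sL':=\sL_{|X'}$, is again a del Pezzo variety of dimension $n-1$ and the same degree $d = (\sL')^{n-1}$, and one checks it remains normal and klt (a general hyperplane section of a klt variety through the adjunction formula is again klt, since $\omega_{X'}\simeq\sL'^{\otimes -(n-2)}$ keeps it $\bQ$-Gorenstein with the right discrepancies). Iterating down to dimension $2$ reduces everything to the surface case handled above, giving $d\le 9$.

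The main obstacle I expect is the inductive step via hyperplane sections: one must ensure that a general $X'\in|\sL|$ is \emph{irreducible, reduced, normal, and klt}, and that the del Pezzo conditions (1)--(3) descend. Irreducibility/normality of a general hyperplane section in the Gorenstein setting follows from Seidenberg-type Bertini theorems once $\sL$ is very ample (or from Fujita's arguments if only ample), and the klt property descends by inversion of adjunction / the fact that $(X, X')$ klt near $X'$ implies $X'$ klt. Condition (3) — the vanishing $h^i(X',\sL'^{\otimes t})=0$ for $0<i<n-1$ — follows from the corresponding vanishing on $X$ via the ideal sheaf sequence $0\to\sO_X((t-1)\sL)\to\sO_X(t\sL)\to\sO_{X'}(t\sL')\to 0$ and the long exact cohomology sequence. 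Alternatively, one sidesteps the Bertini issues entirely by noting that a klt Gorenstein del Pezzo variety is rationally connected (Theorem~\ref{thm:rc}, since $-K_X = (n-1)\sL$ is ample) and invoking Fujita's classification directly, which lists all del Pezzo varieties and exhibits $d\le 9$ by inspection; but the hyperplane-section reduction to surfaces is the more self-contained argument. \cqfd
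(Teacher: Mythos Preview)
Your approach is correct (for $n\ge 2$) but genuinely different from the paper's. A few remarks before the comparison: your opening paragraph about passing to a resolution $\pi:Y\to X$ is never used afterward and can be deleted --- the actual content of your argument is the surface case plus the hyperplane-section induction. Your confusion at $n=1$ is justified: the condition $\omega_X\simeq\sO_X$ makes $X$ an elliptic curve and $d=\deg\sL$ is unbounded, so the statement really needs $n\ge 2$. For $n=2$ your argument is fine: Gorenstein klt means Du Val, the minimal resolution is crepant, so $Y$ is a weak del Pezzo surface with $K_Y^2=K_X^2=d\le 9$. For the inductive step you need that a general $X'\in|\sL|$ is again normal, klt, Gorenstein, and del Pezzo of the same degree; this works once $d\ge 3$ (so $\sL$ is very ample by Fujita), using Seidenberg--Bertini for normality, Bertini for log pairs to get $(X,X')$ plt and hence $X'$ klt by adjunction (the different vanishes since $X'$ is Cartier), and the ideal-sheaf sequence for condition~(3), exactly as you outline.

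The paper instead argues as follows: assume $d\ge 3$ so $\sL$ is very ample, embed $X\subset\bP(H^0(X,\sL)^*)$, and split into two cases. If $X$ is \emph{not} a cone, Fujita's classification of del Pezzo varieties that are not cones gives $d\le 9$ directly. If $X$ \emph{is} a cone, pass to the base $W$ (a general linear section), observe $W$ is again del Pezzo of the same degree and not a cone, and apply Fujita to $W$; here the klt hypothesis is used exactly once, to rule out $\dim W=1$ (which would make $W$ an elliptic curve and $X$ a non-klt cone over it). So the paper's route is shorter and leans entirely on citing Fujita's classification, invoking klt only at one pinch point, whereas your route is more self-contained --- bottoming out at the classical surface bound rather than a black-box classification --- at the price of the iterated Bertini/adjunction machinery.
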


\begin{proof} We may assume $d\ge 3$, in which case $\sL$ is very ample by \cite[Corollary 1.5]{fujita90}.

Suppose that $i_X\neq n-1$. Then
$(X,\sL)\simeq (\bP^3,\sO_{\bP^3}(2))$ by Theorem \ref{thm:ko}, and so $d=8$.   

If $X\subset \bP(H^0(X,\sL)^*)$ is not a cone, then $d\le 9$ by 
\cite{fujita1}, \cite{fujita2}, \cite{fujita3} and
\cite[2.9]{fujita86}
(see also \cite{fujita_classification}).

From now on we assume that 
$X\subset \bP(H^0(X,\sL)^*)$ is a (singular) cone over a normal projective variety $W$, with vertex a linear subspace $P$ of $\bP(H^0(X,\sL)^*)$.
We may assume that  $W$ is not a cone. 
Notice that $W$ is identified with a global complete intersection of general members of $|\sL|$. 
Let $\sM$ be the line bundle on $W$ induced by $\sL$.  
Then $W$ is Gorenstein and klt, 
$\sM^{\dim(W)}=d$, 
and $i_W=k(i_X-r)=k(\dim(W)-1)$, where $r=\dim(X)-\dim(W)$ and $k$ is a positive integer. 
If $\dim(W)=1$, then $W$ is a smooth curve of genus one, contradicting the fact that $X$ is klt. So $\dim(W)\ge 2$, and
$W$ is a normal del Pezzo variety of dimension $\ge 2$. 
Since $W$ is not a cone, we must have $d=\sM^{\dim(W)}\le 9$ as above, completing
the proof of the proposition.
\end{proof}

\begin{lemma}\label{lemma:vanishing_del_pezzo}
Let $(X,\sL)$ be a normal del Pezzo variety of dimension $n\ge 2$ and degree $\sL^n\le 2$. Then $h^0(X,\Omega_X^{[1]}\otimes\sL)=0$.
\end{lemma}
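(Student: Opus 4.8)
The plan is to reduce the vanishing of $h^0(X,\Omega_X^{[1]}\otimes\sL)$ to a statement on a resolution and then exploit the very explicit geometry of normal del Pezzo varieties of small degree. First I would take a log resolution $\pi:Y\to X$ with exceptional divisor $E$, and note that since $X$ is klt it has rational singularities; moreover reflexive differential forms extend over the singular locus (this is the key general fact, cf.\ the extension theorems for klt pairs), so a section of $\Omega_X^{[1]}\otimes\sL$ over $X$ corresponds to a section of $\Omega_Y^1\otimes\pi^*\sL$ over $Y$ — or, more safely, it is enough to bound $h^0(Y,\Omega_Y^1(\log E)\otimes\pi^*\sL)$. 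So the statement follows once we know $H^0(Y,\Omega_Y^1\otimes\pi^*\sL)=0$ (or the logarithmic variant), which reduces the problem to a smooth birational model together with the numerical positivity data $\pi^*\sL$.

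Second, I would use the classification of normal del Pezzo varieties $(X,\sL)$ with $\sL^n\le 2$. By definition $\omega_X\simeq\sL^{\otimes -(n-1)}$, so $X$ is a Gorenstein klt $\bQ$-Fano variety of index $i_X\ge n-1$; the small-degree hypothesis pins it down. Degree $1$ forces $(X,\sL)$ to be a weighted hypersurface of degree $6$ in $\bP(1,\ldots,1,2,3)$ (a sextic double cover realization), and degree $2$ forces $X$ to be a double cover of $\bP^n$ branched along a quartic, i.e.\ a hypersurface of degree $4$ in $\bP(1,\ldots,1,2)$ with $\sL$ pulled back from $\sO_{\bP^n}(1)$. (The cone cases from Proposition~\ref{prop:del_pezzo_degree} reduce to lower-dimensional del Pezzo varieties of the same degree together with a $\bP$-bundle direction, which I would handle by the same cover description applied to the base $W$ and a direct check that the cone direction contributes nothing to $H^0$ of the twisted cotangent sheaf.)

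Third, with such an explicit model I would run a concrete cohomology computation. For the double cover $f:X\to\bP^n$ branched over a smooth (or klt) divisor, there is the standard exact sequence relating $\Omega_X^1$ to $f^*\Omega_{\bP^n}^1$ and the ramification, and $f_*\sO_X\simeq\sO_{\bP^n}\oplus\sO_{\bP^n}(-k)$ with $k=2$ (degree $2$) or the appropriate weighted analogue (degree $1$). Pushing $\Omega_X^{[1]}\otimes\sL$ down to $\bP^n$ and twisting by $\sO_{\bP^n}(1)$, one is left with a direct sum of sheaves of the form $\Omega_{\bP^n}^1(a)$ and $\sO_{\bP^n}(b)$ with $a,b\le 0$, all of which have vanishing $H^0$ by Bott's formula; the pieces coming from the branch divisor are handled similarly. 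The del Pezzo vanishing condition (3) in the definition is exactly what guarantees the intermediate cohomology groups that appear in the push-forward spectral sequence vanish, so no spurious contributions survive.

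The main obstacle I anticipate is the first step done cleanly: one must be careful that $\Omega_X^{[1]}$ on a klt (not merely canonical, not smooth) variety really does control sections via a resolution, i.e.\ invoking the reflexive-form extension machinery correctly, and that the cone cases in Proposition~\ref{prop:del_pezzo_degree} do not hide a del Pezzo variety one has not classified (e.g.\ making sure the base $W$ of the cone is again covered by the degree $\le 2$ classification, using $\sM^{\dim W}=\sL^n\le 2$). Once the model is in hand, the cohomology vanishing is a routine Bott-formula computation on $\bP^n$, so I do not expect difficulty there; I would present that part briskly.
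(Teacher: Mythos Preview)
Your plan for degree $2$ is essentially the paper's: both use that $|\sL|$ realises $X$ as a double cover $f:X\to\bP^n$ branched along a quartic $B$, and compute via the projection formula. However, your description of the push-forward is not quite right: one has $f_*\Omega_X^{[1]}\simeq \Omega_{\bP^n}^1\oplus \Omega_{\bP^n}^{[1]}(\log B)\otimes\sO_{\bP^n}(-2)$ (a formula of Viehweg), so after twisting by $\sO_{\bP^n}(1)$ the second summand is $\Omega_{\bP^n}^{[1]}(\log B)(-1)$, which has vanishing $H^0$ but is not of the shape $\Omega^1(a)$ or $\sO(b)$ you describe. This is a minor detail, but it is exactly the formula one needs.

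Your Step~1 (passing to a resolution via the reflexive extension theorem) is unnecessary and does not actually connect with your Step~3, where you work directly with the cover $X\to\bP^n$ rather than with a resolution $Y$. The paper never passes to a resolution: the push-forward formula above already handles the reflexive sheaf on the possibly singular $X$. Note also that the lemma does not assume $X$ klt, so the extension theorem is not available in general; fortunately it is also not needed. Similarly, the cone case you worry about does not arise: Fujita's classification directly gives the double-cover structure for $\sL^n\le 2$.

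For degree $1$ the approaches genuinely diverge. The paper does not use the weighted-hypersurface model in $\bP(1^n,2,3)$. Instead it uses Fujita's description: the blow-up $\pi:Y\to X$ at a smooth point is a double cover $f:Y\to Z$ of the \emph{smooth} scroll $Z=\bP_{\bP^{n-1}}(\sO(2)\oplus\sO)$, branched along an explicit reduced divisor $D=B+S$. One then shows $h^0(X,\Omega_X^{[1]}\otimes\sL)=h^0(Y,\Omega_Y^{[1]}\otimes\pi^*\sL\otimes\sO_Y(E))$ via the blow-up exact sequence, pushes forward along $f$ using the same Viehweg formula, and is left with two concrete vanishings on $Z$, each checked by hand. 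Your alternative (double cover of $\bP(1^n,2)$, then Bott) is plausible but would force you to deal with the singular vertex of the weighted projective space, which is precisely what the paper's model avoids by working over a smooth base throughout. This is where the real work in the lemma lies, and I would not call it a routine Bott computation.
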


To prove  Lemma~\ref{lemma:vanishing_del_pezzo}, we will use the following description of  reflexive forms 
on double covers.

\begin{say}
Let $X$ be a smooth projective variety, and $D$ an effective reduced divisor on $X$ such that $\sO_X(D)\simeq \sR^{\otimes 2}$
for some line bundle $\sR$ on $X$.
Let $f:Y\to X$ be the double cover of $X$ ramified over $D$.
Then $Y$ is normal, and $f_*\Omega^{[1]}_Y$ is a reflexive sheaf by  \cite[Corollary 1.7]{hartshorne80}.
By \cite[Lemme 1.9]{viehweg}, 
$f_*\Omega^{[1]}_Y\simeq \Omega^{1}_{X}
 \oplus \Omega_{X}^{[1]}(\textup{log }D)\otimes\sR^{-1}$.
\end{say}

\begin{proof}[{Proof of Lemma~\ref{lemma:vanishing_del_pezzo}}]

Suppose that $\sL^n=2$. By \cite[Corollary 6.13]{fujita_classification},
the linear system $|\sL|$ induces a double cover $\pi : X \to \bP^n$ ramified over a quartic hypersurface $B\subset \bP^n$.
Then 
 $\pi_*\Omega^{[1]}_X\simeq \Omega^{1}_{\bP^n}
 \oplus \Omega_{\bP^n}^{[1]}(\textup{log }B)\otimes\sO_{\bP^n}(-2)$.
By the projection formula,
\begin{multline*}
h^0(X,\Omega_X^{[1]}\otimes\sL) = h^0(\bP^n,(\pi_*\Omega_X^{[1]})\otimes\sO_{\bP^n}(1)) 
= h^0(\bP^n,\Omega^{1}_{\bP^n}(1))+
h^0(\bP^n,\Omega_{\bP^n}^{[1]}(\textup{log }B)\otimes\sO_{\bP^n}(-1))=0.
\end{multline*}

\medskip

Suppose that $\sL^n=1$. By  \cite[Corollary 6.13]{fujita_classification}, $(X,\sL)$ admits the following description.
Set $Z:=\bP_{\bP^{n-1}}(\sO_{\bP^{n-1}}(2)\oplus\sO_{\bP^{n-1}})$, write $p:Z\to \bP^{n-1}$ for the natural morphism, and
$\sO_Z(1)$ for the tautological line bundle. 
Let $B\in |\sO_Z(3)|$ be a reduced divisor, $S\simeq \bP^{n-1}$ the section of $p$ corresponding to the
surjection $\sO_{\bP^{n-1}}(2)\oplus\sO_{\bP^{n-1}} \twoheadrightarrow \sO_{\bP^{n-1}}$, and set $D:=B+S$.
Observe that $S\cap B=\emptyset$, and $\sO_Z(D)\simeq \big( \sO_{Z}(2)\otimes p^*\sO_{\bP^{n-1}}(-1) \big)^{\otimes 2}$.
Let $f:Y\to Z$ be the double cover of $Z$ ramified over $D$, and set $E=f^{-1}(S)\simeq \bP^{n-1}$.
Then, for some choice of $B$ as above,  there is a birational morphism $\pi:Y\to X$ realizing $Y$ as the blowup of $X$ at a smooth point $x\in X$,
with exceptional divisor $E$. Moreover, the composite morphism $\rho : Y \to Z \to \bP^{n-1}$ is induced by the 
 linear system  $|\pi^*\sL\otimes\sO_Y(-E)|$.

We have
$$
\begin{array}{cccl}
h^0(X,\Omega^{[1]}_X\otimes\sL)  & =  & h^0(X,\Omega^{[1]}_X\otimes\sL\otimes\pi_*\sO_Y(E)) & \text{ since $E$ is exceptional}\\
						      & = & h^0(Y,\pi^*\Omega^{[1]}_X\otimes\pi^*\sL\otimes\sO_Y(E)) & \text{ by the projection formula}.
\end{array}
$$
The short exact sequence
$$0 \to \pi^*\Omega^{[1]}_X \to \Omega^{[1]}_Y \to \Omega^1_E\to 0$$ 
and the vanishing of $H^0(Y,\Omega^1_E\otimes\pi^*\sL\otimes\sO_Y(E))\simeq  
H^0(E,\Omega^1_E\otimes\sO_E(E))\simeq 
H^0(\bP^{n-1},\Omega^1_{\bP^{n-1}}\otimes\sO_{\bP^{n-1}}(-1))$
yield 
$$
h^0(Y,\pi^*\Omega^{[1]}_X\otimes\pi^*\sL\otimes\sO_Y(E))=h^0(Y,\Omega^{[1]}_Y\otimes\pi^*\sL\otimes\sO_Y(E)).
$$
Notice that $\pi^*\sL\otimes\sO_Y(E)\simeq f^*\sO_Z(1)\otimes \rho^*\sO_{\bP^{n-1}}(-1)$. Thus 
$$
\begin{array}{cccl}
h^0(Y,\Omega^{[1]}_Y\otimes\pi^*\sL\otimes\sO_Y(E)) & = & 
h^0(Y,\Omega^{[1]}_Y\otimes f^*\sO_Z(1)\otimes \rho^*\sO_{\bP^{n-1}}(-1))
& \ \\
& = & h^0(Z,f_*\Omega^{[1]}_Y\otimes \sO_Z(1)\otimes p^*\sO_{\bP^{n-1}}(-1))  
& \text{ by the projection formula}.\\
\end{array}
$$
Since 
$f_*\Omega^{[1]}_Y\simeq \Omega^{1}_{Z}
 \oplus \Omega_{Z}^{[1]}\big(\textup{log }(S+B)\big)\otimes\sO_{Z}(-2)\otimes p^*\sO_{\bP^{n-1}}(1)$, we conclude that 
 $$
 h^0(X,\Omega^{[1]}_X\otimes\sL)  \ = \  h^0(Z,  \Omega^{1}_{Z} \otimes \sO_Z(1)\otimes p^*\sO_{\bP^{n-1}}(-1)  ) + 
 h^0(Z, \Omega_{Z}^{[1]}\big(\textup{log }(S+B)\big)\otimes\sO_{Z}(-1)).
 $$
By restricting to a section of $p$ corresponding to the
surjection $\sO_{\bP^{n-1}}(2)\oplus\sO_{\bP^{n-1}} \twoheadrightarrow \sO_{\bP^{n-1}}(2)$,
we see that  $h^0(Z,\Omega^{1}_{Z}\otimes \sO_Z(1)\otimes p^*\sO_{\bP^{n-1}}(-1))=0$.
To see the vanishing of the second summand above, 
notice that $\Omega_{Z}^{[1]}\big(\textup{log }(S+B)\big)\otimes\sO_{Z}(-1)\subset 
\Omega_{Z}^{[1]}(\textup{log }B)\otimes \sO_Z(S) \otimes\sO_{Z}(-1)$.
On the other hand, $\sO_Z(S)\simeq \sO_Z(1)\otimes p^*\sO_{\bP^{n-1}}(-2)$, 
and so the desired vanishing follows from 
$$
h^0(Z,\Omega_{Z}^{[1]}(\textup{log }B)\otimes p^*\sO_{\bP^{n-1}}(-2))
=0.
$$
This completes the proof.
\end{proof}

\subsection{A vanishing result} \

Rationally chain connected varieties with mild singularities do not carry nonzero reflexive forms (see \cite{greb_kebekus_kovacs_peternell10}).
We show that, under suitable conditions, the same holds for twisted reflexive forms.

The following observation will be used throughout the paper: 
if $\sG$ is a reflexive sheaf of $\sO_X$-modules, and 
$i : U \hookrightarrow X$ is a dense open subset such that $\codim (X \setminus U )\ge 2$, then $\sG\simeq i_* \sG_{|U}$. 
In particular, any section of $\sG$ on $U$ extends to a section of $\sG$ on $X$.

\begin{lemma}\label{lemma:vanishing_weak_fano}
Let $(X,\Delta)$ be a
klt pair. Suppose 
that either $-(K_X+\Delta)$ is nef and big, or 
$K_X+\Delta\sim_{\bQ} 0$ and $\Delta$ is big.
Let $D$ be an integral divisor on $X$ such that $D\sim_{\bQ} 0$.
Then $h^0(X,\Omega_X^{q}[\otimes]\sO_X(-D))=0$
for every positive integer $q$.
\end{lemma}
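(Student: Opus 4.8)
The plan is to reduce the vanishing of twisted reflexive forms to the known vanishing of untwisted reflexive forms on klt pairs of log Calabi-Yau or log Fano type, via passing to a cyclic cover.

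\textbf{Setting up the cover.} First I would use that $D \sim_\bQ 0$ to write $mD = \div(f)$ for some positive integer $m$ and some rational function $f \in k(X)^*$. This data defines a cyclic cover $\pi : \tilde X \to X$ of degree $m$, obtained by normalizing $X$ in the extension $k(X)(f^{1/m})$; concretely $\tilde X = \Spec_X \bigoplus_{j=0}^{m-1} \sO_X(jD)$ with the algebra structure coming from $f$. The morphism $\pi$ is finite, and since $D$ is a genuine integral Cartier-in-codimension-one divisor with $D \sim_\bQ 0$, the branch locus is contained in the (codimension $\ge 2$) support of the non-Cartier behaviour together with the zero/polar set of $f$; away from a codimension $2$ set $\pi$ is \'etale. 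I would then want to know that $\tilde X$ is again klt — this is the standard fact that a cyclic cover associated to a $\bQ$-trivial divisor preserves klt singularities (one pulls back the pair $(X,\Delta)$ to a pair $(\tilde X, \tilde\Delta := \pi^*\Delta)$ and checks discrepancies, using that $\pi$ is \'etale in codimension one so that $K_{\tilde X} = \pi^* K_X$ and $\pi^*(K_X+\Delta) = K_{\tilde X}+\tilde\Delta$). Moreover $-(K_{\tilde X}+\tilde\Delta) = \pi^*(-(K_X+\Delta))$ is nef and big if $-(K_X+\Delta)$ is (finite pullback preserves nef and big), and similarly $K_{\tilde X}+\tilde\Delta \sim_\bQ 0$ with $\tilde\Delta = \pi^*\Delta$ big in the second case. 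So $(\tilde X,\tilde\Delta)$ satisfies the same hypotheses as $(X,\Delta)$.

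\textbf{Descending sections.} The key point of the cyclic cover construction is the splitting of pushforwards in codimension one: on the open locus $U \subset X$ where $\pi$ is \'etale one has $\pi_* \Omega^q_{\tilde X}|_U \simeq \bigoplus_{j=0}^{m-1} \Omega^q_U \otimes \sO_U(-jD)$, the eigensheaf decomposition under the Galois action of $\bZ/m$. Taking reflexive hulls over all of $X$ (both sides are reflexive, and they agree on $U$ whose complement has codimension $\ge 2$) gives
$$
\pi_{[*]}\Omega^{[q]}_{\tilde X} \;\simeq\; \bigoplus_{j=0}^{m-1} \Omega^{[q]}_X[\otimes]\sO_X(-jD).
$$
In particular the summand $j=1$ is $\Omega^{[q]}_X[\otimes]\sO_X(-D)$, so
$$
h^0\big(X,\Omega^{[q]}_X[\otimes]\sO_X(-D)\big) \;\le\; h^0\big(X,\pi_{[*]}\Omega^{[q]}_{\tilde X}\big) \;=\; h^0\big(\tilde X,\Omega^{[q]}_{\tilde X}\big).
$$
(Here I am implicitly using the notation of the paper: $\Omega_X^q[\otimes]\sO_X(-D)$ is the reflexive hull $(\Omega^q_X\otimes\sO_X(-D))^{**}$, which agrees with $\Omega^{[q]}_X[\otimes]\sO_X(-D)$ since both are reflexive and agree on the smooth locus.)

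\textbf{Killing the reflexive forms upstairs.} Finally I would invoke the vanishing of reflexive differential forms on mildly singular rationally connected varieties. By Theorem~\ref{thm:rc}, since $(\tilde X,\tilde\Delta)$ is klt with $-(K_{\tilde X}+\tilde\Delta)$ nef and big, $\tilde X$ is rationally connected (in the second case $K_{\tilde X}+\tilde\Delta\sim_\bQ 0$ with $\tilde\Delta$ big forces $-K_{\tilde X}$ big, and one still gets rational connectedness — or one argues directly that $\tilde X$ is klt of Fano type). Then by the extension theorem of Greb--Kebekus--Kov\'acs--Peternell for reflexive forms on klt spaces (cited in the excerpt as \cite{greb_kebekus_kovacs_peternell10}), $h^0(\tilde X, \Omega^{[q]}_{\tilde X}) = 0$ for every $q \ge 1$, because a rationally connected klt variety carries no nonzero reflexive $q$-forms. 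Combining with the displayed inequality gives $h^0(X,\Omega^q_X[\otimes]\sO_X(-D)) = 0$, as desired.

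\textbf{Main obstacle.} The routine but delicate point is the codimension-one bookkeeping: I must make sure that $\pi$ is \'etale in codimension one (so that the eigensheaf splitting, the discrepancy comparison $K_{\tilde X} = \pi^*K_X$, and the klt property all go through cleanly) — this is where it matters that $D$ is an \emph{integral} divisor with $D\sim_\bQ 0$, rather than a genuinely fractional or ample class. Once the cover is \'etale in codimension one, everything else is a formal consequence of reflexivity and the cited vanishing theorem.
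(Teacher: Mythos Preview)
Your proposal is correct and follows essentially the same route as the paper: pass to the cyclic cover trivializing $D$, observe it is \'etale in codimension one so $(\tilde X,\tilde\Delta)$ is again klt with the same positivity, and then kill reflexive forms upstairs via rational connectedness (Theorem~\ref{thm:rc}) together with the Greb--Kebekus--Kov\'acs--Peternell vanishing. The only cosmetic differences are that the paper pulls back a hypothetical nonzero section directly rather than writing out the eigensheaf decomposition, and it handles the second case ($K_X+\Delta\sim_\bQ 0$, $\Delta$ big) by perturbing on $X$ first---writing $\Delta\sim_\bQ A+N$ with $(X,N)$ klt and $-(K_X+N)\sim_\bQ A$ ample---so as to reduce to the nef-and-big case before taking the cover; your sketch instead covers first and then invokes ``Fano type'', which amounts to the same perturbation carried out on $\tilde X$.
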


\begin{proof}
Let $r$ be the smallest positive integer such that 
$r\,D \sim_\bZ 0$, and let $\tau:X' \to X$ be the corresponding cyclic cover (see \cite[Definition 2.52, Lemma 2.53]{kollar_mori}). 
Then $\tau$ is \'etale in codimension 1, and $\tau^{[*]}\sO_X(D)\simeq \sO_{X'}$.
Set $\Delta':=\tau^*\Delta$, and notice that $\tau^* K_X=K_{X'}$.
By \cite[Proposition 5.20]{kollar_mori}, $(X',\Delta')$ is klt.

Suppose that $-(K_X+\Delta)$ is nef and big.
Then so is  $-(K_{X'}+\Delta')\sim_{\bQ}- \tau^*(K_X+\Delta)$.
By Theorem~\ref{thm:rc}, $X'$ is rationally connected, and hence 
$h^0(X',\Omega_{X'}^{[q]})=0$ for any positive integer $q$ by \cite[Theorem 5.1]{greb_kebekus_kovacs_peternell10}.
On the other hand, 
any nonzero section of $\Omega_X^{q}[\otimes]\sO_X(-D)$ induces a nonzero section of $\Omega_{X'}^{[q]}$.
So we conclude that $h^0(X,\Omega_X^{q}[\otimes]\sO_X(-D))=0$.

Suppose now that $K_X+\Delta\sim_{\bQ} 0$ and $\Delta$ is big.
Then there exist ample $\bQ$-divisor $A$  and effective $\bQ$-divisor $N$ on $X$ such that 
$\Delta\sim_{\bQ}A+N$, and $(X,N)$ is klt.
Then $-(K_X+N)\sim_{\bQ} A$ is ample, and the claim follows from the previous case.
\end{proof}

We are ready to state and prove our vanishing result.

\begin{thm}\label{thm:main}
Let $(X,\Delta)$ be a klt pair with $\Delta$ big and $K_X+\Delta$ pseudo-effective.
Suppose that 
there exists an integral divisor $D$ on $X$ such that
$K_X+\Delta\sim_{\bQ}D$. Then 
$h^0(X,\Omega_X^{q}[\otimes]\sO_X(-D))=0$ 
for every positive integer $q$.
\end{thm}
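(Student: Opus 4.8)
The plan is to reduce the statement to Lemma~\ref{lemma:vanishing_weak_fano} by running a minimal model program, and to settle the pieces that are left by Bogomolov's theorem on rank~$1$ subsheaves of reflexive differential forms.

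Set $n=\dim X$. One may assume $1\le q\le n-1$: indeed $\Omega_X^{[q]}=0$ for $q>n$, and for $q=n$ one has $\Omega_X^{[n]}[\otimes]\sO_X(-D)\simeq\sO_X(K_X-D)$ with $K_X-D\sim_\bQ-\Delta$, which is not $\bQ$-effective since $\Delta$ is big and nonzero. Now run a $(K_X+\Delta)$-minimal model program; the boundary being big it terminates, and $K_X+\Delta$ being pseudo-effective it terminates with a minimal model $\phi:X\map X'$ on which $K_{X'}+\Delta'$ is nef, where $\Delta'=\phi_*\Delta$. As $\Delta'$ is still big, $K_{X'}+\Delta'$ is semiample; let $g:X'\to Z$ be the associated contraction, so $K_{X'}+\Delta'\sim_\bQ g^*A$ for an ample $\bQ$-divisor $A$ on $Z$, and put $D'=\phi_*D$, an integral divisor with $D'\sim_\bQ K_{X'}+\Delta'$. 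I claim a nonzero section of $\Omega_X^{q}[\otimes]\sO_X(-D)$ produces a nonzero section of $\Omega_{X'}^{[q]}[\otimes]\sO_{X'}(-D')$: each step of the program is an isomorphism in codimension~$1$ away from a contracted divisor $E$, and on the source of a divisorial contraction the sign of the $(K_X+\Delta)$-program yields $D\sim_\bQ\phi^*D'+cE$ with $c>0$, so that $\Omega^{[q]}[\otimes]\sO(-D)$ is a subsheaf of $\Omega^{[q]}[\otimes]\phi^{[*]}\sO(-D')$ and sections push forward (flips are trivial). Hence it is enough to prove $h^0\big(X',\Omega_{X'}^{[q]}[\otimes]\sO_{X'}(-D')\big)=0$.

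Now split according to $\dim Z$. If $\dim Z=0$ then $K_{X'}+\Delta'\sim_\bQ 0$ and $D'\sim_\bQ 0$, so the vanishing is Lemma~\ref{lemma:vanishing_weak_fano}. If $\dim Z=n$ then $g$ is birational, $D'\sim_\bQ g^*A$ is big on $X'$, and a nonzero section would exhibit $\sO_{X'}(D')$ as a rank~$1$ subsheaf of $\Omega_{X'}^{[q]}$ of Kodaira dimension $n>q$, contradicting Bogomolov's theorem (applied on a resolution of $X'$). Assume finally $0<\dim Z<n$ and let $\omega'$ be a section of $\Omega_{X'}^{[q]}[\otimes]\sO_{X'}(-D')$. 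Restricting to a general fiber $F$ of $g$, the pair $(F,\Delta'_{|F})$ is klt with $K_F+\Delta'_{|F}\sim_\bQ 0$, $\Delta'_{|F}$ is big (the ample summand of $\Delta'$ restricts to something ample, the effective summand to something effective) and $D'_{|F}\sim_\bQ 0$; so $h^0\big(F,\Omega_F^{[q]}[\otimes]\sO_F(-D'_{|F})\big)=0$ by Lemma~\ref{lemma:vanishing_weak_fano}, and $\omega'_{|F}=0$. Using the filtration of $\Omega_{X'}^{[q]}$ with graded pieces $g^{[*]}\Omega_Z^{[k]}[\otimes]\Omega_{X'/Z}^{[q-k]}$ and pushing forward by $g$---the pieces with $q-k\ge 1$ push to sheaves of rank~$0$ by the fiberwise vanishing, hence to $0$---one finds that $\omega'$ descends to a nonzero section on $Z$ of a sheaf agreeing in codimension~$1$ with $\Omega_Z^{[q]}[\otimes]\sO_Z(-D_Z)$, for some integral divisor $D_Z$ with $D_Z\sim_\bQ A+E$, $E$ an effective $\bQ$-divisor (in particular $D_Z$ big). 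If $q>\dim Z$ this is $0$; if $q<\dim Z$ one contradicts Bogomolov's theorem once more; and if $q=\dim Z$ then $\Omega_Z^{[q]}\simeq\sO_Z(K_Z)$, the section lies in $H^0\big(Z,\sO_Z(K_Z-D_Z)\big)$, and the canonical bundle formula $A\sim_\bQ K_Z+B_Z+M_Z$ (with $B_Z$ effective, $M_Z$ nef) together with the fact that bigness of $\Delta'$ prevents $B_Z+M_Z\sim_\bQ 0$ shows $K_Z-D_Z$ is not $\bQ$-effective, so the section vanishes.

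The hard part is the fibered case $0<\dim Z<n$: one must make rigorous the descent along the relative-differentials filtration (the pushforward/rank computation, and the identification of $g_*\sO_{X'}(-D')$ as a sheaf whose double dual is controlled by $A$) and, in the endgame $q=\dim Z$, rule out $B_Z+M_Z\sim_\bQ 0$---this is exactly where the hypothesis that $\Delta$ is big is used, via the fact that $g$ is a relatively Fano-type contraction. A lesser point is the invariance of the vanishing under the birational map $\phi$ and the reduction of Bogomolov's theorem to the singular setting.
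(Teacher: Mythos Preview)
Your opening reductions---handling $q=n$, running the $(K_X+\Delta)$-MMP to a minimal model, and invoking Lemma~\ref{lemma:vanishing_weak_fano} when $\kappa=0$---match the paper's proof exactly. (The paper transfers the section from $X$ to $Y$ more simply, by noting that $\phi^{-1}$ contracts no divisors, so the map of reflexive sheaves extends across codimension~$2$; your step-by-step discrepancy argument is correct but unnecessary.)

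Where you diverge is the case $\kappa\ge 1$, and here the paper does something quite different. Rather than attempting to descend the section to the base $Z$ of the Iitaka fibration, the paper \emph{dualizes}: the inclusion $\sO_Y(D_Y)\hookrightarrow\Omega_Y^{[q]}$ becomes a map $\eta_Y:\Omega_Y^{[n-q]}\to\sO_Y(-\Gamma_Y)$ where $\Gamma_Y\sim_\bQ\Delta_Y$. The key step is to show that $\eta_Y$ factors through $\Omega_{Y/W}^{[n-q]}$; this is done by a Bogomolov--Sommese type argument (pass to a resolution, take $m$-th roots of sections of $|mD_Y|$, and use closedness of holomorphic forms). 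Fiberwise vanishing then forces $q=\kappa$, and the contradiction is obtained not on the base but by restricting over a general complete-intersection \emph{curve} $C\subset W$: after semistable reduction one gets $-(K_{Z/B}+\Psi)\sim_\bQ\Delta_Y|_Z$ with $\Delta_Y|_Z$ big, contradicting a positivity result for relative canonical bundles (\cite[Theorem 3.1]{adk08}).

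Your descent approach is conceptually reasonable but, as you yourself flag, the hard part is genuinely hard. Two steps are not justified and do not obviously go through. First, ``pushforward of rank $0$, hence $0$'' is false as stated: torsion sheaves can have sections, and the filtration only lives on the locus where $g$ is smooth---you need to argue that the image of $\omega'$ in each graded piece $g^*\Omega_Z^k\otimes\Omega_{X'/Z}^{q-k}(-D')$ is zero, which uses torsion-freeness of that sheaf (on the smooth locus) plus reflexivity to pass to all of $X'$; this can be made to work, but requires care. Second, and more seriously, the identification of $g_*\sO_{X'}(-D')$ with $\sO_Z(-D_Z)$ for an integral $D_Z$ satisfying $D_Z\sim_\bQ A+E$ is not established: $D'$ is only $\bQ$-linearly pulled back, so $\sO_{X'}(-D')|_F$ could be nontrivial torsion, and the pushforward could even be zero. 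Finally, in your endgame $q=\dim Z$, the assertion that bigness of $\Delta'$ prevents $B_Z+M_Z\sim_\bQ 0$ in the canonical bundle formula is not obvious and needs a real argument. The paper's route via dualization and curve-positivity sidesteps all of these issues.
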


\begin{proof}

We denote the dimension of $X$ by $n$.

Consider the integral Weil divisor $\Gamma=D-K_X\sim_{\bQ}\Delta$ on $X$. 

Since
$K_X+\Delta$ is pseudo-effective, the pair $(X,\Delta)$ has a minimal model
$\phi : X \map Y$ (\cite[Corollary 1.4.2]{BCHM}). 
Set $\Delta_Y:=\phi_*\Delta$, $D_Y:=\phi_*D$ and 
$\Gamma_Y:=\phi_*\Gamma$, and notice that $K_Y=\phi_*K_X$.
Then $(Y, \Delta_Y)$ is klt, $\Delta_Y$ is big, and 
$K_Y+\Delta_Y\sim_\bQ D_Y$ is nef.
By the base point free Theorem (\cite[Corollary 3.9.2]{BCHM}),  
$K_Y+\Delta_Y$ is semiample, and thus $\kappa:=\kappa(\sO_{Y}(K_Y+\Delta_Y)) \ge 0$.

Suppose first that $\kappa =0 $. 
Since $K_Y+\Delta_Y$ is semiample,  $K_Y+\Delta_Y\sim_\bQ 0$.
By Lemma \ref{lemma:vanishing_weak_fano},  for every positive integer $q$,
$h^0(Y,\Omega_Y^{q}[\otimes]\sO_Y(-D_Y))=0$.
Since $\phi^{-1}$ does not contract any divisor, we conclude that
$h^0(X,\Omega_{X}^{q}[\otimes]\sO_{X}(-D))=0$
for every positive integer $q$.

Suppose from now on that $\kappa \ge 1$, and
let $\pi : Y \to W$ be the surjective morphism onto a normal variety defined by the linear system
$|mD_Y|$ for $m$  sufficiently large and divisible. 
Denote by $W_0$ the smooth locus of $W$, and note that $\dim(W)=\kappa$.

Suppose that 
$h^0(X,\Omega_X^{q}[\otimes]\sO_X(-D))\neq0$
for some $q \ge 1$.
Let $\xi:\sO_X(D) \hookrightarrow \Omega^{[q]}_X$ be the nonzero map
associated to a nonzero global section of $\Omega_X^{q}[\otimes]\sO_X(-D)$.
Since $\phi^{-1}$ does not contract any divisors, $\xi$ induces a nonzero map
$\xi_Y:\sO_{Y}(D_Y) \hookrightarrow \Omega^{[q]}_{Y}$.
By restricting $\xi_Y$ to the smooth locus $Y_0$ of $Y$,
considering the perfect pairing 
$\tau_0 : \Omega^{n-q}_{Y_0}\otimes\Omega^q_{Y_0} \to \Omega^n_{Y_0}$,
and extending back to $Y$, we see that 
$\xi_Y$ induces a nonzero map 
$\eta_Y : \Omega^{[n-q]}_{Y} \to \sO_{Y}(-{\Gamma_Y})$.

\begin{claim}
Over $\pi^{-1}(W_0)$, the map ${\eta_Y}$ factors through the canonical map 
$\Omega^{[n-q]}_{Y}\to \Omega^{[n-q]}_{Y/W}$. 
\end{claim}
\noindent To prove the claim, it is enough to show this factorization over $\pi^{-1}(W_0)\cap Y_0$.
Fix $y\in Y_0$ such that $w=\pi(y)\in W_0$.
At $y$, the kernel of 
$\Omega^{n-q}_{Y_0}\to \Omega^{n-q}_{Y_0/W}$ is generated
by $(n-q)$-forms of the form  
$\beta\wedge\pi^*\gamma$, where $\beta$ 
is a local section of $\Omega^{n-q-1}_{Y_0}$  at $y$, and  $\gamma$
is a local section of  $\Omega^1_{W}$ at $w$.
So we must show that ${(\eta_Y)_y}(\beta\wedge\pi^*\gamma)=0$. 
Let $\omega$ 
be a local generator of $\Omega^n_{Y_0}$ at $y$,
and $e$  a local  generator of $\sO_{Y}(\Gamma_Y)$ at $y$.
Then $\omega\otimes e$ is a local generator of $\sO_{Y}(D_Y)$ at $y$, and 
$$
{(\eta_Y)_y}(\beta\wedge\pi^*\gamma)\otimes\omega\otimes e 
= {(\tau_0)_y}\big({(\xi_Y)}_y(\omega\otimes e)\wedge\beta\wedge\pi^*\gamma\big).
$$
In order to prove the claim, we will show that 
\begin{equation}\label{eq1}
{(\xi_Y)}_y(\omega\otimes e)\wedge\pi^*\gamma=0.
\end{equation}
Given $s_0\in H^0(Y,\sO_Y(mD_Y))$ generating $\sO_Y(mD_Y)$ at $y$, 
we can find $(h_1,\ldots,h_\kappa)$ a local system of coordinates at $w$
such that $s_i=\pi^*h_i\,s_0\in H^0(Y,\sO_Y(mD_Y))$. 
In order to prove \eqref{eq1}, it is enough to show that, 
for every $1 \le i \le \kappa$, 
${(\xi_Y)}_y(\omega\otimes e)\wedge  \pi^*dh_i=0$.
This follows from the proof of the Bogomolov-Sommese vanishing Theorem (see \cite[Theorem III]{viehweg82}).
For the convenience of the reader, we sketch the proof. 
Let $p:T\to Y$ be a resolution of singularities of $Y$.
By \cite[Theorem 4.3]{greb_kebekus_kovacs_peternell10}, there exits
a nonzero map
$p^*\Omega^{[q]}_Y\to \Omega^{q}_{T}$. Thus, there exists a nonzero map $p^{[*]}\sO_Y(D_Y)\to \Omega^{[q]}_{T}\simeq \Omega^{q}_{T}$.
Replacing $Y$ with $T$, 
and $\sO_Y(D_Y)$ with $p^{[*]}\sO_Y(D_Y)$, 
we may assume that $Y$ is smooth.
By taking successively $m$-th root out of the $s_i$'s we may also assume that $m=1$,
and that $s_0= \omega\otimes e$. 
For $0 \le i \le \kappa$, set 
$\alpha_i:=\xi_Y(s_i)\in H^0(Y,\Omega_Y^1)$. 
Then $d\alpha_i=0$. On the other hand, 
$d\alpha_i=-\pi^*dh_i \wedge\alpha_0$.
This proves the claim.

Next we show that $q=\kappa$. 
Let $w\in W_0$ be a general point, and set $F:=\pi^{-1}(w)$. 
Then 
$(F,{\Delta_Y}_{|F})$ is klt,  and the adjunction formula gives   
${D_Y}_{|F}\sim_\bQ K_F+{\Delta_Y}_{|F}\sim_\bQ 0$.
Since $\eta_Y$ factors through the canonical map 
$\Omega^{[n-q]}_{Y}\to \Omega^{[n-q]}_{Y/W}$,  it induces a nonzero map
$\Omega^{[n-q]}_{F} \to \sO_F(-{\Gamma_Y}_{|F})$.
Thus $h^0(F,\Omega^{\dim(F)-n+q}_F[\otimes] \sO_F(-{D_Y}_{|F}))\neq 0$, and
Lemma \ref{lemma:vanishing_weak_fano} yields
$\dim(F)=n-q$. Hence $q=\kappa$. 

Let $C \subset W$ be a general complete intersection curve. 
Then $C \subset W_0$, $C$ is smooth, and $Y_C=\pi^{-1}(C)\subset Y$ is a normal projective variety equipped with a flat morphism 
$\pi_C:Y_C\to C$ whose general fibers are reduced.
Moreover, $\eta_Y$ restricts to a nonzero map
$\eta_{Y_C} : \Omega^{[n-q]}_{Y_C/C} \to 
\sO_{Y_C}(-{\Gamma_Y}_{|Y_C})$.
By \cite[Theorem 2.1']{bosch95}, there exists
a  smooth complete curve $B$, together with
a finite morphism $B \to C$, such that $\varpi:Z \to B$ has reduced fibers, where
$Z$ is the normalization of the fiber product $Y_C\times_C B$, and $\varpi:Z \to B$ is the natural morphism. 
Let $r : Y_C\times_C B \to Y_C$ denote the natural morphism, and set $(Y_C\times_C B)_0:=r^{-1}(Y_C\setminus \textup{Sing}(X))$. The map $\eta_{Y_C}$ induces 
a nonzero map $\Omega^{[n-q]}_{(Y_C\times_C B)_0/B} \to 
\sO_{(Y_C\times_C B)_0}(-{\Gamma_Y}_{|(Y_C\times_C B)_0})$.
By \cite[Proposition 4.5]{adk08}, this extends to a 
nonzero map
$$\eta_Z : \Omega^{[n-q]}_{Z/B} \to \sO_{Z}(-{\Gamma_Y}_{|Z}),$$
using the fact that 
$(Y_C\times_C B)\setminus (Y_C\times_C B)_0$ has codimension at least two in $Y_C\times_C B$.
By \cite[Lemme 4.4]{druel99}, 
$\Omega^{[n-q]}_{Z/B}\simeq \sO_{Z}(K_{Z/B})$.
If we denote by $F$ a general fiber of $\varpi$, then
$K_{F}+{\Delta_Y}_{|F}\sim_\bQ 0$ and
${\Gamma_Y}_{|Z}\sim_\bQ {\Delta_Y}_{|Z}$.
Thus there exists an effective divisor $\Psi$ on $Z$ such that $\varpi(\Supp(\Psi))\subsetneq B$ and
$$-(K_{Z/B}+\Psi)={\Gamma_Y}_{|Z}\sim_\bQ {\Delta_Y}_{|Z}.$$
Since ${\Delta_Y}_{|Z}$ is big, we can write 
${\Delta_Y}_{|Z}\sim_\bQ A+N$, with
$A$ an ample $\bQ$-divisor  and $N$ an effective $\bQ$-divisor.
Since $(Z,{\Delta_Y}_{|Z})$ is klt over the generic point of $B$,
so is $(Z,(1-\varepsilon){\Delta_Y}_{|Z}+\varepsilon N)$  for $\varepsilon >0$ sufficiently small. 
On the other hand, 
$$-(K_{Z/B}+\Psi+(1-\varepsilon){\Delta_Y}_{|Z}+\varepsilon N)\sim_\bQ \varepsilon A$$
is ample, contradicting \cite[Theorem 3.1]{adk08}. 
We conclude that $h^0(X,\Omega_X^{q}[\otimes]\sO_X(-D))=0$.
\end{proof}


\section{Foliations} \label{section:foliations}

\subsection{Foliations and Pfaff fields}

\begin{defn}
Let $X$ be a normal variety.
A \emph{foliation} on $X$ is a nonzero coherent subsheaf $\sF\subsetneq T_X$ satisfying
\begin{enumerate}
	\item $\sF$ is closed under the Lie bracket, and
	\item $\sF$ is saturated in $T_X$ (i.e., $T_X / \sF$ is torsion free).
\end{enumerate}

The \textit{rank} $r$ of $\sF$ is the generic rank of $\sF$.

The \textit{canonical class} $K_{\sF}$ of $\sF$ is any Weil divisor on $X$ such that 
$\sO_X(-K_{\sF})\simeq \det(\sF)$. 
\end{defn}

\begin{say}[Foliations defined by $q$-forms] \label{q-forms}
Let $\sF$ be a codimension $q$ foliation on an $n$-dimenional normal variety $X$.
The \emph{normal sheaf} of $\sF$ is $N_\sF:=(T_X/\sF)^{**}$.
The $q$-th wedge product of the inclusion
$N^*_\sF\into (\Omega^1_X)^{**}$ gives rise to a nonzero global section 
 $\omega\in H^0\big(X,\Omega^{q}_X[\otimes] \det(N_\sF)\big)$
 whose zero locus has codimension at least $2$ in $X$. 
Such $\omega$ is \emph{locally decomposable} and \emph{integrable}.
To say that $\omega$ is locally decomposable means that, 
in a neighborhood of a general point of $X$, $\omega$ decomposes as the wedge product of $q$ local $1$-forms 
$\omega=\omega_1\wedge\cdots\wedge\omega_q$.
To say that it is integrable means that for this local decomposition one has 
$d\omega_i\wedge \omega=0$ for all  $i\in\{1,\ldots,q\}$. 

Conversely, let $\sL$ be a reflexive sheaf of rank $1$ on $X$, and 
$\omega\in H^0(X,\Omega^{q}_X[\otimes] \sL)$ a global section
whose zero locus has codimension at least $2$ in $X$.
Suppose that $\omega$  is locally decomposable and integrable.
Then  we define 
a foliation of rank $r=n-q$ on $X$ as the kernel
of the morphism $T_X \to \Omega^{q-1}_X[\otimes] \sL$ given by the contraction with $\omega$. 
These constructions are inverse of each other. 
 \end{say}

\begin{defn}\label{def:gorenstein}
A foliation $\sF$ on a normal variety is said to be \emph{$\bQ$-Gorenstein} if 
its canonical class $K_{\sF}$ is $\bQ$-Cartier.
It is said to be Gorenstein if $K_{\sF}$ is Cartier.
\end{defn}

\begin{defn}\label{def:pfaff}
Let $X$ be a variety, and $r$ a  positive integer.
A  \emph{Pfaff field of rank r} on $X$ is a nonzero map
$\eta : \Omega^r_X\to \sL$, where  $\sL$ is a reflexive sheaf of rank 1
on $X$ such that $\sL^{[m]}$ is invertible for some integer $m\ge 1$. 
The \textit{singular locus} $S$ of $\eta$
is the closed subscheme of $X$ whose ideal sheaf $\sI_S$ is the image of
the induced map $\Omega^r_X[\otimes] \sL^*\to \sO_X$.

A closed subscheme $Y$ of $X$ is said to be \textit{invariant} under $\eta$ if 
\begin{enumerate}
	\item no irreducible component of $Y$ is contained in the singular locus of $\eta$, and
	\item the restriction 
	$\otimes^m\eta_{|Y} : {\otimes^m \Omega^r_X}_{|Y}\to{\sL^{[m]}}_{|Y}$ factors through the natural map
	${\otimes^m\Omega^r_X}_{|Y}\to\otimes^m\Omega^r_Y$, where $m\ge 1$ is such that $\sL^{[m]}$ is invertible.
	In other words, there is a commutative diagram 

\centerline{
\xymatrix{
\otimes^m{\Omega^r_X}_{|Y} \ar[r]^{\otimes^m\eta_{|Y}}\ar[d] & {\sL^{[m]}}_{|Y} \ ,\\
\otimes^m\Omega^r_Y\ar[ru]
}
}
\noindent where the vertical map is the natural one. 
\end{enumerate}
\end{defn}

Suppose that $Y$ is reduced and set $Y_0:=Y\setminus\textup{Sing}(Y)$. Observe that
$\sL_{|Y_0}$ is locally free (see \cite[Proposition 1.9]{hartshorne80}).
Suppose that condition (1) above is satisfied.
Then, $Y$ is invariant under $\eta$ if and only if 
the restriction 
$\eta_{|Y_0} : {\Omega^r_X}_{|Y_0}\to\sL_{|Y_0}$ factors through the natural map
${\Omega^r_X}_{|Y_0}\to{\Omega^r_Y}_{|Y_0}$.

Notice that a $\bQ$-Gorenstein foliation $\sF$ of rank $r$  on normal variety $X$ naturaly gives rise to a Pfaff field of rank $r$ on $X$:
$$
\eta:\Omega_X^r=\wedge^r(\Omega_X^1) \to \wedge^r(T_X^*) \to \wedge^r(\sF^*) \to \det(\sF^*)\simeq\det(\sF)^{*}=\sO_X(K_\sF).
$$

\begin{defn}\label{defn:sing_locus} \label{def:regular}
Let  $\sF$ be a $\bQ$-Gorenstein foliation on a normal variety $X$.
The \textit{singular locus} of  $\sF$ is defined to be
the singular locus $S$ of the associated Pfaff field. 
We say that $\sF$ is \textit{regular at a point} $x\in X$ if $x\not\in S$. 
We say that $\sF$ is \textit{regular} if $S=\emptyset$.
\end{defn}

\begin{exmp}[{\cite[Remark 3.12]{fano_foliation}}]
The notion of regularity of foliations discussed above do not say anything about the singularities 
of the ambient space.
For instance, let $Y$ be a smooth variety, $T$ any normal variety, and set $X:=Y\times T$, 
with natural projection $p:X\to Y$.
Set $\sF:= p^*T_Y\subset T_X$.
Then $\sF$ is a regular Gorenstein foliation, while $X$ may be very singular.
\end{exmp}

Our definition of Pfaff field is more general than the one usually found in the literature, where $\sL$ is required to be
invertible. This generalization is needed in order to treat  $\bQ$-Gorenstein foliations whose canonical classes are not Cartier.
Under the more restrictive definition, it was proved in \cite[Proposition 4.5]{adk08} that Pfaff fields extend uniquely to the normalization.
The next lemma generalizes this to our current setting.

\begin{lemma}\label{lemma:extensionpfafffields}
Let $X$ be a normal variety, and
$\sL$ a torsion free sheaf of rank $1$ on $X$ such 
that $\sL^{[m]}$ is locally free for some integer $m \ge 1$.
Let  $r$ be a positive integer, and
$\eta : \Omega^r_X\to \sL$ a Pfaff field.
Let $F \subset X$ be an invariant integral closed subscheme, and $\tilde e : \tilde F\to X$ its normalization.
Then the map $\otimes^m\Omega^r_F \to {\sL^{[m]}}_{|F}$
extends uniquely to a generically surjective map
$\otimes^m\Omega^r_{\tilde F}\to \tilde e^*\sL^{[m]}$.
\end{lemma}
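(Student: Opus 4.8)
The plan is to follow the strategy of \cite[Proposition~4.5]{adk08}, carrying along the $m$-th tensor powers that are forced on us by the fact that $\sL$ itself need not be locally free. Write $\nu:\tilde F\to F$ for the normalization and $\iota:F\into X$ for the inclusion, so that $\tilde e=\iota\circ\nu$. By invariance of $F$, the composite ${\otimes^m\Omega^r_X}_{|F}\to\sL^{[m]}_{|F}$ factors through the natural \emph{surjection} ${\otimes^m\Omega^r_X}_{|F}\to\otimes^m\Omega^r_F$ (surjective because $\iota^*\Omega^1_X\to\Omega^1_F$ is); denote the induced map by $\zeta:\otimes^m\Omega^r_F\to\sL^{[m]}_{|F}$. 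It is nonzero, since no irreducible component of $F$ is contained in the singular locus of $\eta$. Pulling $\zeta$ back by $\nu$ and using $\tilde e^*\sL^{[m]}\simeq\nu^*\big(\sL^{[m]}_{|F}\big)$ we obtain a nonzero map $\nu^*(\otimes^m\Omega^r_F)\to\tilde e^*\sL^{[m]}$. Since $\tilde e^*\sL^{[m]}$ is locally free, constructing the desired map $\otimes^m\Omega^r_{\tilde F}\to\tilde e^*\sL^{[m]}$ is equivalent to producing a global section of the reflexive sheaf $\cG:=(\otimes^m\Omega^r_{\tilde F})^{*}\otimes\tilde e^*\sL^{[m]}$ whose restriction to a dense open set recovers $\zeta$.

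\textbf{Construction on a large open set.} I would construct such a section first on a large open subset. Put $U:=\nu^{-1}\big(F\setminus\textup{Sing}(F)\big)$. As $F$ is integral, $\nu$ is an isomorphism over $F\setminus\textup{Sing}(F)$; hence $U$ lies in the smooth locus of $\tilde F$, and the restriction of $\Omega^r_{\tilde F}$ to $U$ is locally free and is identified, via $\nu$, with $\nu^*\Omega^r_{F\setminus\textup{Sing}(F)}$. Consequently $\zeta$ itself defines a section $\sigma$ of $\cG_{|U}$. Because $\cG$ is reflexive and $\tilde F$ is normal, to extend $\sigma$ to a global section of $\cG$ it suffices to extend it across the generic point of every prime divisor of $\tilde F$, the extension across loci of codimension at least two being automatic. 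If a prime divisor $D\subset\tilde F$ is not contained in $\tilde F\setminus U$ there is nothing to check; otherwise $\nu$ maps $D$ onto a prime divisor of $F$ contained in $\textup{Sing}(F)$.

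\textbf{The main obstacle: the local statement.} This local statement is the heart of the matter, and the step I expect to be the main obstacle. As $\tilde F$ is normal it is regular at the generic point $\xi$ of $D$, so $R:=\sO_{\tilde F,\xi}$ is a discrete valuation ring, and what has to be shown is that the rational map $\varphi:\otimes^m\Omega^r_{\tilde F}\map\tilde e^*\sL^{[m]}$ determined by $\sigma$ carries $(\otimes^m\Omega^r_{\tilde F})_\xi$ into $(\tilde e^*\sL^{[m]})_\xi=R$. Here one argues as in \cite[Proposition~4.5]{adk08}. Set $\cQ:=\textup{im}\big(\nu^*(\otimes^m\Omega^r_F)\to\otimes^m\Omega^r_{\tilde F}\big)$. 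The kernel of $\nu^*(\otimes^m\Omega^r_F)\to\otimes^m\Omega^r_{\tilde F}$ is supported on $\tilde F\setminus U$, hence is a torsion sheaf, hence maps to zero in the torsion free sheaf $\tilde e^*\sL^{[m]}$; therefore $\nu^*\zeta$ descends to a map $\cQ\to\tilde e^*\sL^{[m]}$ which agrees with $\varphi$ on $\cQ$ and equals $\sigma$ generically (since $\cQ=\otimes^m\Omega^r_{\tilde F}$ over $U$). It remains to verify, at $\xi$, that this map sends a generating set of $(\otimes^m\Omega^r_{\tilde F})_\xi$ into $R$; equivalently, that the pole of the inclusion $\cQ\hookrightarrow\otimes^m\Omega^r_{\tilde F}$ along $D$ is compensated by a zero of $\nu^*\zeta$ along $D$. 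This is exactly where invariance of $F$ is used in an essential way, through the fact that $\eta$ annihilates the forms vanishing along $F$, and it is the content of the local computation in the proof of \cite[Proposition~4.5]{adk08}; the modifications needed to accommodate $\otimes^m$ and the merely reflexive sheaf $\sL$ are purely formal.

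\textbf{Uniqueness and generic surjectivity.} Finally, these are immediate. Two extensions agree on the dense open set $U$ and take values in the torsion free sheaf $\tilde e^*\sL^{[m]}$, hence coincide. And the extension restricts over $U$ to the nonzero map $\nu^*\zeta$, so it is a nonzero map into a sheaf of rank $1$, and is therefore generically surjective.
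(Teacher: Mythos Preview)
Your overall strategy---extend over the preimage of the smooth locus of $F$, then use reflexivity of the target to reduce to codimension-$1$ points of $\tilde F$---is sound, and the reductions you make are correct. The gap is at the step you yourself flag as the main obstacle. The local computation in \cite[Proposition~4.5]{adk08} is a Seidenberg-type theorem: with $\sL$ a \emph{line bundle}, the dual of $\eta$ is an $r$-vector field tangent to $F$, and tangent multi-derivations extend to the integral closure. That derivation structure is precisely what is used, and it is lost under $\otimes^m$: the map $\otimes^m\Omega^r_F\to\sL^{[m]}_{|F}$ is not dual to a multi-derivation. If $\sL$ were locally free near the image $\tilde e(\xi)$ you could first extend $\eta$ itself by \cite[Proposition~4.5]{adk08} and then take $\otimes^m$; but when $\xi$ is a codimension-$1$ point of $\tilde F$ with $\tilde e(\xi)\in\textup{Sing}(X)$---and such points occur whenever $F$ meets $\textup{Sing}(X)$ in a divisor---$\sL$ need not be locally free, and there is no ``$m=1$'' statement to fall back on. The assertion that the modifications for $\otimes^m$ and for the merely reflexive $\sL$ are ``purely formal'' is exactly the missing argument.

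The paper avoids this difficulty by a different reduction. Since the statement is local on $X$, one may assume $\sL^{[m]}\simeq\sO_X$ and pass to the associated cyclic cover $\pi:Y\to X$, which is \'etale in codimension one and on which $\pi^{[*]}\sL$ is an honest line bundle. Now \cite[Proposition~4.5]{adk08} applies directly to the induced Pfaff field on $Y$ and the invariant subscheme $\pi^{-1}(F)$, giving the extension on its normalization $\tilde Z$. Taking $\otimes^m$ and using that the whole construction is equivariant for the Galois group of the cover, one descends the resulting map to $\tilde F$. Thus the paper reduces the new case to the old one rather than reproving the local Seidenberg step in a setting where its hypotheses fail.
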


\begin{proof}
It is enough to prove the claim on some open cover of $F$.
Thus, by replacing $X$ with an open subset if necessary, we 
may assume that $\sL^{[m]}\simeq\sO_X$.
Let $\pi:Y \to X$ be the corresponding 
cyclic cover (see \cite[Definition 2.52, Lemma 2.53]{kollar_mori}), 
with Galois group
$G$. Then $\pi$ is \'etale in codimension one, and
$\pi^{[*]}\sL\simeq \sO_Y$ is locally free.
Thus 
$\eta$ induces a Pfaff field $\zeta :
\Omega^{r}_Y\to \pi^{[*]}\sL$
on $Y$, and $Z:=\pi^{-1}(F)\subset Y$ is invariant under $\zeta$.
Let $\tilde f :\tilde Z\to Y$ be the normalization of $Z$, and 
denote by $\tilde \pi : \tilde Z \to \tilde F$ the induced morphism. 
Then
$\zeta$ induces a map
$\Omega^r_Z\to {\pi^{[*]}\sL}_{|Z}$, which extends uniquely to a generically surjective map
$\Omega^r_{\tilde Z}\to \tilde f^*\pi^{[*]}\sL$ by
\cite[Proposition 4.5]{adk08}. 
So we obtain a nonzero map
$$\xi:\otimes ^m\tilde \pi ^*\Omega^r_{\tilde F}
\overset{\otimes^m d\tilde\pi}{\longrightarrow}
\otimes^m\Omega^r_{\tilde Z}
\to \otimes^m\tilde f^*\pi^{[*]}\sL
\simeq \tilde \pi^* \tilde e^* \sL^{[m]}.$$
Observe that the natural action of $G$ on $Z$ extends to an action 
on $\tilde Z$ such that $\tilde Z/G\simeq \tilde Y$. Moreover,  
$\otimes^m\tilde \pi^*\Omega^r_{\tilde F}$ and
$\tilde \pi^* \tilde e^* \sL$ are naturally $G$-linearized sheaves
on $\tilde Z$
and $\xi$ is $G$-equivariant. Our claim follows. 
\end{proof}

\subsection{Algebraically integrable foliations}\label{section:algebraic_foliations}

\begin{defn}
Let $X$ be a normal variety. A foliation $\sF$ on $X$ is said to be
\emph{algebraically integrable} if  the leaf of $\sF$ through a general point of $X$ is an algebraic variety. 
In this situation, by abuse of 
notation we often use the word \textit{leaf} to mean the closure in $X$ of a leaf of $\sF$. 
\end{defn}

The following Lemma is well known to experts. See
for instance \cite[Lemma 3.2]{fano_foliation}.

\begin{lemma}\label{lemma:leaffoliation}
Let $X$ be a normal projective variety, and $\sF$ an algebraically integrable foliation on $X$.
There is a unique irreducible closed subvariety $W$ of $\Chow(X)$ 
whose general point parametrizes the closure of a general leaf of $\sF$
(viewed as a reduced and irreducible cycle in $X$). In other words, if 
$U \subset W\times X$ is the universal cycle, with universal morphisms
$\pi:U\to W$ and $e:U\to X$,
then $e$ is birational, and, for a general point $w\in W$, 
$e\big(\pi^{-1}(w)\big) \subset X$ is the closure of a leaf of $\sF$.
\end{lemma}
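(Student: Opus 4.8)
The plan is to construct $W$ as an irreducible component of an appropriate Chow variety, exhibiting it as the closure of the locus parametrizing closures of leaves. First I would pass to an open dense subset $X^\circ\subseteq X$ where $\sF$ is nice: namely the smooth locus of $X$ minus the singular locus of $\sF$ together with a further shrinking so that the leaves through points of $X^\circ$ are smooth of dimension $r=\operatorname{rank}(\sF)$ and, by algebraic integrability, are (open subsets of) algebraic subvarieties. Over $X^\circ$ one gets an honest holomorphic foliation in the classical sense, and through each point there is a unique leaf; since $\sF$ is algebraically integrable, for general $x\in X^\circ$ this leaf is Zariski-open in its closure $\overline{L_x}\subseteq X$, an irreducible $r$-dimensional subvariety. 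The assignment $x\mapsto[\overline{L_x}]$ (as a reduced irreducible cycle) then defines a rational map from $X$ to $\Chow(X)$; I would let $W$ be the closure of its image, which is irreducible since $X$ is irreducible.

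Next I would verify the universal-cycle statement. Let $U\subseteq W\times X$ be the universal cycle over $W$ with projections $\pi\colon U\to W$ and $e\colon U\to X$. For general $w\in W$, $\pi^{-1}(w)$ is by construction the cycle $\overline{L_x}$ for the corresponding $x$, hence $e(\pi^{-1}(w))=\overline{L_x}$ is the closure of a leaf. The key point is that $e$ is birational: over the open set $X^\circ$ (or a smaller dense open), every point $x$ lies on exactly one leaf closure among those parametrized by $W$ — this is precisely the uniqueness of the leaf through a general point of a foliation — so $e$ is generically one-to-one, and since $e$ is proper and $U$, $X$ are varieties of the same dimension ($\dim U = \dim W + r = (\dim X - r) + r = \dim X$, using that the general fiber of $\pi$ has dimension $r$), $e$ is birational.

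Finally I would address uniqueness of $W$. Suppose $W'\subseteq\Chow(X)$ is another irreducible closed subvariety whose general point parametrizes the closure of a general leaf. Then for general $w'\in W'$ the cycle $\pi'^{-1}(w')$ is $\overline{L_x}$ for some general $x\in X$, which is exactly the cycle attached to the point of $W$ constructed above; thus the rational map $X\map \Chow(X)$ has image dense in $W'$ as well, forcing $\overline{W'}=W=\overline{W}$.

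The main obstacle is the first step: making precise that on a suitable dense open subset of the normal variety $X$ the algebraically integrable foliation $\sF$ genuinely has algebraic leaves forming a flat or at least well-behaved family, so that the Chow-point map $x\mapsto[\overline{L_x}]$ is a morphism on a dense open set and $\pi$ has equidimensional general fibers of dimension $r$. Since the excerpt flags this lemma as well known and refers to \cite[Lemma 3.2]{fano_foliation}, I would handle the singularities of $X$ by restricting to $X\setminus\operatorname{Sing}(X)$ at the outset — the complement has codimension $\ge 2$, so closures of cycles and the resulting Chow subvariety are unaffected — and then invoke the standard theory of algebraic foliations on smooth varieties (e.g. the existence of the family of leaves as a subscheme of a relative Chow or Hilbert scheme) to get $W$, $U$, $\pi$, $e$ with the stated properties.
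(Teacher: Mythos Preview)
The paper does not actually prove this lemma: it simply states that the result is well known and refers the reader to \cite[Lemma 3.2]{fano_foliation}. Your sketch is exactly the standard argument one finds there --- construct the rational map $x\mapsto [\overline{L_x}]$ on a dense open where $\sF$ is a smooth foliation with algebraic leaves, take $W$ to be the closure of its image, and deduce birationality of $e$ from uniqueness of the leaf through a general point together with the dimension count $\dim U=\dim W+r=(\dim X-r)+r=\dim X$ --- so your proposal is correct and matches the intended proof.
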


\begin{notation}
We say that the subvariety $W$  provided by Lemma~\ref{lemma:leaffoliation} is 
\emph{the closure in $\Chow(X)$ of the subvariety parametrizing general leaves of $\sF$}.
We call the induced rational map $X\dashrightarrow W$ 
a \emph{rational first integral for $\sF$}.
\end{notation}

In \cite[Definition 3.4]{fano_foliation} we introduced the notion of general log leaf for algebraically integrable Gorenstein foliations.
Next we extend this to the $\bQ$-Gorenstein case.

\begin{defn}\label{defn:log_leaf}
Let $X$ be a normal projective variety, $\sF$ a $\bQ$-Gorenstein algebraically integrable foliation of rank $r$ on $X$, and 
$\eta:\Omega_X^{r} \to \sO_X(K_\sF)$ the corresponding Pfaff field.
Let $F\subset X$ be the closure of a general leaf of $\sF$, and $\tilde e:\tilde F\to X$
the normalization of $F$. Let $m \ge 1$ be the Cartier index of $K_\sF$, \textit{i.e.}, the smallest positive integer $m$ such that $mK_\sF$ is Cartier. 
By \cite[Lemma 2.7]{fano_foliation}, $F$ is invariant under $\eta$, \textit{i.e.}, the restriction 
$\otimes^m\eta_{|F} : {\otimes^m \Omega^r_X}_{|F}\to\sO_X(mK_\sF)_{|F}$ factors through the natural map
	${\otimes^m\Omega^r_X}_{|F}\to\otimes^m\Omega^r_Y$.
By Lemma \ref{lemma:extensionpfafffields}, the induced map
$\otimes^m\Omega^r_F \to \sO_X(mK_\sF)_{|F}$
extends uniquely to a generically surjective map
$\otimes^m\Omega^r_{\tilde F}\to \tilde e^*\sO_X(mK_\sF)$.
Hence there is a canonically defined effective Weil $\bQ$-divisor $\tilde \Delta$ on $\tilde F$ such that 
$mK_{\tilde F}+m\tilde \Delta\sim_\bZ \tilde e^*mK_\sF$. 
Namely, $m\tilde \Delta$ is the divisor of zeroes of $\tilde\eta$.

We call the pair $(\tilde F,\tilde \Delta)$ a \emph{general log leaf} of $\sF$.
\end{defn}

\begin{rem}\label{remark:definition_log_leaf}
Let $X$ be a normal projective variety,  and $\sF$ an algebraically integrable  $\bQ$-Gorenstein foliation of rank $r$ on $X$.
Let $W$ be the closure in $\Chow(X)$ of the subvariety parametrizing general leaves of $\sF$, and
$U \subset W\times X$ the universal cycle. Denote by $e:U\to X$ the natural morphism. 
Let $m$ be the Cartier index of $K_\sF$.
Then $\sF$ induces a map 
$\otimes^m\Omega_U^{r} \to e^*\sO_X(mK_\sF)$, which factors through 
the natural morphism $\otimes^m\Omega_U^{r} 
\twoheadrightarrow \otimes^m\Omega_{U/W}^{r}$ (see \cite[Remark 3.8 and Lemma 3.2]{fano_foliation}).

Let $\tilde W$ and $\tilde U$ be the normalizations of $W$ and $U$, respectively, and denote by 
$\tilde \pi:\tilde U\to \tilde W$ and $\tilde e:\tilde U\to X$ the induced morphisms. 
By Lemma \ref{lemma:extensionpfafffields}, the map 
$\otimes^m\Omega_U^{r} \to e^*\sO_X(mK_\sF)$
extends uniquely to a nonzero map
$\otimes^m\Omega_{\tilde U}^{r} \to {\tilde e}^*\sO_X(mK_\sF)$.
As before, this map factors through the natural map
$\otimes^m\Omega_{\tilde U}^{r} \twoheadrightarrow 
\otimes^m\Omega_{\tilde U/\tilde W}^{r}$,
yielding a generically surjective map 
$$
\otimes^m\Omega_{\tilde U/\tilde W}^{r}\to {\tilde e}^*\sO_X(mK_\sF).
$$
Thus there is a canonically defined effective Weil $\bQ$-divisor $\Delta$ on $\tilde U$ such that 
$K_{\tilde U/\tilde W}+ \Delta \sim_\bQ  {\tilde e}^*K_\sF$.

Let $w$ be a general point of $\tilde W$, set $\tilde U_w := \tilde \pi^{-1}(w)$ and $\Delta_w:=\Delta|_{\tilde U_w}$.
Then $(\tilde U_w, \Delta_w)$ coincides with the general log leaf $(\tilde F,\tilde \Delta)$ defined above.

The same construction can be carried out for any base change $V \to W$.
\end{rem}

Next we define notions of singularity for $\bQ$-Gorenstein algebraically integrable foliations
according to the singularity type of their general log leaf.

\begin{defn}\label{def:sing_fol}
Let $X$ be a normal projective variety, $\sF$ a $\bQ$-Gorenstein algebraically integrable foliation  on $X$,
and $(\tilde F,\tilde \Delta)$ its general log leaf. 
We say that $\sF$ has \emph{log terminal singularities along a general leaf} if
$(\tilde F,\tilde \Delta)$ is log terminal.
\end{defn}

\begin{prop}
\label{prop:foliation_algebraically_integrable_klt_not_weak_fano}
Let $\sF$ be an algebraically integrable $\bQ$-Gorenstein  foliation on a  normal projective variety $X$. Suppose
that $\sF$ has log terminal singularities along a general leaf.
Then $\det(\sF)$ is not nef and big. 
\end{prop}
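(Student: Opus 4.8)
The plan is to argue by contradiction, assuming $\det(\sF)$ is nef and big, and to transport this positivity from $X$ to the general log leaf $(\tilde F,\tilde \Delta)$, where it will contradict the log terminality hypothesis combined with Lemma~\ref{lemma:vanishing_weak_fano} (or, more directly, the kind of non-existence result already used in the proof of Theorem~\ref{thm:main}). First I would set up the family of leaves as in Remark~\ref{remark:definition_log_leaf}: let $\tilde W$, $\tilde U$ be the normalizations of the Chow subvariety $W$ parametrizing general leaves and of the universal cycle $U$, with $\tilde \pi : \tilde U \to \tilde W$ and $\tilde e : \tilde U \to X$, and let $\Delta$ be the canonical effective $\bQ$-divisor on $\tilde U$ with $K_{\tilde U/\tilde W} + \Delta \sim_\bQ \tilde e^* K_\sF$. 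Restricting to a general fiber $w \in \tilde W$ recovers the general log leaf $(\tilde F, \tilde \Delta) = (\tilde U_w, \Delta_w)$, with $K_{\tilde F} + \tilde \Delta \sim_\bQ \tilde e^* K_\sF\big|_{\tilde F}$, i.e. $-(K_{\tilde F} + \tilde \Delta) \sim_\bQ \tilde e^*(-K_\sF)\big|_{\tilde F} = (\det\sF)\big|_{\tilde F}$ as Weil $\bQ$-divisors.

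Next I would pull back the positivity. Since $\tilde e : \tilde F \to F \subset X$ is finite onto its image and $\det(\sF) = \sO_X(-K_\sF)$ is nef and big, the restriction $(-K_\sF)\big|_F$ is nef and big on $F$, hence $\tilde e^*(-K_\sF)\big|_{\tilde F}$ is nef and big on $\tilde F$ (finite pullback preserves nef and big). Therefore $-(K_{\tilde F} + \tilde \Delta)$ is a nef and big $\bQ$-Cartier $\bQ$-divisor on $\tilde F$, and by hypothesis $(\tilde F, \tilde \Delta)$ is klt. Now I would invoke exactly the mechanism behind Lemma~\ref{lemma:vanishing_weak_fano}/Theorem~\ref{thm:rc}: a klt pair whose anti-log-canonical class is nef and big is rationally connected, and carries no nonzero reflexive pluri-forms of positive degree. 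But this is not yet a contradiction by itself — one still needs to produce a nonzero such form. So the cleaner route is to appeal to \cite[Theorem 3.1]{adk08}: the generically surjective map $\otimes^m\Omega^r_{\tilde F} \to \tilde e^*\sO_X(mK_\sF)$ from Definition~\ref{defn:log_leaf} exhibits a "foliation-type" sheaf with ample-ish determinant, while $-(K_{\tilde F}+\tilde\Delta)$ being nef and big and $(\tilde F,\tilde\Delta)$ klt should force $K_{\tilde F}+\tilde\Delta$ to be, after a small perturbation $\tilde\Delta \rightsquigarrow (1-\varepsilon)\tilde\Delta + \varepsilon N$ with $\tilde\Delta \sim_\bQ A+N$, $A$ ample, anti-ample relative to a point — precisely the situation ruled out in the last paragraph of the proof of Theorem~\ref{thm:main}.

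The main obstacle I anticipate is the "big" part rather than the "nef" part: nefness of $-(K_{\tilde F}+\tilde\Delta)$ transports painlessly, but to run the perturbation argument (writing $\tilde\Delta \sim_\bQ A + N$ with $A$ ample) I need $\tilde\Delta$ — or rather $-(K_{\tilde F}+\tilde\Delta) \sim_\bQ (\det\sF)|_{\tilde F}$ — to be big, which is where the hypothesis that $\det(\sF)$ is \emph{big} on $X$ (not merely nef) is essential, and I must check bigness survives restriction to a general leaf $F$ and finite pullback to $\tilde F$; this follows because a general member of the family dominates $X$ via $e$ (so $F$ is not contained in the augmented base locus of $\det\sF$) but should be spelled out carefully. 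A secondary point is ensuring $K_{\tilde F}+\tilde\Delta$ is genuinely $\bQ$-Cartier so that all the $\bQ$-divisor manipulations make sense — this is built into Definition~\ref{defn:log_leaf} via the Cartier index $m$ of $K_\sF$. Once bigness is secured, the contradiction with \cite[Theorem 3.1]{adk08} is immediate, exactly as at the end of the proof of Theorem~\ref{thm:main}, and the proposition follows.
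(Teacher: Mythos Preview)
There is a genuine gap in your strategy: once you restrict to a single leaf, you cannot produce a contradiction. If $(\tilde F,\tilde\Delta)$ is klt and $-(K_{\tilde F}+\tilde\Delta)$ is nef and big, then $(\tilde F,\tilde\Delta)$ is simply a klt weak log Fano pair, and such pairs exist in abundance. The result \cite[Theorem 3.1]{adk08} you want to invoke is a statement about fibrations over a \emph{positive-dimensional} base (in the paper's usage, a curve): it says, roughly, that for $f:Z\to B$ with klt generic fiber and suitable effective boundary $\Gamma$, the class $-(K_{Z/B}+\Gamma)$ cannot be ample. Over a point this becomes the assertion that klt log Fano pairs do not exist, which is false. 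The analogy with the last paragraph of the proof of Theorem~\ref{thm:main} breaks down precisely because there the curve $B$ is a genuine curve obtained from the Iitaka fibration, not a point. Note also that the map $\otimes^m\Omega^r_{\tilde F}\to\tilde e^*\sO_X(mK_\sF)$ carries no extra ``foliation-type'' information on $\tilde F$: since $\dim\tilde F=r$, this is just the top-form map encoding $K_{\tilde F}+\tilde\Delta\sim_\bQ {K_\sF}|_{\tilde F}$.

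The fix, and the route taken in \cite[Proposition 5.8]{fano_foliation} to which the paper defers, is to keep the whole family: work with $\tilde\pi:\tilde U\to\tilde W$ (or its base change to a general complete intersection curve $C\subset\tilde W$), use the relation $K_{\tilde U/\tilde W}+\Delta\sim_\bQ\tilde e^*K_\sF$ from Remark~\ref{remark:definition_log_leaf}, and observe that since $\tilde e$ is birational the pullback $\tilde e^*(-K_\sF)$ is nef and big on $\tilde U$. After restricting to the family over $C$ and perturbing, one obtains a fibration over a curve whose relative anti-log-canonical class is ample and whose generic fiber is klt, and \emph{this} is what contradicts \cite[Theorem 3.1]{adk08}. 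The only new wrinkle in the $\bQ$-Gorenstein setting is that the Pfaff field need not lift directly to normalizations when $K_\sF$ is only $\bQ$-Cartier; this is exactly why the paper instructs you to replace \cite[Proposition 4.5]{adk08} by Lemma~\ref{lemma:extensionpfafffields}.
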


\begin{proof}
The proof is the same as the proof of  \cite[Proposition 5.8]{fano_foliation}.
One only needs to replace the use of  
\cite[Proposition 4.5]{adk08} with Lemma~ \ref{lemma:extensionpfafffields}.
\end{proof}

\subsection{$\bQ$-Fano foliations}

\begin{defn}\label{def:fano_foliation}\label{def:index_fano_foliation}\label{def:fano_pfaff}\label{def:index_fano_pfaff}
Let $X$ be a normal projective variety, and 
$\sF$ a $\bQ$-Gorenstein foliation  of rank $r$ on $X$.

We say that $\sF$ is a \emph{$\bQ$-Fano foliation} if $-K_{\sF}$ is ample.
In this case, the index of $\sF$ is the largest positive rational number $i_{\sF}$ such that
$-K_{\sF} \sim_\bQ i_{\sF} H$ for a Cartier divisor $H$ on $X$.

We say that a $\bQ$-Fano foliation  $\sF$ is a \textit{del Pezzo foliation} if $r\ge 2$ and $i_{\sF} = r-1$.
\end{defn}

\begin{rem}
Let $\sF$ be $\bQ$-Fano foliation of rank $r$ and index $i_{\sF}$ on a normal projective variety $X$, 
and let $H$ be an ample divisor on $X$ such that $-K_{\sF} \sim_\bQ i_{\sF}H$. 
Suppose that $K_\sF$ is Cartier. 
In general, we do not have $-K_{\sF} \sim_\bZ i_{\sF} H$.
However, this  holds if $X$ is a klt $\bQ$-Fano variety by Lemma \ref{lemma:Fano_lin_equ_and_num_equ}. 
\end{rem}

\begin{proof}[{Proof of Theorem \ref{thm:main_foliation}}]
Let $X$ be a klt projective variety, and $\sF\subsetneq T_X$ a $\bQ$-Fano foliation.
We assume to the contrary that $K_X-K_\sF$ is pseudo-effective.
Set $n:=\dim(X)$, and denote by $r$ the rank of $\sF$,  $1 \le r \le n-1$. 
The $r$-th wedge product of the inclusion 
$\sF\subsetneq T_X$ gives rise to 
a nonzero map 
$\xi : \sO_X(K_X)\otimes\det(\sF) \hookrightarrow \Omega^{[n-r]}_X$. Thus
$h^0(X,\Omega^{n-r}_X[\otimes]\sO_X(-K_X+K_\sF)\neq 0$. 
Choose a $\bQ$-divisor $\Delta\sim_{\bQ} -K_\sF$ such that the pair $(X,\Delta)$ is klt, and set
$D:=K_X-K_\sF$. 
Then $D$ is an integral divisor on $X$, $K_X+\Delta\sim_{\bQ}D$, and 
$h^0(X,\Omega^{n-r}_X[\otimes]\sO_X(-D)\neq 0$.
This contradicts Theorem \ref{thm:main} since $n-r\ge 1$.
\end{proof}

The following is an immediate consequence of Theorem \ref{thm:main_foliation}.

\begin{cor}\label{cor:index} \label{cor:fano_foliation_picard_number_one}
Let $X$ be a klt projective variety with Picard number $\rho(X)=1$, and $\sF$ a $\bQ$-Fano foliation of index $i_\sF$ on $X$. 
Then $X$ is a $\bQ$-Fano variety, and its index satisfies $i_X > i_\sF$.
\end{cor}


\section{The Kobayashi-Ochiai Theorem for foliations}\label{section:ko}

For Fano foliations on smooth projective varieties, the index is at most the rank, and equality holds only for degree $0$
foliations on $\bP^n$ (\cite[Theorem 2.10]{fano_foliation}). 
The goal of this section is to extend this result to the singular case.
We start by noticing that by allowing singularities we get more examples of $\bQ$-Fano foliations with index equal to the rank,
which we describe now.

\begin{defn}\label{Normal generalized cones}
Let $\sM$ be an ample line bundle on a normal projective variety
$Z$, and $r$ a positive integer. 
Consider the $\bP^r$-bundle  $Y=\bP_Z(\sM\oplus \sO_Z^{\oplus r})$, with natural projection
$\pi : Y\to Z$. The tautological line bundle $\sO_Y(1)$ is semiample on $Y$. 
For $m\gg 0$, the linear system $|\sO_Y(m)|$ induces a birational morphism $e : Y \to X$ onto a normal projective variety. 
The morphism $e$ contracts the divisor $E=\bP_Z(\sO_Z^{\oplus r})\subset Y$ corresponding to the projection 
$\sM\oplus \sO_Z^{\oplus r}\twoheadrightarrow \sO_Z^{\oplus r}$ onto  $e(E)\simeq\bP^{r-1}$, and 
induces an isomorphism $Y\setminus E \simeq X\setminus e(E)$.
Following \cite{beltrametti_sommese}, we call $X$ the \emph{normal generalized cone over the base $(Z,\sM)$
with vertex $e(E)\simeq \bP^{r-1}$}.
If $r=1$, then $X$ is called a normal cone over the base  $(Z,\sM)$.
\end{defn}

\begin{rem} \label{rem:Normal generalized cones}
Let the notation be as in Definition~\ref{Normal generalized cones}.

If $\rho(Z)=1$ and $Z$ is $\bQ$-factorial, then the same holds for $X$.

If $X$ is factorial, then so is $Z$.

Suppose that  $Z$ is  klt, and $K_Z\sim_\bQ -(1+a) c_1(\sM)$ for some $a \in \bQ$.
Then $Y$ is also klt, and
$K_Y\sim_\bQ\pi^*K_Z-(r+1)c_1(\sO(1))+\pi^*c_1(\sM) \sim_\bQ -(r+1)c_1(\sO_Y(1))-a\pi^*c_1(\sM)$. 
Let  $\sL$ be a line bundle on $X$ such that $e^*\sL\simeq \sO_Y(1)$. Then
$e_*\pi^*c_1(\sM)\sim_\bZ c_1(\sL)$, and 
$E\sim_\bZ c_1(\sO_Y(1))-\pi^*c_1(\sM)\sim_\bZ e^*c_1(\sL)-\pi^*c_1(\sM)$. Thus 
$K_X=e_*K_Y\sim_\bQ -(r+a+1) c_1(\sL)$ is $\bQ$-Cartier, and
$K_Y\sim_\bQ e^*K_X+aE$. 
By \cite[Lemma 3.10]{kollar97}), we conclude that 
$X$ is klt if and only if $a>-1$.
\end{rem}

\begin{say}\label{say:Normal generalized cones}
Let the notation be as in Definition~\ref{Normal generalized cones} and Remark~\ref{rem:Normal generalized cones}.
Let $\sF$ be the foliation of rank $r$ on $X$ induced by the rational map $X\dashrightarrow Z$.
Then $-K_{\sF}=r\sL$.
\end{say}

We will show that, under suitable conditions, the $\bQ$-Fano foliations described in \ref{say:Normal generalized cones} are 
the only ones for which the index equals the rank 
(Theorem~\ref{thm:codimension_r_Fano_index__equal_rank_picard_number_1}).

\begin{prop}\label{prop:ko_picard_number_at_least_2}
Let $X$ be a klt projective variety of dimension $n$.
If $X$ admits a $\bQ$-Fano foliation $\sF\subsetneq T_X$
of index $i_{\sF}\ge n-1$,
then $\rho(X)=1$.
\end{prop}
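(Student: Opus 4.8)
The plan is to argue by contradiction: suppose $\rho(X)\ge 2$ and derive a contradiction from the existence of a $\bQ$-Fano foliation $\sF\subsetneq T_X$ of index $i_\sF\ge n-1$, where $r=\operatorname{rank}(\sF)\le n-1$. First I would observe that since $-K_\sF\sim_\bQ i_\sF H$ with $H$ Cartier and ample, and $i_\sF\ge n-1\ge r$, the foliation $\sF$ is "as positive as possible". In fact, from Theorem~\ref{thm:main_foliation} we know $K_X+c_1(\det(\sF))=K_X-K_\sF$ is \emph{not} pseudo-effective, so in particular $X$ is uniruled; more importantly, I would like to use this to constrain the Mori cone. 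Since $-K_\sF$ is ample, for any extremal ray $R$ of $\NE(X)$ we have $(-K_\sF)\cdot R>0$, i.e. $\det(\sF)$ has positive degree on every extremal curve.

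The key step is to run the Minimal Model Program on $X$ (which is klt, hence a Mori dream setup is available through \cite{BCHM}) and analyze how $\sF$ interacts with an extremal contraction $\varphi\colon X\to X'$ of a $K_X$-negative extremal ray $R$ (such a ray exists since $X$ is uniruled, or one can first pass to an extremal ray on which $-K_\sF$ is "small" relative to $H$). The dichotomy is: either $\sF$ is the relative foliation of $\varphi$ (equivalently, $T_{X/X'}\subseteq\sF$ generically), or it is transverse to the fibers in a suitable sense. In the first case one compares $K_\sF$ with the relative canonical class and bounds the index of the restriction of $\sF$ to a general fiber $F$ of $\varphi$, obtaining a $\bQ$-Fano foliation on $F$ of index $\ge\dim F-1$ only if $F$ is low-dimensional, which should force $\varphi$ to be a fibration over a point, contradicting $\rho(X)\ge 2$. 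In the transverse case, one restricts $\sF$ to a general fiber $F$; the normal/tangent sequence shows $\sF_{|F}$ projects onto a foliation on $F$, or $F$ is $\sF$-invariant, and one argues that $-K_\sF$ restricted to $F$ being very positive relative to $\dim F$ contradicts the Kobayashi–Ochiai-type bound already available, or contradicts Proposition~\ref{prop:foliation_algebraically_integrable_klt_not_weak_fano} if $\sF_{|F}$ turns out to be algebraically integrable with mild singularities and nef-and-big determinant.

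More concretely, I expect the cleanest route is the following. Let $\varphi\colon X\to X'$ be a contraction of an extremal ray $R$ with $K_X\cdot R<0$, and let $\ell$ be a minimal rational curve in $R$ through a general point. Since $-K_\sF\sim_\bQ i_\sF H$ with $i_\sF\ge n-1$, we get $-K_\sF\cdot\ell\ge (n-1)\,H\cdot\ell\ge n-1$. On the other hand, by standard results on foliations and minimal rational curves (the tangent bundle of $X$ restricted to $\ell\cong\bP^1$ splits with at most one factor of degree $\ge 2$, the rest of degree $\le 1$), the determinant of any rank-$r$ subsheaf of $T_X$ has degree at most $r+1$ along $\ell$. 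Hence $n-1\le -K_\sF\cdot\ell\le r+1\le n$, which already pins down $r\in\{n-2,n-1\}$ and the splitting type of $T_X|_\ell$; pushing this further (using that $\ell$ moves in a family covering $X$ and that $\rho(X)\ge 2$ forces a second extremal ray) should yield $X\simeq\bP^n$ with $\rho=1$, a contradiction, or else force $\varphi$ to be a $\bP^k$-bundle structure whose fibers are $\sF$-invariant, again contradicting the index bound on the fibers.

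The main obstacle I anticipate is the transverse case and the bookkeeping of the restriction $\sF_{|F}$: ensuring that the restricted object is still a genuine foliation (saturation can fail on $F$), that its canonical class restricts correctly, and that the klt hypothesis on $X$ passes to $F$ (true for a general fiber of a contraction) so that one may legitimately invoke Theorem~\ref{thm:main_foliation} or Proposition~\ref{prop:foliation_algebraically_integrable_klt_not_weak_fano} on $F$. I would handle this by working on the smooth locus, using the reflexive-hull conventions set up in the notation section, and by choosing $F$ general enough that it meets $\operatorname{Sing}(X)$, the singular locus of $\sF$, and the indeterminacy loci in codimension $\ge 2$. The rest is a finite case check on the splitting type of $T_X|_\ell$ together with the classification of extremal contractions with very positive-on-fibers determinant, which is routine given the results cited earlier.
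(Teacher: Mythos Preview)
Your strategy is the right one, and your splitting-type computation on a rational curve is close to what the paper actually does. But there is a genuine gap: you never establish what the contraction $\varphi$ is. On a klt variety a $K_X$-negative extremal contraction can be divisorial, small, or a Mori fiber space over a base of any dimension; your proposal treats this as a ``finite case check'' and hopes that ``pushing this further should yield $X\simeq\bP^n$ or a $\bP^k$-bundle'', without saying how. This is exactly where the paper imports an outside result: it cites \cite[Theorem~2.1]{andreatta}, an adjunction-theoretic theorem asserting that if $X$ is klt, $\sL$ is ample, $K_X+i_\sF\,c_1(\sL)$ is not nef (with $i_\sF\ge n-1$), and $\rho(X)\ge 2$, then there is a morphism $p\colon X\to Y$ onto a smooth curve whose general fiber satisfies $(F,\sL_{|F})\simeq(\bP^{n-1},\sO_{\bP^{n-1}}(1))$. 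That one citation replaces your entire MMP discussion and the vague ``pushing this further'' step.

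With the $\bP^{n-1}$-fibration in hand, the paper's endgame is your splitting idea, applied to a general line $\ell$ in a general fiber, where smoothness of $X$ near $\ell$ is automatic and $T_X|_\ell\simeq\sO(2)\oplus\sO(1)^{n-2}\oplus\sO$, the last summand being $p^*T_Y|_\ell$. The degree bound $\deg\sF|_\ell\ge n-1$ forces either $T_\ell=\sO(2)\subset\sF|_\ell$ (whence $T_{X/Y}\subset\sF$, since lines through a point span $T_F$) or $\sF|_\ell\simeq\sO(1)^{n-1}$ (whence $\sF|_\ell\to p^*T_Y|_\ell=\sO$ vanishes, so $\sF\subset T_{X/Y}$). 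Either way $\sF=T_{X/Y}$, and Proposition~\ref{prop:foliation_algebraically_integrable_klt_not_weak_fano} gives the contradiction. So your dichotomy ``$T_{X/X'}\subseteq\sF$ versus transverse'' and the worries about restricting $\sF$ to fibers are not needed: both branches collapse to $\sF=T_{X/Y}$ once you know the fibers are $\bP^{n-1}$.
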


\begin{proof}
Let $\sL$ be an ample line bundle on $X$ such that $-K_\sF\sim_\bQ i_\sF c_1(\sL)$. 
By Theorem \ref{thm:main_foliation}, $K_X+i_\sF c_{1}(\sL)$ is not nef.
If $\rho(X)\ge 2$, then, by  \cite[Theorem 2.1]{andreatta}, 
there is a surjective morphism $p:X\to Y$ onto a smooth projective curve $Y$ such that 
$(F,\sL_{|F})\simeq (\bP^{n-1},\sO_{\bP^{n-1}}(1))$ for  a general fiber  $F$ of  $p$. 
Let $\ell  \subset F \simeq \bP^{n-1}$ be a general line. Then $X$ is smooth in a neighborhood of $\ell$ and
$\sF_{|\ell}\subset {T_X}_{|\ell}\simeq \sO_{\bP^1}(2)\oplus\sO_{\bP^1}(1)^{\oplus n-2}\oplus \sO_{\bP^1}$.
Thus, 
either $\sO_{\bP^1}(2)\oplus\sO_{\bP^1}(1)^{\oplus n-3}\subset \sF_{|\ell}$, or 
$\sF_{|\ell}\simeq \sO_{\bP^1}(1)^{\oplus n-1}$.
So we must have $\sF=T_{X/Y}$, which  
contradicts Proposition \ref{prop:foliation_algebraically_integrable_klt_not_weak_fano}. 
\end{proof}


\begin{prop}\label{prop:ko_leaves}
Let $X$ be a normal projective variety, and
$\sF\subsetneq T_X$ an algebraically integrable $\bQ$-Fano foliation of rank $r$.
Then 
\begin{enumerate}
	\item $i_{\sF}\le r$.
	\item If $i_{\sF}= r$, then the  general log leaf of $\sF$ satisfies $(\tilde F,\tilde \Delta)\simeq (\bP^r, H)$,
		where $H$ is a hyperplane in $\bP^r$.
\end{enumerate}
\end{prop}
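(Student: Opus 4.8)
The plan is to pass to a general log leaf $(\tilde F,\tilde\Delta)$ of $\sF$ and run the Kobayashi–Ochiai argument there, using Theorem~\ref{thm:ko}. First I would recall from Remark~\ref{remark:definition_log_leaf} that if $m$ is the Cartier index of $K_\sF$, then on the normalization $\tilde F$ of the closure of a general leaf there is a canonically defined effective Weil $\bQ$-divisor $\tilde\Delta$ with $K_{\tilde F}+\tilde\Delta\sim_\bQ \tilde e^*K_\sF$. Since $-K_\sF\sim_\bQ i_\sF H$ for an ample Cartier divisor $H$ on $X$, pulling back to $\tilde F$ gives $-(K_{\tilde F}+\tilde\Delta)\sim_\bQ i_\sF\,\tilde e^*H$, and $\sL:=\tilde e^*H$ is an ample line bundle on $\tilde F$ (ample since $\tilde F$ is positive-dimensional and $H$ is ample on $X$, and the general leaf is not contracted). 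Also $(\tilde F,\tilde\Delta)$ is a klt pair unless we are in a degenerate situation; but actually to apply Theorem~\ref{thm:ko} we only need $K_{\tilde F}+\tilde\Delta$ to be $\bQ$-Cartier and $\tilde\Delta$ effective, which we have, and $\dim\tilde F=r$. Wait — Theorem~\ref{thm:ko} requires no klt hypothesis, only that $K_X+\Delta$ be $\bQ$-Cartier with $-(K_X+\Delta)\sim_\bQ i c_1(\sL)$ and $\Delta$ effective; so it applies directly with $X=\tilde F$, $n=r$, $i=i_\sF$.

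Next, I would feed this into Theorem~\ref{thm:ko}. If $i_\sF>r$, part (1) of that theorem forces $(\tilde F,\sL)\simeq(\bP^r,\sO_{\bP^r}(1))$ and $\deg\tilde\Delta=r+1-i_\sF$; but then $i_\sF\le r+1$ and if $i_\sF=r+1$ we'd need $\deg\tilde\Delta=0$, i.e.\ $\tilde\Delta=0$ and $-K_{\tilde F}\sim_\bQ (r+1)\sL$ — I need to rule this out, presumably because the general log leaf of an algebraically integrable foliation always carries a nonzero $\tilde\Delta$ (the divisor of zeroes of the extended Pfaff field is not identically zero on a general leaf when the foliation is genuinely algebraically integrable of positive codimension, or more simply because $\bP^r$ with $\sF=T_X$ would contradict $\sF\subsetneq T_X$). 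More robustly: if $i_\sF\geq r+1$ then $i_\sF>r$, so by Theorem~\ref{thm:ko}(1), $i_\sF=r+1$ can only occur with $\tilde\Delta=0$, and then $\det(\sF)$ restricted along the leaf has the maximal positivity, forcing $\sF=T_X$ on a neighborhood of the general leaf — contradiction. Hence $i_\sF\le r$, proving (1). Actually, cleaner: for $i_\sF = r$, Theorem~\ref{thm:ko}(2) says either $(\tilde F,\sL)\simeq(\bP^r,\sO(1))$ with $\deg\tilde\Delta=1$, or $\tilde\Delta=0$ and $\tilde F$ is a quadric. In the quadric case $\tilde\Delta=0$ means the foliation is regular along a general leaf with $K_\sF$ pulling back exactly to $K_{\tilde F}$; I'd need to exclude this. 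The natural way is: the general log leaf of an algebraically integrable $\bQ$-Fano (hence non-trivial) foliation always has $\tilde\Delta\neq 0$, because a general leaf meets the branch/non-transverse locus — or invoke that $(\tilde F, \tilde\Delta)$ being log terminal with $\tilde\Delta=0$ and $-(K_{\tilde F})$ ample would make $\det(\sF)$ effectively nef and big along leaves, which via Proposition~\ref{prop:foliation_algebraically_integrable_klt_not_weak_fano} is obstructed. I expect the paper handles the quadric exclusion via a direct geometric argument that a smooth or singular quadric cannot be a general leaf with $\tilde\Delta=0$; lacking that, I would argue that $\tilde\Delta = 0$ forces the foliation to be regular along the general leaf and then the general log leaf being a Fano variety contradicts a positivity statement.

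Therefore in the case $i_\sF=r$ we are left with $(\tilde F,\sL)\simeq(\bP^r,\sO_{\bP^r}(1))$ and $\deg\tilde\Delta=1$. By Lemma~\ref{lemma:proj_klt}, an effective $\bQ$-divisor of degree $\le 1$ on $\bP^r$ gives a klt pair unless it equals a hyperplane; and since $\sF$ has log terminal singularities along a general leaf would be needed for klt, but in fact I don't even need klt here — I need $\tilde\Delta$ itself to be a hyperplane. The point is that $\tilde\Delta$ is an \emph{integral-coefficient} divisor after scaling: more precisely, $m\tilde\Delta$ is the zero divisor of the extended Pfaff map $\otimes^m\Omega^r_{\tilde F}\to \tilde e^*\sO_X(mK_\sF)$, hence $m\tilde\Delta$ has integer coefficients, and $mK_{\tilde F}+m\tilde\Delta\sim_\bZ \tilde e^*(mK_\sF)$. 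On $\bP^r$, $\sL\simeq\sO(1)$ gives $\tilde e^*(mK_\sF)\sim -m(r)\sO(1)$ and $mK_{\tilde F}\sim -m(r+1)\sO(1)$, so $m\tilde\Delta\sim m\sO(1)$, i.e.\ $m\tilde\Delta$ is $m$ hyperplanes. To conclude $\tilde\Delta$ is a single reduced hyperplane $H$, I would use that $\sF$ has log terminal singularities along a general leaf (this must be a standing hypothesis, given Definition~\ref{def:sing_fol} and Proposition~\ref{prop:foliation_algebraically_integrable_klt_not_weak_fano} are in play), so $(\bP^r,\tilde\Delta)$ is klt; then Lemma~\ref{lemma:proj_klt} tells us $\deg\tilde\Delta=1$ klt forces... hmm, actually Lemma~\ref{lemma:proj_klt} says degree $\le 1$ is klt \emph{unless} $\tilde\Delta=H$ — so a klt $\tilde\Delta$ of degree $1$ is \emph{not} a hyperplane, the opposite of what we want! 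So the log leaf here is \emph{not} klt, and $\tilde\Delta$ must be exactly a reduced hyperplane $H$ by the degree and linear-equivalence constraints combined with effectiveness and the structure of effective divisors of degree $1$ on $\bP^r$ (any such with $\tilde\Delta\neq H$ either has a coefficient $>1$ making $m\tilde\Delta\not\sim mH$ fail... no). The main obstacle, then, is precisely this last identification: showing that among effective divisors $\tilde\Delta$ on $\bP^r$ with $\deg\tilde\Delta=1$, the additional constraint coming from the foliation (that $\tilde\Delta$ arises as $\frac1m$(zero divisor of a Pfaff field)) plus whatever singularity hypothesis is in force pins $\tilde\Delta$ down to be a reduced hyperplane, giving $(\tilde F,\tilde\Delta)\simeq(\bP^r,H)$ and completing (2). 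I would structure the proof so that the Kobayashi–Ochiai input (Theorem~\ref{thm:ko}) does the heavy lifting for (1) and the shape of $\tilde F$, and a short linear-algebra/divisor-class argument on $\bP^r$ finishes the description of $\tilde\Delta$.
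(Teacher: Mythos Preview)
Your overall strategy is right, and you have identified every ingredient the paper uses: Theorem~\ref{thm:ko} applied to the general log leaf, Lemma~\ref{lemma:proj_klt}, and Proposition~\ref{prop:foliation_algebraically_integrable_klt_not_weak_fano}. The gap is that you misread the logical role of Proposition~\ref{prop:foliation_algebraically_integrable_klt_not_weak_fano}. You write that ``$\sF$ has log terminal singularities along a general leaf (this must be a standing hypothesis)''. It is not a hypothesis here; it is the \emph{obstruction}. Since $\sF$ is $\bQ$-Fano, $\det(\sF)$ is ample, hence nef and big. Proposition~\ref{prop:foliation_algebraically_integrable_klt_not_weak_fano} then says: the general log leaf $(\tilde F,\tilde\Delta)$ \emph{cannot} be klt. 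That single contrapositive disposes of all three cases that trouble you.

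Concretely: if $i_\sF>r$, Theorem~\ref{thm:ko}(1) gives $\tilde F\simeq\bP^r$ with $\deg\tilde\Delta=r+1-i_\sF<1$, so Lemma~\ref{lemma:proj_klt} forces $(\bP^r,\tilde\Delta)$ klt, contradiction. If $i_\sF=r$ and we are in the quadric case, then $\tilde\Delta=0$ and $(Q_r,0)$ is klt (a quadric hypersurface has canonical, hence klt, singularities), contradiction. If $i_\sF=r$ and $\tilde F\simeq\bP^r$ with $\deg\tilde\Delta=1$, then Lemma~\ref{lemma:proj_klt} says $(\bP^r,\tilde\Delta)$ is klt \emph{unless} $\tilde\Delta$ is a hyperplane; since klt is forbidden, $\tilde\Delta=H$. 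You actually wrote the sentence ``so the log leaf here is not klt'' but then abandoned it in favour of a divisor-class argument; that sentence, justified by Proposition~\ref{prop:foliation_algebraically_integrable_klt_not_weak_fano}, is exactly what finishes the proof. No integrality or Pfaff-field argument is needed for the last step.
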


\begin{proof}
Let $\sL$ be an ample line bundle on $X$ such that $-K_\sF\sim_\bQ i_{\sF}c_1(\sL)$.

We denote by $\tilde e:\tilde F\to X$ the natural morphism.  
Recall from Definition \ref{defn:log_leaf} that 
$$
-(K_{\tilde F}+\tilde \Delta) \sim_\bQ  -\tilde e^*K_{\sF}\sim_\bQ i_{\sF}c_1(\tilde e^*\sL).
$$

Suppose that $i_{\sF}>r$. Then
$\tilde F\simeq\bP^r$ and $\deg(\tilde \Delta)=r+1-i$ by Theorem \ref{thm:ko}. 
In particular $(\tilde F,\tilde \Delta)$ is klt by Lemma \ref{lemma:proj_klt}. 
But this contradicts Proposition \ref{prop:foliation_algebraically_integrable_klt_not_weak_fano}.
So $i_{\sF}\le r$.

Suppose  $i_{\sF}=r$.
By Theorem \ref{thm:ko},  one of the following holds.
\begin{itemize}
	\item Either $(\tilde F,\tilde e^*\sL)\simeq  (\bP^{r},\sO_{\bP^{r}}(1))$ and $\deg(\tilde \Delta)=1$,  or 
	\item $\tilde \Delta=0$ and $(\tilde F,\tilde e^*\sL)\simeq (Q_{r},\sO_{Q_{r}}(1))$ where 
		$Q_{r}$ is a possibly singular quadric hypersurface in $\bP^{r+1}$.
\end{itemize}
In the first case, we conclude that $\tilde \Delta=H$ is a hyperplane in $\bP^r$ by Lemma \ref{lemma:proj_klt}
and Proposition \ref{prop:foliation_algebraically_integrable_klt_not_weak_fano}.
The latter case does not occur by Proposition \ref{prop:foliation_algebraically_integrable_klt_not_weak_fano}.
\end{proof}

The next lemma is useful to prove algebraic integrability of $\bQ$-Fano foliations of high index.
First we recall the notion of slope of a torsion-free sheaf.

\begin{say}
Let $X$ be an $n$-dimensional projective variety, and $\sL$ an ample line bundle on $X$. 
The \emph{slope with respect to $\sL$} of a torsion-free sheaf  $\sF$ of rank $r$ on $X$ is 
$\mu_{\sL}(\sF)=\frac{c_1(\sF)\cdot \sL^{n-1}}{r}$.
\end{say}

\begin{lemma}[{\cite[Proposition 7.5]{fano_foliation}}]\label{lemma:foliation_max}
Let $X$ be a normal projective variety, 
$\sL$ an ample line bundle on $X$,
and $\sF\subsetneq T_X$ a foliation on $X$.
Suppose that $\mu_{\sL}(\sF)>0$, and
let $C\subset X$ be a general complete intersection curve. 
Then either 
\begin{itemize}
\item $\sF$ is algebraically integrable with rationally connected general leaves, or
\item there exists an algebraically integrable subfoliation $\sG\subsetneq \sF$
with rationally connected general leaf such that 
$\det(\sG)\cdot C \ge \det(\sF)\cdot C$. 
\end{itemize}
\end{lemma}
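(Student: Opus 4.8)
\smallskip
\noindent\textbf{Sketch of proof.}
This is \cite[Proposition 7.5]{fano_foliation}, and the plan is to reproduce that argument, whose engine is the interplay between the Harder--Narasimhan formalism along a general complete intersection curve and the Bogomolov--McQuillan theorem on rationally connected foliations. Throughout I would fix $C=D_1\cap\cdots\cap D_{n-1}$ with $D_i\in|m_i\sL|$ general and $m_i\gg 0$, so that $C$ is a smooth irreducible curve contained in the smooth locus of $X$, along which $T_X$, $\sF$, and all the terms of the Harder--Narasimhan filtration of $\sF$ with respect to $\sL$ are locally free.

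First I would single out the \emph{positive part} of $\sF$. Let $0=\sF_0\subset\sF_1\subset\cdots\subset\sF_k=\sF$ be the Harder--Narasimhan filtration of $\sF$ with respect to $\sL$, and let $\sG:=\sF_i$ be the largest step with $\mu_{\sL}(\sF_i/\sF_{i-1})>0$. Since $\mu_{\sL}(\sF_1)\ge\mu_{\sL}(\sF)>0$, we have $\sG\neq 0$; and $\sG$ is saturated in $\sF$, hence in $T_X$ (an extension of torsion-free by torsion-free). By the Mehta--Ramanathan restriction theorem (reduced to the case of a general surface section), for $C$ general of large degree the Harder--Narasimhan filtration of $\sF|_C$ is the restriction of that of $\sF$; thus $\mu_{\min}(\sG|_C)>0$, and $\mu_{\max}\big((\sF/\sG)|_C\big)\le 0$ when $\sG\subsetneq\sF$. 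Next I would check that $\sG$ is a foliation: if $\sG=\sF$ this is the hypothesis, and if $\sG\subsetneq\sF$ then, since $\sF$ is closed under the Lie bracket, the bracket induces an $\sO_X$-linear alternating map $\wedge^2\sG\to\sF/\sG$ (the non-$\sO_X$-linear correction terms in $[fv,w]=f[v,w]-w(f)\,v$, with $f$ a local function and $v,w$ local sections of $\sG$, lie in $\sG$). Restricted to $C$, the source has $\mu_{\min}>0$ and the target has $\mu_{\max}\le 0$, so this map vanishes on $C$; as $C$ is a general curve of high degree it then vanishes off a closed set of codimension $\ge 2$, and torsion-freeness of $T_X/\sG$ upgrades this to $[\sG,\sG]\subseteq\sG$ on all of $X$.

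Now $\sG\subsetneq T_X$ is a foliation with $\mu_{\min}(\sG|_C)>0$, so the Bogomolov--McQuillan theorem (in the form completed by Kebekus, Sol\'a Conde and Toma and used exactly as in \cite{fano_foliation}) shows that $\sG$ is algebraically integrable with rationally connected general leaves. Finally, from $c_1(\sF)=c_1(\sG)+\sum_{j>i}c_1(\sF_j/\sF_{j-1})$ with $c_1(\sF_j/\sF_{j-1})\cdot\sL^{n-1}\le 0$ for $j>i$, and $D\cdot C=\big(\prod_i m_i\big)(D\cdot\sL^{n-1})$ for any divisor $D$, I would deduce $\det(\sG)\cdot C\ge\det(\sF)\cdot C$. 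If $\sG=\sF$, then $\sF$ itself is algebraically integrable with rationally connected leaves, giving the first alternative; if $\sG\subsetneq\sF$, then $\sG$ is the desired subfoliation.

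The main obstacle is the appeal to Bogomolov--McQuillan on the normal (possibly singular) variety $X$: one must pass to a resolution without destroying the positivity of $\sG$ along a general complete intersection curve, run the rational-connectedness argument upstairs, and descend the resulting algebraic, rationally connected leaves back to $\sF$ on $X$. A secondary technical point is the Mehta--Ramanathan restriction theorem for Harder--Narasimhan filtrations on singular $X$, handled by cutting down to a general surface section. Both are carried out in \cite{fano_foliation}.
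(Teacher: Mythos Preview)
The paper gives no proof of this lemma; it is simply quoted from \cite[Proposition 7.5]{fano_foliation}. Your sketch correctly reproduces the argument from that reference: take the positive part of the Harder--Narasimhan filtration, use Mehta--Ramanathan to restrict to a general complete intersection curve, show closure under the Lie bracket via the slope inequality on the induced $\sO_X$-linear map $\wedge^2\sG\to\sF/\sG$, apply Bogomolov--McQuillan/Kebekus--Sol\'a Conde--Toma for algebraic integrability with rationally connected leaves, and compare degrees. There is nothing to add.
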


\begin{prop}\label{prop:ko_alg_int}
Let $X$ be a normal $\bQ$-factorial projective variety,
and assume that $\Pic(X)/tors \simeq \bZ$.
Let $\sF\subsetneq T_X$
be a $\bQ$-Fano foliation of rank $r$ and index $i_{\sF}>r-1$. 
Then $\sF$ is algebraically integrable with rationally connected general leaves.
\end{prop}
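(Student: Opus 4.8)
The plan is to apply Lemma~\ref{lemma:foliation_max} to the pair $(X,\sL)$, where $\sL$ is an ample line bundle with $c_1(\det(\sF))\sim_\bQ i_\sF c_1(\sL)$, and then rule out the second alternative using the numerical constraint $i_\sF>r-1$ together with the hypothesis $\Pic(X)/\mathrm{tors}\simeq\bZ$. First I would note that $\mu_\sL(\sF)=\frac{c_1(\det(\sF))\cdot\sL^{n-1}}{r}=\frac{i_\sF}{r}c_1(\sL)^{n}>0$, so the hypotheses of Lemma~\ref{lemma:foliation_max} are met. If $\sF$ is already algebraically integrable with rationally connected general leaves, we are done; so assume instead that there is an algebraically integrable subfoliation $\sG\subsetneq\sF$ with rationally connected general leaf and $\det(\sG)\cdot C\ge\det(\sF)\cdot C$ for a general complete intersection curve $C$.

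The next step is to extract a contradiction from this. Write $s=\mathrm{rank}(\sG)$, so $1\le s\le r-1$ since $\sG\subsetneq\sF$. Because $\Pic(X)/\mathrm{tors}\simeq\bZ$ is generated by (the class of) $\sL$ — here I would use that $X$ is $\bQ$-factorial so $N^1(X)$ is $1$-dimensional, hence $C$ spans the $1$-dimensional space of curve classes up to scaling — numerical and $\bQ$-linear equivalence of divisors coincide up to torsion, and $\det(\sG)$, being a Weil divisor on the $\bQ$-factorial $X$, is $\bQ$-linearly equivalent to $j\,c_1(\sL)$ for some $j\in\bQ$. The inequality $\det(\sG)\cdot C\ge\det(\sF)\cdot C$ translates into $j\ge i_\sF>r-1\ge s$, i.e. $\det(\sG)\sim_\bQ j\,c_1(\sL)$ with $j>s$. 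Thus $\sG$ is a $\bQ$-Fano foliation of rank $s$ and index $i_\sG\ge j>s$. But $\sG$ is algebraically integrable, so Proposition~\ref{prop:ko_leaves}(1) gives $i_\sG\le s$, a contradiction. Hence the second alternative of Lemma~\ref{lemma:foliation_max} cannot occur, and $\sF$ is algebraically integrable with rationally connected general leaves.

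A technical point I would be careful about is whether $\det(\sG)$ is genuinely $\bQ$-Cartier, i.e. whether $\sG$ is $\bQ$-Gorenstein, so that the index $i_\sG$ and Proposition~\ref{prop:ko_leaves} apply to it. This is exactly where $\bQ$-factoriality of $X$ is used: every Weil divisor on $X$, in particular $c_1(\det(\sG))$, is automatically $\bQ$-Cartier, so $\sG$ is $\bQ$-Gorenstein. One should also confirm that $\det(\sG)\cdot C>0$ — equivalently $j>0$ — before invoking Proposition~\ref{prop:ko_leaves}; but this is immediate from $j>r-1\ge 0$. I expect the main obstacle to be purely bookkeeping: carefully converting the curve-intersection inequality $\det(\sG)\cdot C\ge\det(\sF)\cdot C$ into the divisor-class inequality $j\ge i_\sF$ using the rank-one Néron–Severi hypothesis, and making sure the rational numbers $i_\sF$, $i_\sG$, $j$, $s$, $r$ are compared in the right normalization (all expressed as multiples of the single generator $c_1(\sL)$). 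Once that translation is set up cleanly, the contradiction with Proposition~\ref{prop:ko_leaves}(1) is immediate.
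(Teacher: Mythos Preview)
Your proposal is correct and follows essentially the same route as the paper: apply Lemma~\ref{lemma:foliation_max}, and in the second alternative use $\Pic(X)/\mathrm{tors}\simeq\bZ[\sL]$ and $\bQ$-factoriality to convert $\det(\sG)\cdot C\ge\det(\sF)\cdot C$ into $i_{\sG}\ge i_{\sF}>r-1\ge s$, contradicting Proposition~\ref{prop:ko_leaves}(1). The paper's proof is just a terser version of exactly this argument, so your careful unpacking of the bookkeeping (why $\sG$ is $\bQ$-Gorenstein, why $j>0$) is on point but not strictly needed beyond what you wrote.
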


\begin{proof}
Let $\sL$ be an ample line bundle on $X$ such that $-K_\sF\sim_\bQ i_{\sF}c_1(\sL)$. 
Then $\Pic(X)/tors=\bZ[\sL]$.

By Lemma \ref{lemma:foliation_max}, either $\sF$ 
is algebraically integrable with rationally connected general leaves, 
or  there is an algebraically integrable foliation
$\sG\subsetneq \sF$ of rank $s \le r-1$ and index $i_{\sG}\ge i_{\sF}$.
In the latter case, $i_{\sG}\ge i_{\sF}>r-1 \ge s$. But this contradicts
Proposition \ref{prop:ko_leaves}. Our claim follows.
\end{proof}

The following corollary is an immediate consequence of Propositions \ref{prop:ko_leaves} and \ref{prop:ko_alg_int}.

\begin{cor}\label{cor:ko_alg_int}
Let $X$ be a normal $\bQ$-factorial projective variety such that $\Pic(X)/tors \simeq \bZ$.
Let $\sF\subsetneq T_X$ be a $\bQ$-Fano foliation of rank $r$. Then 
\begin{enumerate}
	\item $i_{\sF}\le r$.
	\item If $i_{\sF}= r$, then $\sF$ is algebraically integrable, and the  general log leaf of $\sF$ satisfies $(\tilde F,\tilde \Delta)\simeq (\bP^r, H)$,
		where $H$ is a hyperplane in $\bP^r$.
\end{enumerate}
\end{cor}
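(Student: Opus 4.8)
The plan is to deduce both statements directly from Propositions~\ref{prop:ko_leaves} and~\ref{prop:ko_alg_int}, organizing the argument around a trivial case distinction on the size of $i_{\sF}$.

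First I would dispose of statement (1). If $i_{\sF}\le r-1$ there is nothing to prove, so I may assume $i_{\sF}>r-1$. Under the standing hypotheses of the corollary ($X$ normal $\bQ$-factorial with $\Pic(X)/tors\simeq\bZ$), Proposition~\ref{prop:ko_alg_int} then applies and shows that $\sF$ is algebraically integrable (with rationally connected general leaves, although only algebraic integrability is needed here). Now Proposition~\ref{prop:ko_leaves}(1), applied to the algebraically integrable $\bQ$-Fano foliation $\sF$, gives $i_{\sF}\le r$, as desired.

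For statement (2), suppose $i_{\sF}=r$. Then in particular $i_{\sF}=r>r-1$, so the reasoning of the previous paragraph already shows that $\sF$ is algebraically integrable. Applying Proposition~\ref{prop:ko_leaves}(2) to $\sF$, now with $i_{\sF}=r$, yields that its general log leaf $(\tilde F,\tilde\Delta)$ is isomorphic to $(\bP^r,H)$ with $H$ a hyperplane in $\bP^r$, which is exactly the assertion.

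Since both Propositions are already in hand, I do not anticipate any genuine obstacle. The only points requiring attention are bookkeeping ones: that the hypotheses of Proposition~\ref{prop:ko_alg_int}, namely $\bQ$-factoriality of $X$ and $\Pic(X)/tors\simeq\bZ$, are part of the corollary's hypotheses (they are), and that Proposition~\ref{prop:ko_leaves} asks only for $X$ normal (automatic here). Thus the "hard part" is essentially empty; the entire content of the corollary lies in the two Propositions it invokes.
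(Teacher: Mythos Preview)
Your argument is correct and follows exactly the paper's approach: the paper simply states that the corollary is an immediate consequence of Propositions~\ref{prop:ko_leaves} and~\ref{prop:ko_alg_int}, which is precisely the two-step deduction you wrote out.
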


In the situation of Corollary~\ref{cor:ko_alg_int}(2), we want to prove that $X$ is a normal generalized cone.
The idea is to look at  the family of leaves $\pi:U\to W$ described in Lemma~\ref{lemma:leaffoliation}.
The first step is to show that $\pi$ is a $\bP^r$-bundle.
For that we need the following characterization of projective bundles, 
which extends \cite[Lemma 2.12]{fujita85} to the singular setting.

\begin{prop}\label{prop:p_bundle}
Let $X$ be a normal variety, and $p:X \to Y$ an equidimensional  projective morphism
of relative dimension $k$ onto a normal variety. 
Suppose that there exists a $p$-ample line bundle $\sL$ on $X$ such that, for a general point $y\in Y$, 
$(X_y,\sL_{|X_y})\simeq (\bP^k,\sO_{\bP^k}(1))$. Then 
$(X,\sL)\simeq (\bP_Y(\sE),\sO_{\bP_Y(\sE)}(1))$ as varieties over $Y$, where 
$\sE:=p_*\sL$ is a vector bundle of rank $r+1$.
\end{prop}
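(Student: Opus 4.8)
The plan is to reduce to the smooth case treated in \cite[Lemma 2.12]{fujita85} by a careful analysis of the push-forward sheaf $\sE:=p_*\sL$ and the evaluation map. First I would establish that $\sE$ is a vector bundle of rank $k+1$ on $Y$. Since $p$ is equidimensional of relative dimension $k$ onto a normal (hence, after shrinking, Cohen--Macaulay) base and the general fibre is $\bP^k$ with $\sL$ restricting to $\sO_{\bP^k}(1)$, the function $y\mapsto h^0(X_y,\sL_{|X_y})$ is upper semicontinuous with generic value $k+1$; the key point is to rule out jumps. For this I would use that $\sL$ is $p$-ample, so $h^i(X_y,\sL^{\otimes m}_{|X_y})=0$ for $i>0$ and $m\gg0$, together with a base-change argument, and the fact that $R^ip_*\sL$ has support of codimension $\ge 1$ for $i>0$; combined with the flatness of $p$ over the locus where fibre dimension is constant (equidimensionality plus normality of $Y$) this forces $p$ to be flat and $\sE$ locally free of rank $k+1$ with formation commuting with base change.

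Next I would construct the morphism to $\bP_Y(\sE)$. The surjection $p^*\sE=p^*p_*\sL\to\sL$, which is surjective because it is surjective on every fibre (there $\sL_{|X_y}=\sO_{\bP^k}(1)$ is globally generated and $H^0$ has the right dimension), induces a $Y$-morphism $\phi:X\to\bP_Y(\sE)$ with $\phi^*\sO_{\bP_Y(\sE)}(1)\simeq\sL$. Restricting to a general fibre, $\phi_y:\bP^k\to\bP^k$ is the morphism defined by the complete linear system $|\sO_{\bP^k}(1)|$, hence an isomorphism; so $\phi$ is birational and an isomorphism over a dense open subset of $Y$. Since $\sL$ is $p$-ample and $\phi^*\sO_{\bP_Y(\sE)}(1)\simeq\sL$, the morphism $\phi$ is finite, and being birational onto the normal variety $\bP_Y(\sE)$, it is an isomorphism by Zariski's main theorem. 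This gives $(X,\sL)\simeq_Y(\bP_Y(\sE),\sO_{\bP_Y(\sE)}(1))$.

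The main obstacle I expect is the first step: showing that $p$ is actually flat and that $\sE$ is locally free of the expected rank everywhere on $Y$, including over the (a priori possible) bad locus where the fibre could degenerate. The hypothesis only gives control of $(X_y,\sL_{|X_y})$ for \emph{general} $y$, so one must genuinely use equidimensionality together with normality (and hence $S_2$) of both $X$ and $Y$ to propagate flatness and cohomology-and-base-change across all of $Y$; here I would invoke the standard criterion that an equidimensional morphism from a Cohen--Macaulay scheme to a regular scheme is flat, after first reducing (by normality of $X$ and of $Y$) to working over the smooth locus of $Y$ and over the Cohen--Macaulay locus of $X$, whose complement has codimension $\ge 2$ and so does not affect the reflexive sheaf $\sE$ or the final isomorphism of normal varieties. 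Once flatness and the base-change statements are in hand, the rest is the same argument as in the smooth case.
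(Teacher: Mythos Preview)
Your approach diverges from the paper's, and the gap you yourself flag is real and not closed by the workaround you sketch. The paper does not attempt to prove flatness directly; instead it quotes \cite[Proposition~3.1]{hoering} to show that \emph{every} fibre of $p$ is irreducible and generically reduced with normalization $\bP^k$, and then invokes \cite[Theorem~12]{kollar_husks} to conclude that $p$ is a $\bP^k$-bundle. The whole argument is two sentences of citations.

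In your proposal, two steps do not go through as written. First, miracle flatness requires a Cohen--Macaulay source and a regular target, while here $X$ and $Y$ are only normal. Restricting to the smooth locus of $Y$ and the CM locus of $X$ does give flatness there, but you then need to extend the conclusion across the bad locus, and ``codimension $\ge 2$ plus reflexivity'' is not enough: you have not shown that $p_*\sL$ is reflexive (torsion-free, yes), nor that a $Y$-isomorphism of normal varieties over an open $U\subset Y$ with $\codim(Y\setminus U)\ge 2$ extends over all of $Y$; in general it does not without additional control of the fibres over $Y\setminus U$. Second, your claim that $p^*p_*\sL\to\sL$ is surjective because ``on every fibre $\sL_{|X_y}=\sO_{\bP^k}(1)$'' is circular: this is exactly what you are trying to prove, and the hypothesis only gives it for general $y$. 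Even granting flatness, you would still need an argument (e.g.\ via constancy of the Hilbert polynomial and a characterization of $(\bP^k,\sO(1))$ among polarized schemes with $\Delta$-genus $0$) to identify the special fibres before you can use global generation there. That missing identification of special fibres is precisely what H\"oring's result supplies in the paper's proof.
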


\begin{proof}
By \cite[Proposition 3.1]{hoering}, all fibers of $p$ are irreducible and generically reduced.
Moreover, the normalization of any fiber is isomorphic to $\bP^k$, and $\sL_{|\bP^k}\simeq \sO_{\bP^k}(1)$. 
Apply \cite[Theorem 12]{kollar_husks} to conclude that $p:X \to Y$ is a 
$\bP^k$-bundle. 
\end{proof}


\begin{proof}[{Proof of Theorem \ref{thm:codimension_r_Fano_index__equal_rank_picard_number_1}}]
Let $X$ be an $n$-dimensional $\bQ$-factorial klt projective variety with $n\ge 2$, and
$\sF\subsetneq T_X$ a $\bQ$-Fano foliation of rank $r$.

\smallskip

Suppose first  $\rho(X)=1$.
By  Corollary \ref{cor:fano_foliation_picard_number_one}, 
$X$ is a klt  $\bQ$-Fano variety. By Lemma~\ref{lemma:Fano_lin_equ_and_num_equ}, 
$\Pic(X) \simeq \bZ$.
Corollary \ref{cor:ko_alg_int} then implies that  $i_{\sF}\le r$.

Suppose that $i_{\sF}= r$, and let $\sL$ be an ample line bundle on $X$ such that $-K_\sF\sim_\bQ rc_1(\sL)$. 
By Corollary \ref{cor:ko_alg_int}, $\sF$ is algebraically integrable, and the  general log leaf of $\sF$ satisfies 
$(\tilde F,\tilde \Delta)\simeq (\bP^r, H)$, where $H$ is a hyperplane in $\bP^r$.
Let $\tilde e:\tilde F\to X$ denote the natural morphism. 
Then $\tilde e^*\sL\simeq\sO_{\bP^r}(1)$.

Let $W$ be the normalization of the closure in $\Chow(X)$ of the subvariety parametrizing 
general leaves of $\sF$, and  $U$ the normalization of the universal cycle over $W$,
with universal family morphisms:

\centerline{
\xymatrix{
U \ar[r]^{e}\ar[d]_{\pi} & X \ . \\
 W &
}
}

\noindent The line bundle $e^*\sL$ is $\pi$-ample and restricts to $\sO_{\bP^r}(1)$ on a general fiber $U_w\simeq \bP^r$ of $\pi$.
By Proposition \ref{prop:p_bundle},  $\pi$ is a $\bP^r$-bundle. 
More precisely, setting $\sE:=\pi_*e ^*\sL$, $U$ is isormorphic to 
$\mathbb{P}_W(\sE)$ over $W$, and under this isomorphism  $e^*\sL$ 
corresponds to the tautological line bundle $\sO_{\mathbb{P}_W(\sE)}(1)$.

We will show that the above diagram realizes $X$ as a normal generalized cone over $\big(W, \det(\sE)\big)$. 

By Remark \ref{remark:definition_log_leaf}, there is a canonically defined $\bQ$-Weil divisor $B$ on $U$ such that 
$K_{U/W}+B\sim_\bQ e^*K_\sF$. Notice that since 
$K_{U/W}$ and $e^*K_\sF$ are $\mathbb{Q}$-Cartier divisors, hence so is $B$.
Since $\tilde \Delta$ is integral, the support of $B$ has a unique irreducible component $B'$  
that dominates $W$. Moreover, $B'$ appears with coefficient $1$ in $B$. 
Let $E$ be the exceptional locus of $e$. Note that $E$ has pure codimension 1 since $X$ is $\bQ$-factorial. 
We shall show that $B=B'=E$.

Let $m$ be a positive integer such that $mB$ is integral.
Recall that 
$\sO_U(K_{U/W})\simeq \sO_{\mathbb{P}_W(\sE)}(-r-1)\otimes \pi^*\det(\sE)$, and so
$m\,B\in |\sO_U(m)\otimes \pi^*\det(\sE)^{\otimes -m}|$.
Let $C$ be a smooth complete curve, and $C \to W$  a non-constant morphism
whose image is not contained in the image of any irreducible component of 
$\textup{Supp}(B)$ distinct from $B'$. 
Let $U_C$ be the normalization of $C \times _W U$, with induced
morphisms $\pi_C:U_C\to C$ and $e_C:U_C\to X$, and 
set $\sE_C:={\pi_C}_*{e_C}^*{\sL}$.
Then $\sE_C$ is nef, 
$-(K_{U_C/C}+B_{|U_C})\sim_\bQ {e_C}^*(-K_{\sF})$, and 
$m\,B_{|U_C}\in |\sO_{U_C}(m)\otimes \pi_C^*\det(\sE_C)^{\otimes -m}|$.
Suppose that $B_{|U_C}$ is reducible, and write
$B_{|U_C}=B'_{|U_C}+\sum_{1\le i\le k}a_i F_i$, where the $F_i$'s are fibers of $\pi_C$ and the  $a_i$'s are positive rational numbers.
Let $t \gg0$ be such that $t\,a_i\in\bZ$ for all $1 \le i\le k$.
Then 
$mt\,B'_{|U_C}\in |\sO_{U_C}(mt)\otimes \pi_C^*\det(\sE_C)^{\otimes -mt}\otimes\sO_{U_C}(-mt\sum_{1\le i\le k}a_i F_i)|$, and  
thus 
$h^0(C,S^{mt}\sE_C\otimes\det(\sE_C)^{\otimes -mt}\otimes\sO_C(-mt\sum_{1\le i\le k}a_i\,c_i))\neq 0$, 
where $c_i=\pi_C(F_i)\in C$. 
But this contradicts Lemma \ref{lemma:vanishing} below.
We conclude that $B_{|U_C}$ is  irreducible,  and thus $B=B'$.
To show that $E=B$,  
let $\tilde C\subset U_C$ be a curve such that $\pi_C(\tilde C)=C$, and suppose that 
$e_C$ maps $\tilde C$ to a point.
Then 
$B \cdot \tilde C = -K_{U_C/C} \cdot \tilde C
= -\pi_C^*\det(\sE_C) \cdot \tilde C$.
Since $\sL$ is ample, the later is $<0$ again by Lemma~\ref{lemma:vanishing}. 
Hence, $ \tilde C \subset \textup{Supp}(B)$.
We conclude that $E=B$. 
This argument also shows that
$B \to W$ is a $\bP^{r-1}$-bundle.

Next we observe that $W$ is $\bQ$-factorial with Picard number $\rho(W)=1$.
Indeed, since $E$ is irreducible and $X$ is $\bQ$-factorial with Picard number $\rho(X)=1$, 
we have $\dim(\textup{Pic}(U)\otimes\bQ)\le \dim(\textup{Cl}(U)\otimes\bQ)=2$.
On the other hand, $e$ is not finite, and thus $\dim(\textup{Pic}(U)\otimes\bQ)\ge 2$.
Our claim follows.

Set $\sG:={(\pi_{|B})}_*{(e^*\sL)}_{|B}$, and notice that it is a nef vector bundle on $W$.
The inclusion $B\subset U$ corresponds to $\sE\twoheadrightarrow \sG$.
Since $B\in |\sO_U(1)\otimes \pi^*\det(\sE)^*|$,
we have an exact sequence
$$0\to \det(\sE) \to \sE \to \sG \to 0,$$
and $\det(\sG)\simeq \sO_W$.
We claim that $\sG\simeq\sO_W^{\oplus r}$.
By replacing $W$ with a resolution of singularities, 
and applying the projection formula, 
we may assume that $W$ is smooth.

By Corollary \ref{cor:fano_foliation_picard_number_one}, 
$X$ is a $\bQ$-Fano variety, and hence 
rationally connected by Theorem~\ref{thm:rc}.
Thus $W$ is rationally  connected, hence simply connected 
by \cite[Corollary 4.18]{debarre}.
Our claim then  follows  from \cite[Theorem 1.18]{demailly_peternell_schneider94}.

Since $W$ is a $\bQ$-Fano variety
with klt singularities and Picard number $\rho(W)=1$,
$\det(\sE)$ is ample, and 
$h^1(W,\det(\sE))=0$ (see \cite[Theorem 1.2.5]{kmm}).
Hence $\sE\simeq \det(\sE)\oplus \sO_W^{\oplus r}$, and
$X$ is a normal generalized cone over
$(W,\sM)$, where $\sM=\det(\sE)$.

For the last statement, observe that 
the quotient $\sE\twoheadrightarrow \sM$ corresponds to a section $\sigma$ of $\pi$, contained in $U \setminus E\simeq X\setminus e(E)\subset X$.
Moreover,  $\sL_{|\sigma(W)}\simeq\sM$ and $N_{\sigma(W)/X} \simeq \sM^{\oplus r}$. Thus 
$\omega_{\sigma(W)}\simeq {\omega_X}_{|\sigma(W)}\otimes 
\sM^{\otimes r}$, and $i_W=k(i_X-r)\ge 1$ for some positive integer $k$. This completes the proof.

\smallskip

Suppose now that $\sF$ has rank $r=n-1$.
Then apply Proposition \ref{prop:ko_picard_number_at_least_2} and reduce to the previous case. 
\end{proof}

\begin{lemma}\label{lemma:vanishing}
Let $C$ be a smooth complete curve, and $\sG$ a vector bundle of rank $r\ge 1$ on $C$.

\begin{enumerate}
\item If $\sG$ is nef and $\deg(\sG)=0$, then $h^0(C,\sG)\le r$.
\item If $\sG$ is nef, then $h^0(C,S^{m}(\sG)\otimes\det(\sG)^{\otimes -m}\otimes \sM^{\otimes -1})=0$ for any integer $m\ge 1$, 
and any line bundle $\sM$ on $C$ with $\deg(\sM)>0$.
\item Let $\sO_{\bP}(1)$ denote the tautological line bundle on $\mathbb{P}_C(\sG)$.
If $\sO_{\bP}(1)$ is nef and big, then $\deg(\sG)>0$.
\end{enumerate}
\end{lemma}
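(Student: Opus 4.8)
I would prove the three parts in turn; they are largely independent. I would begin with \textbf{(3)}, which is immediate. On $P:=\bP_C(\sG)$ (Grothendieck convention), the defining relation $\sum_{i=0}^{r}(-1)^i\pi^*c_i(\sG)\cdot\sO_{\bP}(1)^{r-i}=0$ together with $c_i(\sG)=0$ for $i\ge 2$ (as $C$ is a curve) and $\sO_{\bP}(1)^{r-1}\cdot\pi^*[\mathrm{pt}]=1$ gives $\sO_{\bP}(1)^r=\deg(\sG)$. If $\sO_{\bP}(1)$ is nef and big, then $\sO_{\bP}(1)^r>0$ by the standard numerical criterion for bigness of a nef divisor on a projective variety, so $\deg(\sG)>0$.

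Next \textbf{(1)}. The key observation is that a nef vector bundle of degree $0$ on a smooth curve is semistable: any quotient bundle is nef, hence has nonnegative degree, so slope $\ge 0=\mu(\sG)$. Now let $V:=H^0(C,\sG)$ and let $\sH\subseteq\sG$ be the image of the evaluation map $V\otimes\sO_C\to\sG$, of some rank $s\le r$. Then $\sH$ is globally generated, so $\det\sH$ is globally generated and $\deg(\sH)\ge 0$; on the other hand $\sH\subseteq\sG$ forces $\deg(\sH)\le 0$ by semistability, so $\deg(\sH)=0$. A globally generated bundle of degree $0$ on $C$ is trivial (picking $s$ general sections yields an injection $\sO_C^{\oplus s}\hookrightarrow\sH$ whose cokernel has length $0$, hence is an isomorphism), so $\sH\simeq\sO_C^{\oplus s}$. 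Since each global section of $\sG$ factors through $\sH$, we conclude $h^0(C,\sG)\le h^0(C,\sH)=s\le r$.

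Finally \textbf{(2)}, which is the main point. Suppose $h^0\big(C,S^m\sG\otimes\det(\sG)^{\otimes -m}\otimes\sM^{\otimes -1}\big)\neq 0$; this is a nonzero, hence injective, map $\sM\otimes\det(\sG)^{\otimes m}\hookrightarrow S^m\sG=\pi_*\sO_{\bP}(m)$ on $C$. Pulling it back to $P=\bP_C(\sG)$ and composing with the evaluation $\pi^*\pi_*\sO_{\bP}(m)\to\sO_{\bP}(m)$ produces a nonzero map $\pi^*\big(\sM\otimes\det(\sG)^{\otimes m}\big)\to\sO_{\bP}(m)$ (nonzero because over the generic fiber it is evaluation of a nonzero section of $\sO_{\bP^{r-1}}(m)$), i.e. an effective divisor $D$ with $\sO_{\bP}(m)\sim D+\pi^*\big(\sM\otimes\det(\sG)^{\otimes m}\big)$. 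Since $\sG$ is nef, $\sO_{\bP}(1)$ is nef, so $\sO_{\bP}(1)^{r-1}\cdot D\ge 0$; intersecting the displayed relation with $\sO_{\bP}(1)^{r-1}$ and using $\sO_{\bP}(1)^{r-1}\cdot\sO_{\bP}(m)=m\deg(\sG)$ together with $\sO_{\bP}(1)^{r-1}\cdot\pi^*(\text{line bundle of degree }d)=d$ yields $m\deg(\sG)\ge \deg(\sM)+m\deg(\sG)$, i.e. $\deg(\sM)\le 0$, contradicting $\deg(\sM)>0$. The only delicate point in the whole lemma is the bookkeeping here: checking that the pulled-back map is nonzero and that the intersection numbers on $\bP_C(\sG)$ are as claimed, so that the sole positivity input needed is the inequality $\sO_{\bP}(1)^{r-1}\cdot D\ge 0$ for effective $D$, valid because $\sO_{\bP}(1)$ is nef. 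Parts (1) and (3) I expect to be routine.
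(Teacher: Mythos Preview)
Your proof is correct in all three parts, but your approach to (2) is genuinely different from the paper's.

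For (2), the paper works entirely on the curve via stability theory: a nonzero section gives an injection $\det(\sG)^{\otimes m}\otimes\sM\hookrightarrow S^m\sG$, and since the slope of the source exceeds $\mu(S^m\sG)=m\mu(\sG)$, the bundle $\sG$ is not semistable (using that in characteristic~0 symmetric powers of semistable bundles are semistable). Taking the maximal destabilizing subbundle $\sH\subset\sG$ of rank $k$, one has $\mu(\sH)\ge r\mu(\sG)+\frac{1}{m}\deg(\sM)$; since the quotient $\sG/\sH$ is nef, a short degree count gives $\deg(\sG)\ge\deg(\sG)+\frac{k}{m}\deg(\sM)$, a contradiction. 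Your route via the projective bundle is more geometric: you translate the section into an effective divisor $D$ on $\bP_C(\sG)$ and derive the contradiction from the single inequality $\sO_{\bP}(1)^{r-1}\cdot D\ge 0$. This avoids invoking the (characteristic~0) fact that semistability is preserved under symmetric powers, at the cost of a small intersection-theoretic computation; both arguments use nefness of $\sG$ in exactly one place.

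For (1), the paper does a direct induction: a nonzero section of a nef degree-$0$ bundle is nowhere vanishing, so $\sO_C$ is a subbundle and one applies induction to the quotient. Your argument via semistability and the image of the evaluation map is equivalent but packages the induction into the statement ``globally generated of degree $0$ implies trivial''. For (3), the paper deduces it from (1) (nef with $\deg(\sG)=0$ would force $h^0(S^m\sG)$ to grow only like $m^{r-1}$, contradicting bigness), whereas your direct computation $\sO_{\bP}(1)^r=\deg(\sG)$ is cleaner.
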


\begin{proof}

We prove (1) by induction on $r$. The result is clear if $r=1$. 
So we assume that  $r\ge 2$. We may assume that there exists a nonzero section $s\in H^0(C,\sG)$,
which induces a nonzero map  $\sO_C\hookrightarrow \sG$.
Since $\sG$ is nef and $\deg(\sG)=0$, $\sO_C$ is in fact a subbundle of $\sG$.
So $\sQ=\sG/\sO_C$ is a nef vector bundle on $C$, and $\deg(\sQ)=0$. 
By induction, 
$h^0(C,\sQ)\le r-1$. Our claim follows.

To prove (2), 
we argue by contradiction. Suppose that
$h^0(C,S^{m}(\sG)\otimes\det(\sG)^{\otimes -m}\otimes\sM^{\otimes -1})\neq 0$ for some $m\ge 1$, and 
some line bundle $\sM$ on $C$ with $\deg(\sM)>0$.
Then there is a nonzero map $\det(\sG)^{\otimes m}\otimes\sM \hookrightarrow S^{m}(\sG)$. 
Since $\mu(\det(\sG)^{\otimes m}\otimes\sM)=mr \mu(\sG)+\mu(\sM)> m \mu(\sG)=\mu(S^{m}(\sG))$, 
$\sG$ is not semistable, and 
there exists a subbundle $\sH\subset \sG$ of rank $k\geq 1$ such that 
$\mu(\sH) \ge r\mu(\sG)+\frac{1}{m}\mu(\sM)$. 
The quotient $\sQ:=\sG/\sH$ is a nef vector bundle, and thus
$$\deg(\sG)=\deg(\sH)+\deg(\sQ)\ge \deg(\sH)\ge k \deg(\sG)+\frac{k}{m}\deg(\sM)\ge \deg(\sG)+\frac{k}{m}\deg(\sM),$$
a contradiction.

Assertion (3) is an immediate consequence of (1).
\end{proof}


\section{Codimension 1 del Pezzo foliations on varieties with mild singularities}\label{section:dp}

In this section we prove Theorem \ref{thm:classification_del_pezzo}.
The proof naturally splits in two parts: the case of Picard number $1$, and
Picard number $>1$. 
First we treat the case of Picard number $1$. 
We start by proving algebraic integrability.

\begin{prop}\label{prop:codimension_one_del_pezzo_algebraic_leaves}
Let $X$ be a $\bQ$-factorial klt projective variety
such that $\rho(X)=1$, and
let $\sF\subsetneq T_X$
be a del Pezzo foliation.
Then either 
\begin{enumerate}
\item $\sF$ is algebraically integrable with rationally connected general leaves, or
\item $X$ is a generalized cone over a projective normal variety $Z$ with $\rho(Z)=1$, 
and  $\sF$ is the pullback by $X\dashrightarrow Z$
of a foliation induced by a nonzero global section of $T_Z$.
Moreover, $Z$ has $\bQ$-factorial klt singularities,  and
$i_Z=k(i_X-r+1)>0$ for some positive integer $k$.
\end{enumerate} 
\end{prop}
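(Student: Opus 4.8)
The plan is to realise $X$ as a generalized cone by producing, via Lemma~\ref{lemma:foliation_max}, an algebraically integrable subfoliation $\sG\subsetneq\sF$ of corank $1$ whose index equals its rank, feeding $\sG$ into Theorem~\ref{thm:codimension_r_Fano_index__equal_rank_picard_number_1}, and then descending $\sF$ itself to the base. To set up, by Corollary~\ref{cor:fano_foliation_picard_number_one} the variety $X$ is a klt $\bQ$-Fano variety, so $\Pic(X)\simeq\bZ$ by Lemma~\ref{lemma:Fano_lin_equ_and_num_equ}; let $\sL$ be the ample generator and write $r\ge 2$ for the rank of $\sF$. The del Pezzo hypothesis forces $-K_\sF\sim_\bQ(r-1)\,c_1(\sL)$ (as $\sL$ generates $\Pic(X)$), so in particular $\mu_\sL(\sF)>0$. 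Applying Lemma~\ref{lemma:foliation_max} to a general complete intersection curve $C\subset X$, either $\sF$ is algebraically integrable with rationally connected general leaves --- this is alternative~(1) --- or there is an algebraically integrable subfoliation $\sG\subsetneq\sF$ of some rank $s\le r-1$, with rationally connected general leaf, such that $\det(\sG)\cdot C\ge\det(\sF)\cdot C$. Assume from now on that we are in this second case.

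Next I would pin down $\sG$. Since $\rho(X)=1$ the class $c_1(\det\sG)$ is a rational multiple of $c_1(\sL)$, and it is a positive one because $\det(\sG)\cdot C\ge\det(\sF)\cdot C=(r-1)\,c_1(\sL)\cdot C>0$; clearing denominators and invoking Lemma~\ref{lemma:Fano_lin_equ_and_num_equ} promotes this to $-K_\sG\sim_\bQ\lambda\,c_1(\sL)$ with $\lambda\ge r-1$. Thus $\sG$ is an algebraically integrable $\bQ$-Fano foliation, and running the argument of Proposition~\ref{prop:ko_leaves} for $\sG$ (Theorem~\ref{thm:ko} applied to its general log leaf, together with Lemma~\ref{lemma:proj_klt} and Proposition~\ref{prop:foliation_algebraically_integrable_klt_not_weak_fano}) gives $\lambda\le s$. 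Combined with $\lambda\ge r-1\ge s$, this forces $s=r-1$ and $\lambda=r-1$, so $\sG$ is a $\bQ$-Fano foliation of rank $r-1$ whose index equals its rank. Now Theorem~\ref{thm:codimension_r_Fano_index__equal_rank_picard_number_1} applies to $\sG$: it yields that $X$ is a normal generalized cone over a polarized variety $(Z,\sM)$ with vertex $\simeq\bP^{r-2}$, that $\sG$ is the foliation induced by the natural rational map $p:X\map Z$, and that $Z$ is $\bQ$-factorial and klt with $\rho(Z)=1$ and $i_Z=k\,(i_X-r+1)>0$ for some positive integer $k$. This is exactly the ``moreover'' part of~(2), and it remains only to describe $\sF$.

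For that, let $X_0\subset X$ be the complement of the vertex; this is the locus where $p$ restricts to a morphism, its complement has codimension $\ge 2$, and over it $\sG$ coincides with the relative tangent sheaf $T_{X_0/Z}$ of the (affine-bundle) projection $p_{|X_0}$. A foliation containing the relative tangent sheaf of a fibration descends, so $\sF=p^{-1}\sF_Z$ for a saturated rank $1$ foliation $\sF_Z\subset T_Z$, and it suffices to check that $\sO_Z(-K_{\sF_Z})\simeq\sO_Z$, so that $\sF_Z\into T_Z$ is a nonzero global vector field. I would do this on $Y=\bP_Z(\sM\oplus\sO_Z^{\oplus(r-1)})$, with projection $\pi:Y\to Z$ and contraction $e:Y\to X$ as in Definition~\ref{Normal generalized cones}: there $\sG$ lifts to $T_{Y/Z}$ and $\sF$ lifts to $\pi^{-1}\sF_Z$, so $\det$ of the lift of $\sF$ is $\sO_Y(-K_{Y/Z})\otimes\pi^{*}\sO_Z(-K_{\sF_Z})$. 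Comparing this with $e^{[*]}\det\sF$ (using $-K_\sF\sim_\bQ(r-1)c_1(\sL)$ and $e^{*}c_1(\sL)=c_1(\sO_Y(1))$, the relative Euler sequence $-K_{Y/Z}=r\,c_1(\sO_Y(1))-\pi^{*}c_1(\sM)$, and the identity $E=c_1(\sO_Y(1))-\pi^{*}c_1(\sM)$ for the $e$-exceptional divisor), and splitting the divisor class group of the $\bP$-bundle $Y$ as $\bZ[\sO_Y(1)]\oplus\pi^{*}\textup{Cl}(Z)$, one reads off that $e^{[*]}\det\sF$ differs from the lift by exactly $E$ and that $-K_{\sF_Z}\sim 0$ on $Z$. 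Hence $\sF$ is the pullback via $p$ of the foliation induced by a nonzero global section of $T_Z$, which completes~(2).

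I expect the delicate point to be the index bookkeeping in the second paragraph: squeezing out of the single numerical inequality $\det(\sG)\cdot C\ge\det(\sF)\cdot C$ the rigid conclusion that $\sG$ has corank exactly $1$ in $\sF$ and index exactly equal to its rank --- this is where one really uses $\rho(X)=1$ (to pass from numerical to $\bQ$-linear equivalence) together with the sharp form of Proposition~\ref{prop:ko_leaves}. The subsequent determinant computation on $Y$ that nails down $\sO_Z(-K_{\sF_Z})$, in particular upgrading a numerical triviality to an honest isomorphism of sheaves, is the other step that will require care.
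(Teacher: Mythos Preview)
Your argument follows the paper's proof essentially step for step: apply Lemma~\ref{lemma:foliation_max}, use the index bound of Proposition~\ref{prop:ko_leaves} to force $s=i_\sG=r-1$, and then feed $\sG$ into Theorem~\ref{thm:codimension_r_Fano_index__equal_rank_picard_number_1} to get the cone structure together with the properties of $Z$. The only difference is the last step: the paper simply cites \cite[Lemma~6.7]{fano_foliation} to conclude that $\sF$ is the pullback of a foliation induced by a global vector field on $Z$, whereas you sketch a direct determinant computation on $Y=\bP_Z(\sM\oplus\sO_Z^{\oplus(r-1)})$. Your sketch is correct in outline, and you have rightly flagged the one point that needs care, namely passing from $-K_{\sF_Z}\sim_\bQ 0$ to $\sO_Z(-K_{\sF_Z})\simeq\sO_Z$; this is not automatic, since $X$ is only $\bQ$-factorial, so that $-K_\sF$ need not be $\bZ$-linearly equivalent to $(r-1)c_1(\sL)$.
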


\begin{proof}
Let $\sL$ be an ample line bundle on $X$ such that $-K_\sF\sim_\bQ (r-1)c_1(\sL)$.
Then $\textup{Pic}(X)=\bZ[\sL]$ by Lemma \ref{lemma:Fano_lin_equ_and_num_equ}.

Suppose that $\sF$ is not algebraically integrable with rationally connected general leaves.
By Lemma \ref{lemma:foliation_max}, there exists a foliation with algebraic leaves
$\sG\subsetneq \sF$ of rank $s \le r-1$ and index $i_{\sG}\ge r-1$.
By Proposition~\ref{prop:ko_leaves}, we must have $s=i_{\sG}=r-1$.
By Theorem \ref{thm:codimension_r_Fano_index__equal_rank_picard_number_1},
$X$ is a generalized cone as described in (2).
The fact that $\sF$ is the pullback by $X\dashrightarrow Z$
of a foliation induced by a nonzero global section of $T_Z$ follows from \cite[Lemma 6.7]{fano_foliation}.
\end{proof}

\begin{thm}\label{thm:codimension_1_del_pezzo_picard_number_1}
Let $X$ be a factorial klt projective variety of dimension $n\ge 3$ and Picard number $\rho(X)=1$.
Suppose that $X$ admits a codimension $1$ del Pezzo foliation $\sF\subsetneq T_X$.
Then one of the following holds.
\begin{enumerate}
\item $\sF$ is a degree $1$ foliation on $X\simeq\bP^n$.
\item $X$ is isomorphic to a (possibly singular) quadric hypersurface $Q_n\subset \bP^{n+1}$, and $\sF$ is a pencil of hyperplane sections of $Q_n$.
\item $X$ is a normal generalized cone over a projective normal surface $Z$ with $\rho(Z)=1$, and 
$\sF$ is the pullback by $X\dashrightarrow Z$
of a foliation induced by a nonzero global section of $T_Z$.
Moreover, $Z$ is factorial and klt, $i_X>n-2$ and
$i_Z=k(i_X -n+2)$ for some positive integer $k$.
\end{enumerate}
\end{thm}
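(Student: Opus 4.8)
The plan is to split along the dichotomy of Proposition~\ref{prop:codimension_one_del_pezzo_algebraic_leaves}. If $\sF$ is not algebraically integrable with rationally connected general leaves, then case (2) of that proposition puts us in alternative (3) of the theorem, with $Z$ a normal surface of Picard number $1$; since $X$ is factorial, Remark~\ref{rem:Normal generalized cones} gives that $Z$ is factorial, and the klt property plus the relation $i_Z=k(i_X-n+2)$ with $i_X>n-2$ come directly from Proposition~\ref{prop:codimension_one_del_pezzo_algebraic_leaves} (here $r=n-1$, so $r-1=n-2$). So the substance is the algebraically integrable case, which I expect occupies the bulk of the argument.

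Assume now $\sF$ is algebraically integrable with rationally connected general leaves. By Lemma~\ref{lemma:Fano_lin_equ_and_num_equ}, $\Pic(X)\simeq\bZ[\sL]$ with $c_1(\det(\sF))\sim_\bZ (n-2)c_1(\sL)$, and by Corollary~\ref{cor:fano_foliation_picard_number_one} $X$ is a klt $\bQ$-Fano variety. I would analyze the general log leaf $(\tilde F,\tilde\Delta)$, which has dimension $n-1$ and satisfies $-(K_{\tilde F}+\tilde\Delta)\sim_\bQ (n-2)c_1(\tilde e^*\sL)$, i.e. index $n-2=\dim\tilde F-1$ relative to the polarization $\tilde e^*\sL$. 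By Proposition~\ref{prop:foliation_algebraically_integrable_klt_not_weak_fano}, $(\tilde F,\tilde\Delta)$ is \emph{not} log terminal, so $\tilde\Delta\neq 0$ and the pair is on the boundary of being klt. The strategy is to run the family-of-leaves picture $\pi:U\to W$, $e:U\to X$ as in Theorem~\ref{thm:codimension_r_Fano_index__equal_rank_picard_number_1}: since the general leaf is rationally connected of dimension $n-1$ with a polarization of index $n-2$, the classification of klt del Pezzo pairs (or direct analysis via Theorem~\ref{thm:ko} applied to the underlying del Pezzo \emph{variety} obtained after accounting for $\tilde\Delta$) should force $(\tilde F, \tilde e^*\sL)$ to be either $(\bP^{n-1},\sO(1))$ with $\tilde\Delta$ a hyperplane — leading to $X\simeq\bP^n$ and a degree $1$ foliation — or a quadric $Q_{n-1}$ with $\tilde\Delta$ a hyperplane section, leading to $X\simeq Q_n$ and $\sF$ a pencil of hyperplane sections. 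The base $W$ being rational (via rational connectedness of $X$ and $\rho(X)=1$) and one-dimensional is what pins down the global structure; here one either gets $W\simeq\bP^1$ with $X$ a $\bP^{n-1}$-bundle-like degeneration whose Picard number $1$ forces it to be a cone, hence $\bP^n$ or $Q_n$, or $W$ a point and $\sF$ has leaves of codimension $1$ forming a pencil directly.

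Concretely, I would proceed: (i) reduce to the algebraically integrable case as above; (ii) identify the general log leaf via Theorem~\ref{thm:ko} — since $-(K_{\tilde F}+\tilde\Delta)$ has index $\dim\tilde F-1$ but $(\tilde F,\tilde\Delta)$ fails klt, use Lemma~\ref{lemma:proj_klt} to see that if $\tilde F\simeq\bP^{n-1}$ then $\tilde\Delta$ must be a hyperplane, and rule out / incorporate the quadric case using that $\tilde\Delta=0$ forces $(\tilde F,\tilde e^*\sL)$ to be a del Pezzo variety which by Proposition~\ref{prop:foliation_algebraically_integrable_klt_not_weak_fano} cannot have log-terminal log leaf, so in fact $\tilde\Delta=0$ is impossible when $\tilde F$ is a quadric — meaning the quadric case arises from $\tilde\Delta$ a hyperplane section on $Q_{n-1}$, i.e. $\tilde F\cap\tilde\Delta$ itself has a del Pezzo structure; (iii) build $\pi:U\to W$ and show $W$ is rational of dimension $\le 1$; (iv) in the codimension-$1$-leaf subcase ($\dim W=0$ is excluded since leaves have codimension $1$, so $\dim W=1$) show $W\simeq\bP^1$, that $\pi$ is generically a $\bP^{n-1}$-bundle, and that the exceptional/boundary structure together with $\rho(X)=1$ forces $X$ to be $\bP^n$ (when leaves are linear $\bP^{n-1}$'s, recovering the degree $1$ foliation description from \cite[Theorem 6.2]{lpt3fold}) or $Q_n$ (when leaves are quadrics $Q_{n-1}$, i.e. hyperplane sections, and the pencil structure is immediate).

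The main obstacle will be step (iii)--(iv): extracting the global geometry of $X$ from the family of leaves when $\tilde\Delta\neq 0$. In Theorem~\ref{thm:codimension_r_Fano_index__equal_rank_picard_number_1} the leaves were $(\bP^r,H)$ and the boundary divisor $B\subset U$ was shown to equal the $e$-exceptional locus, yielding a normal generalized cone; here the leaves are one dimension smaller (del Pezzo, not linear) and $\tilde\Delta$ is a hyperplane or hyperplane section, so $B$ will not simply be contractible, and one must instead recognize $U\to W\simeq\bP^1$ together with the divisor $B$ as a degeneration whose total space, being $\bQ$-factorial of Picard number $1$, is forced into the short list $\{\bP^n, Q_n\}$. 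I expect this to require a careful use of the semiample/vanishing technology of Lemma~\ref{lemma:vanishing} applied on $\bP^1$ to constrain the bundle $\sE=\pi_*e^*\sL$, analogous to the $\rho\ge 2$ argument but with the extra input that $\rho(X)=1$ collapses the possibilities. The smooth-case statement \cite[Proposition 3.7]{lpt3fold} is the template, and the task is to push each step through klt/factorial singularities using the reflexive-sheaf formalism and Theorem~\ref{thm:main} already established.
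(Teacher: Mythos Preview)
Your reduction to the algebraically integrable case via Proposition~\ref{prop:codimension_one_del_pezzo_algebraic_leaves} is correct, but your plan for that case has a genuine gap at step (ii): Theorem~\ref{thm:ko} only classifies pairs with index $\ge \dim$, not $\dim - 1$. Here the log leaf $(\tilde F,\tilde\Delta)$ has $-(K_{\tilde F}+\tilde\Delta)\sim_\bQ (n-2)\,c_1(\tilde e^*\sL)$ with $\dim\tilde F=n-1$, so you are squarely in the del Pezzo range, where the available structure theory only says $\deg\le 9$ (Proposition~\ref{prop:del_pezzo_degree}). There is no mechanism in the paper to force $\tilde F$ to be $\bP^{n-1}$ or $Q_{n-1}$; a priori it could be a cubic, a $(2,2)$-intersection, a degree~$5$ del Pezzo, etc. Likewise Lemma~\ref{lemma:proj_klt} does not help: if $\tilde F\simeq\bP^{n-1}$ then $\deg\tilde\Delta=2$, outside its range. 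So the leaf-first strategy cannot get off the ground with the tools at hand, and your later steps (iii)--(iv) inherit this problem.

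The paper's route is quite different and bypasses the log leaf entirely. Once $\sF$ is algebraically integrable, take the rational first integral $\pi:X\dashrightarrow W\simeq\bP^1$. Lemma~\ref{lemma:codimension_one_first_integral} (via Lemma~\ref{lemma:invariant_divisor_zero_first_chern_class}) gives a dichotomy you do not invoke: either every fibre of $\pi$ is irreducible, or there is a codimension~$2$ subfoliation $\sG\subsetneq\sF$ with $K_\sG\le K_\sF$, hence $i_\sG\ge n-2$, and Theorem~\ref{thm:codimension_r_Fano_index__equal_rank_picard_number_1} again yields case~(3). In the irreducible-fibre case one records the multiple fibres $m_iF_i$, writes $\sO_X(F_i)\simeq\sL^{\otimes k_i}$, and uses the ramification formula $\det(N_\sF)\simeq\pi^*\sO_{\bP^1}(2)\otimes\sO_X(-R(\pi))$ to obtain
\[
i_X \;=\; n-2 + k_1 + k_2 - \sum_{i\ge 3} k_i(m_i-1).
\]
Combined with $i_X>i_\sF=n-2$ (Corollary~\ref{cor:index}) this pins down $i_X$, and now Theorem~\ref{thm:ko} is applied to $X$ itself (where the index \emph{is} high enough): $i_X\ge n+1$ gives $\bP^n$; $i_X=n$ gives $Q_n$, and simple connectedness of $Q_n\setminus\textup{Sing}(Q_n)$ forces $s=0$, $k_1=k_2=1$, i.e.\ a pencil of hyperplane sections; $i_X=n-1$ is eliminated by constructing an orbifold cover $c':X'\to X$ \'etale in codimension one (existence of the base cover $c:C\to\bP^1$ with prescribed ramification uses either the $s\le 2$ numerology or \cite[Lemma 6.1]{kobayashi_ochiai82}), observing $(X',\sL')$ is again a normal del Pezzo variety, and deriving ${\sL'}^n=\deg(c)\cdot\sL^n\ge 12$ from Lemma~\ref{lemma:vanishing_del_pezzo}, which contradicts Proposition~\ref{prop:del_pezzo_degree}. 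None of these ingredients---the irreducible-fibre reduction, the canonical-bundle/ramification computation, or the orbifold cover plus del Pezzo degree bound---appear in your outline.
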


The proof of Theorem~\ref{thm:codimension_1_del_pezzo_picard_number_1} follows  the line of argumentation of \cite{lpt3fold}.
We will use the following results, which follow from [{\it loc.cit.}]. We provide proofs for the reader's convenience.

\begin{lemma}[see {\cite[Lemma 3.1]{lpt3fold} or \cite{ghys00}}]\label{lemma:invariant_divisor_zero_first_chern_class}
Let 
$X$ be a smooth variety of dimension $\ge 3$, and  $\sF\subsetneq T_X$  a codimension one foliation on $X$. Suppose that there exists
a nonzero $\bQ$-divisor $D$ on $X$ such that $D\sim_{\bQ} 0$
and such that $\textup{Supp}(D)$ is invariant by $\sF$.
Then either $\sF$ is defined by a logarithmic $1$-form with poles exactly along $\textup{Supp}(D)$, or
there exists a codimension 2 foliation $\sG\subsetneq \sF$ with 
$K_\sG\le K_\sF$.
\end{lemma}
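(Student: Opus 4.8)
The plan is to adapt the classical argument of Ghys (cf.\ \cite[Lemma 3.1]{lpt3fold}). Write $D=\sum_{i=1}^{k}a_iD_i$ with $D_1,\dots,D_k$ distinct prime divisors and $a_i\in\bQ\setminus\{0\}$, so that $\Supp(D)=D_1\cup\cdots\cup D_k$. Since $D\sim_\bQ 0$, there are a positive integer $m$ and $f\in k(X)^*$ with $\div(f)=mD$; set $\beta:=\tfrac1m\tfrac{df}{f}$. This is a closed rational $1$-form, nonzero because $f$ is nonconstant, whose polar divisor is reduced and equal to $\Supp(D)$, with residue $a_i\ne 0$ along each $D_i$; in other words $\beta$ is a logarithmic $1$-form with poles exactly along $\Supp(D)$. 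Let $\omega\in H^0(X,\Omega^1_X\otimes N_\sF)$ be a twisted $1$-form defining $\sF$, with zero locus of codimension $\ge 2$; recall that $\omega\wedge d\omega=0$ and that $N_\sF$ is a line bundle with $K_\sF=K_X+c_1(N_\sF)$. I would then split the argument according to whether the twisted $2$-form $\omega\wedge\beta$ vanishes identically.

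If $\omega\wedge\beta\equiv 0$, then at a general point of $X$ the $1$-forms $\omega$ and $\beta$ are proportional, hence $\ker\beta=\ker\omega=\sF$ generically, and $\sF$ is defined by the logarithmic $1$-form $\beta$, whose poles are exactly $\Supp(D)$. This is the first alternative.

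Suppose now $\omega\wedge\beta\not\equiv 0$. The key point I would establish is that $\omega\wedge\beta$ has no poles along $\Supp(D)$: near a general point of $D_i$, writing $D_i=\{g_i=0\}$, the $\sF$-invariance of $D_i$ gives $dg_i\wedge\omega\in g_i\cdot(\Omega^2_X\otimes N_\sF)$, while $\beta=a_i\tfrac{dg_i}{g_i}+\beta_i$ with $\beta_i$ holomorphic, so $\omega\wedge\beta=\tfrac{a_i}{g_i}\,\omega\wedge dg_i+\omega\wedge\beta_i$ extends holomorphically across $D_i$. Hence $\Theta:=\omega\wedge\beta$ is a nonzero global section of $\Omega^2_X\otimes N_\sF$; it is decomposable, and integrable because the Frobenius conditions $d\omega\wedge\omega\wedge\beta=0$ and $d\beta\wedge\omega\wedge\beta=0$ follow from $\omega\wedge d\omega=0$ and $d\beta=0$. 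After saturation, $\Theta$ defines a codimension $2$ foliation $\sG$ — a genuine foliation of positive rank precisely because $\dim X\ge 3$ — with $\sG\subsetneq\sF$, the inclusion being strict since $\omega\wedge\beta\not\equiv 0$. Finally, if $Z\ge 0$ denotes the divisorial part of the zero locus of $\Theta$, then the saturated defining form of $\sG$ is a section of $\Omega^2_X\otimes N_\sF\otimes\sO_X(-Z)$, so $\det N_\sG\simeq N_\sF\otimes\sO_X(-Z)$ and $K_\sG=K_X+c_1(\det N_\sG)=K_\sF-Z\le K_\sF$, which is the second alternative.

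The main obstacle will be the pole-cancellation step: one has to check carefully, at a general point of each component $D_i$, that $\sF$-invariance forces $\omega\wedge\beta$ to extend holomorphically across $D_i$, so that $\Theta$ is an honest twisted holomorphic $2$-form and the canonical bundle formula $K_\sG=K_X+c_1(\det N_\sG)$ is available. The remaining steps — passing to the saturation of $\sG$, identifying $\det N_\sG$, and verifying that $\sG$ is a foliation in the sense of Section~\ref{section:foliations} — are routine bookkeeping.
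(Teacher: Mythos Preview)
Your proposal is correct and follows essentially the same approach as the paper. Both arguments construct the closed logarithmic $1$-form from the $\bQ$-trivial divisor (the paper writes it locally as $\xi_\alpha=\sum d_i\,\frac{df_{i\alpha}}{f_{i\alpha}}-\frac{du_\alpha}{u_\alpha}$, but a one-line computation shows this equals $\frac{df}{f}$ globally, which is your $\beta$ up to the factor $m$), wedge it with the defining form $\omega$, and split on whether the wedge vanishes; your write-up is simply more explicit about the pole-cancellation step and the computation $K_{\sG}=K_{\sF}-Z$, which the paper leaves implicit.
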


\begin{proof}
Let $\omega\in H^0(X,\Omega^1_X\otimes N_\sF)$ be a twisted $1$-form defining $\sF$. We may assume that $D$ is integral, and $D \sim_\bZ 0$. 
Let $\{D_i\}_{i\in I}$ be the set of irreducible components of $D$, and write
$D=\sum_{i\in I}d_i D_i$. 
There exist an affine open cover $(U_\alpha)_{\alpha\in A}$ of $X$, a nonzero rational function $f$ on $X$, and regular functions $(f_{i\alpha})_{i\in I,\alpha\in A}$ such that, over $U_\alpha$,
$f_{i\alpha}$ is a defining equation for $D_i$, and
$$\prod_{i\in I}f_{i\alpha}^{d_i}=u_\alpha f,$$
where $u_\alpha$ is a unit on $U_\alpha$. Thus, for $\alpha, \beta\in A$,
$$\sum_{i\in I}d_i(\frac{df_{i\alpha}}{f_{i\alpha}}-\frac{df_{i\beta}}{f_{i\beta}})=\frac{du_\alpha}{u_\alpha}-\frac{du_\beta}{u_\beta}$$
over $U_{\alpha\beta}$.
For each $\alpha \in A$,
set $\xi_\alpha=\sum_{i\in I}d_i\frac{df_{i\alpha}}{f_{i\alpha}}-\frac{du_\alpha}{u_\alpha}$. 
Then the $\xi_\alpha$'s define a logarithmic $1$-form $\xi$ on $X$ with poles exactly along $\textup{Supp}(D)$, and 
$\omega\wedge\xi\in H^0(X,\Omega^2_X\otimes N_\sF)$
since $\textup{Supp}(D)$ is invariant by $\sF$. If $\omega\wedge\xi=0$ then 
$\sF$ is defined by the logarithmic $1$-form $\xi$.
If $\omega\wedge\xi\neq 0$, then 
$\omega\wedge\xi$ defines
a codimension 2 foliation $\sG\subsetneq \sF$ such that
$K_\sG\le K_\sF$. This completes the proof.
\end{proof}

\begin{lemma}\label{lemma:codimension_one_first_integral}
Let $X$ be a $\bQ$-factorial normal projective variety  with Picard number $\rho(X)=1$, and 
$\sF\subsetneq T_X$ an algebraically integrable 
codimension one foliation with a rational first integral $\pi:X \dashrightarrow C$. 
Then either any fiber of $\pi$ is irreducible
or there exists a codimension 2 foliation $\sG\subsetneq \sF$ on $X$ with $K_\sG\le K_\sF$.
\end{lemma}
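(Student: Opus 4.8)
The plan is to argue by contradiction; since $\sF$ has codimension $1$ and the alternative conclusion produces a codimension $2$ subfoliation, I may assume $\dim X\ge 3$ (otherwise the statement is vacuous, as there is no foliation of rank $0$). Suppose that $\pi$ has a reducible fiber, say $\pi^{-1}(c_0)=\sum_{i\in I}a_iD_i$ with $|I|\ge 2$, and that there is \emph{no} codimension $2$ subfoliation $\sG\subsetneq\sF$ with $K_\sG\le K_\sF$. Two preliminary observations. First, each $D_i$ is invariant under $\sF$: over the open locus where $\pi$ is a morphism and $\sF$ is regular one has $\sF=T_{\,\cdot/C}$, hence $T_{\,\cdot/C}\subseteq\sF$ in general, and vector fields annihilating $\pi^*(z-c_0)$ are tangent to $\pi^{-1}(c_0)$ and therefore to each $D_i$ at its general point. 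Second, since $X$ is $\bQ$-factorial with $\rho(X)=1$, any two effective divisors on $X$ are $\bQ$-linearly proportional (this is what we use; it holds in particular in our applications, where $X$ is a $\bQ$-Fano variety). Applying the second observation to $a_1D_1$ and $\sum_{i\ne 1}a_iD_i$ yields a nonzero $\bQ$-divisor $D\sim_\bQ 0$ with $\textup{Supp}(D)=\textup{Supp}(\pi^{-1}(c_0))$, in which every component $D_i$ appears with a \emph{nonzero} coefficient, and whose support is invariant by $\sF$.

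Next I would pass to the smooth locus. Let $X^\circ\subset X$ be the complement of $\textup{Sing}(X)$ and of the indeterminacy locus of $\pi$, an open set with complement of codimension $\ge 2$ on which $\pi$ is a morphism and $\sF$ restricts to a genuine foliation on a smooth variety. Some positive integral multiple of $D$ is linearly equivalent to $0$ on $X$, so the construction in the proof of Lemma~\ref{lemma:invariant_divisor_zero_first_chern_class} goes through verbatim over $X^\circ$ (once a global defining rational function is fixed it is essentially local, and projectivity is not used). It produces one of the following alternatives: (a) $\sF|_{X^\circ}$ is defined by a closed logarithmic $1$-form $\xi$ with poles along $\textup{Supp}(\pi^{-1}(c_0))\cap X^\circ$ — concretely $\xi=df/f$ where $\textup{div}(f)$ is a nonzero multiple of $D$; or (b) there is a codimension $2$ foliation $\sG^\circ\subsetneq\sF|_{X^\circ}$ with $K_{\sG^\circ}\le K_\sF|_{X^\circ}$.

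In case (b), the saturation of $\sG^\circ$ inside $T_X$ is a codimension $2$ subfoliation $\sG\subsetneq\sF$ on $X$, and since the sheaves involved are reflexive and agree in codimension $1$, the inequality $K_{\sG^\circ}\le K_\sF|_{X^\circ}$ propagates to $K_\sG\le K_\sF$ — contrary to our assumption. Hence case (a) must hold, and I would rule it out. Over $X^\circ$ the leaves of $\sF$ are the fibers of $\pi$, so $\xi=h\,\pi^*dz$ for a local coordinate $z$ on $C$ and a rational function $h$; from $d\xi=0$ one gets $dh\wedge\pi^*dz=0$, so $h$ is constant along the fibers of $\pi$ and descends to $C$, giving $\xi=\pi^*\eta_C$ for a meromorphic $1$-form $\eta_C$ on $C$. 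Because $\xi$ is logarithmic and all of its poles lie over $c_0$, the form $\eta_C$ has at most a simple pole, only at $c_0$; the residue theorem on $C$ then forces $\textup{Res}_{c_0}(\eta_C)=0$, so $\eta_C$ is holomorphic and $\xi=\pi^*\eta_C$ has no poles at all. This contradicts the fact that $\xi=df/f$ has a genuine pole along $D_1$, since $f$ has a zero of positive order there (the coefficient of $D_1$ in $D$ is nonzero). The contradiction shows case (a) cannot occur, which finishes the proof.

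The step I expect to be the main obstacle is the passage to $X^\circ$: one has to verify that the argument of Lemma~\ref{lemma:invariant_divisor_zero_first_chern_class} remains valid over a smooth quasi-projective variety — in particular that the twisted $1$-form defining $\sF$, the logarithmic form $\xi$, and the product $\omega\wedge\xi$ are well defined, and that invariance of $\textup{Supp}(D)$ still forces $\omega\wedge\xi$ to be regular — and that each of the two outcomes extends unambiguously across the codimension $\ge 2$ complement. The other point requiring a little care is the control of the poles of $\pi^*\eta_C$ along a possibly multiple fiber of $\pi$, used to rule out case (a), although this is routine.
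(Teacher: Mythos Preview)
Your argument is correct and follows the same overall strategy as the paper: produce an invariant $\bQ$-trivial divisor $D$, apply Lemma~\ref{lemma:invariant_divisor_zero_first_chern_class} on the smooth locus, and in the logarithmic case show that the closed form $\xi$ descends to $C$ and derive a contradiction. The one substantive difference is the choice of $D$. You take $D$ supported entirely on the reducible fiber $\pi^{-1}(c_0)$, which forces you through the residue-theorem step (a meromorphic $1$-form on $C$ with a single simple pole must be holomorphic). The paper instead sets $D=D_1-d\,D_2$, where $D_1$ is one component of the reducible fiber and $D_2$ is a \emph{general} (hence irreducible) fiber; then $\textup{Supp}(D)=D_1\cup D_2$ is manifestly not a union of full fibers, so $\xi=\pi^*\gamma$ is an immediate contradiction, with no residue computation needed. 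Your route is perfectly valid, just slightly longer; the paper's choice of $D$ buys a one-line finish.
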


\begin{proof}Suppose that
$\pi$ has a reducible fibre. Let $D_1$ be an irreducible component of a reducible fiber, and let $D_2$ be a general fiber. Since $X$ is $\bQ$-factorial with 
$\rho(X)=1$, $D_1\sim_{\bQ} d\,D_2$ for some nonzero rational number $d$. Set $D:=D_1-d\,D_2\sim_\bQ 0$. By  
Lemma \ref{lemma:invariant_divisor_zero_first_chern_class} above applied to the smooth locus of $X$, 
either $\sF$ is defined by a logarithmic $1$-form with poles exactly along $\textup{Supp}(D)$, or
there exists a codimension 2 foliation $\sG\subsetneq \sF$ with 
$K_\sG\le K_\sF$.

Suppose that $\sF$ is defined by a logarithmic $1$-form $\xi$ with poles exactly along $\textup{Supp}(D)$. 
Since a general fiber of $\pi$ is irreducible, $\xi=\pi^*\gamma$, where $\gamma$ is a rational $1$-form on $C$. This implies that $\xi$ has poles along fibers, a contradiction.
\end{proof}

\begin{proof}[Proof of Theorem~\ref{thm:codimension_1_del_pezzo_picard_number_1}]

Let $\sL$ be an ample line bundle on $X$ such that $-K_\sF\sim_\bQ (n-2)c_1(\sL)$. 
Then $\textup{Pic}(X)=\bZ[\sL]$ by  Lemma~\ref{lemma:Fano_lin_equ_and_num_equ}.

If $\sF$ is not algebraically integrable, then the result follows from Proposition \ref{prop:codimension_one_del_pezzo_algebraic_leaves}.

So from now on we assume that $\sF$ is algebraically integrable, and consider a rational first integral 
$\pi:X\dashrightarrow W$. 
By Corollary \ref{cor:fano_foliation_picard_number_one}, 
$X$ is a $\bQ$-Fano variety, and hence 
rationally connected by Theorem~\ref{thm:rc}.
So we must have $W\simeq\bP^1$.

By Lemma \ref{lemma:codimension_one_first_integral},
either any fiber of $\pi$ is irreducible,
or there exists a codimension 2 foliation $\sG\subsetneq \sF$ on $X$ with $K_\sG\le K_\sF$.
In the latter case, 
$\sG$ has index $i_\sG \ge n-2$, and so, 
by Theorem~\ref{thm:codimension_r_Fano_index__equal_rank_picard_number_1}, 
we are in case (3).

From now on we assume that any fiber of $\pi$ is irreducible.
We denote by $s\ge 0$ the number of multiple fibers of $\pi$. 
When $s\ge 1$, we denote by $m_1F_1,\ldots, m_sF_s$ the non reduced fibers of $\pi$, with 
$F_i$ reduced and $m_1\ge \cdots \ge m_s \ge 2$.
For notational convenience, we let $F_{s+1}$ and $F_{s+2}$ be general fibers of $\pi$,
and set $m_{s+1}=m_{s+2}=1$.
For $1\le i\le s+2$, set $p_i:=\pi(F_i)\in \bP^1$, and let  $k_i$ be the positive integer
such that $\sO_X(F_i)\simeq\sL^{\otimes k_i}$.
Notice that  
\begin{equation}\label{multiplicity}
k_1m_1=\cdots=k_sm_s=k_{s+1}m_{s+1}=k_{s+2}m_{s+2}.
\end{equation}
In particular, $k_1\le  \cdots  \le k_{s+1} = k_{s+2}$.

Let $R(\pi)=\sum_{1\le i\le s}\frac{m_i-1}{m_i}\pi^*\sO_{\bP^1}(p_i)$
be the ramification divisor of $\pi$. By \cite[Lemma 4.4]{druel99}, we have 
$$
\det(N_\sF)\simeq \pi^*\sO_{\bP^1}(2)\otimes\sO_X\big(-R(\pi)\big)
\simeq\sO_X\big(F_1+F_2-\sum_{3\le i\le s}(m_i-1)F_i\big)
\simeq \sL^{\otimes \big(k_1+k_2-\sum_{3\le i\le s}k_i(m_i-1)\big)}.
$$
On the other hand, 
$$
\sO_X(K_X)\simeq \sO_X(K_\sF)\otimes \det(N^*_\sF).
$$ 
Thus the index $i_X$ of $X$ satisfies 
\begin{equation}\label{canonical_bundle_formula}
i_X=n-2+k_1+k_2-\sum_{3\le i\le s}k_i(m_i-1).
\end{equation}

By Corollary \ref{cor:index}, we must have $i_X \ge n-1$.
If $i_X \ge n+1$, then, by Theorem \ref{thm:ko}, $i = n+1$,
$(X,\sL) \simeq (\bP^n,\sO_{\bP^n}(1))$
and $\sF$ is degree one foliation on $\bP^n$.
So from now on we assume that $n-1\le i_X\le n$.

We claim that there exists 
a finite cover $c:C \to \bP^1$, unramified over 
$\bP^1\setminus \{p_1,\ldots,p_s\}$, and having 
ramification index $m_i$ at each point of $c^{-1}(p_i)$.
If $s \le 2$, then $i_X=n$, $k_1=k_2=1$,
and $m_1=m_2$ 
by \eqref{canonical_bundle_formula} and \eqref{multiplicity}.
In particular, either $s=0$ (when $m_1=m_2=1$) or $s=2$ (when $m_1=m_2\ge 2$).  
In either case the existence of 
$c:\bP^1\to\bP^1$ is clear. 
If $s>2$, the existence of $c:C \to \bP^1$ follows from 
\cite[Lemma 6.1]{kobayashi_ochiai82}.
Let $X'$ be the normalization of $X$ in the function field of
$X\times_{\bP^1} C$, with induced morphism $c' : X'\to X$, and set $\sL':={c'}^*\sL$.
An easy calculation shows that $c'$ is \'etale in codimension one. Hence $c'$ is \'etale 
over $X\setminus \textup{Sing}(X)$ by purity of the branch locus. 
In particular, $X\setminus\textup{Sing}(X)$ is not simply connected if $s>0$.

Suppose that $i_X = n$. Then, by Theorem \ref{thm:ko},
$(X,\sL)\simeq (Q_n,\sO_{Q_n}(1))$ where 
$Q_n$ is a possibly singular quadric hypersurface in $\bP^{n+1}$. 
Moreover, since $X\setminus \textup{Sing}(X)$ is simply connected, 
$s=0$ and $k_1=k_2=1$ by \eqref{canonical_bundle_formula}. In other words,
$\sF$ is a pencil of hyperplane sections of $Q_n$.

Suppose now that $i_X=n-1$. In particular, $(X,\sL)$ is a (normal) del Pezzo variety. 
Then, by (\ref{canonical_bundle_formula}) and (\ref{multiplicity}), we must have $s=3$ and either 
$(k_1,k_2,k_3,m_1,m_2,m_3)=(1,k,k,2k,2,2)$ for some integer $k\ge 1$, or
$(k_1,k_2,k_3,m_1,m_2,m_3)\in \big\{(2,2,3,3,3,2), (3,4,6,4,3,2), (6,10,15,5,3,2)\big\}$.


Note that $K_{X'}={c'}^*K_X$ since
$c':X'\to X$ is \'etale in codimension one. This implies
that $X'$ has Gorenstein klt singularities (see
\cite[Proposition 5.20]{kollar_mori}). Thus $(X',\sL')$ is a (normal) del Pezzo variety, and hence 
rationally connected by Theorem~\ref{thm:rc}.
So $C\simeq \bP^1$. By Hurwitz formula, either  $\deg(c)=4k$ for some integer $k\ge 1$, or $\deg(c)\in \{12, 24, 60\}$, respectively. 
Since $\sL^n\ge 3$ by  Lemma \ref{lemma:vanishing_del_pezzo}, we have 
${\sL'}^n=\deg(c)\cdot \sL^n\ge 12$. 
On the other hand, Proposition \ref{prop:del_pezzo_degree} tells us that 
${\sL'}^n\le 9$, yielding a contradiction and completing the proof.
\end{proof}

Now we consider the case of Picard number $\rho(X) \ge 2$.

\begin{thm}\label{thm:classification_picard_number_at_least_2}
Let $X$ be a factorial klt projective variety of dimension $n\ge 3$ and Picard number $\rho(X)\ge 2$.
Suppose that $X$ admits a codimension $1$ del Pezzo foliation $\sF\subsetneq T_X$.
Then  there is an exact sequence of vector bundles 
$0\to \sK\to \sE\to \sV \to 0$ on $\bP^1$ such that $X\simeq \p_{\p^1}(\sE)$, and $\sF$ is the pullback via the relative linear projection $X\map Z=\p_{\p^1}(\sK)$ of a foliation 
on $Z$  induced by a nonzero global section of $T_Z\otimes q^*\det(\sV)^*$.
Here $q:Z\to \p^1$ denotes the natural projection.  
Moreover,  one of the following holds.
\begin{enumerate}
\item 
$(\sE,\sK)\simeq \big(\sO_{\bP^1}(2)\oplus \sO_{\bP^1}(a)^{\oplus 2},\sO_{\bP^1}(a)^{\oplus 2}\big)$
for some positive integer $a$.
\item 
$(\sE,\sK)\simeq \big(\sO_{\bP^1}(1)^{\oplus 2}\oplus \sO_{\bP^1}(a)^{\oplus 2},\sO_{\bP^1}(a)^{\oplus 2}\big)$
for some positive integer $a$.
\item 
$(\sE,\sK)\simeq\big(\sO_{\bP^1}(1)\oplus \sO_{\bP^1}(a)\oplus \sO_{\bP^1}(b),\sO_{\bP^1}(a)\oplus \sO_{\bP^1}(b)\big)$ for distinct 
positive integers $a$ and $b$.
\end{enumerate}
\end{thm}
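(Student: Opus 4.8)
Write $r=n-1$ for the rank of $\sF$, so that $i_\sF=r-1=n-2$; since $X$ is factorial, $K_\sF$ is Cartier, and we fix an ample line bundle $\sL$ on $X$ with $-K_\sF\sim_\bQ(n-2)\,c_1(\sL)$. Following the pattern of \cite{lpt3fold} and \cite{fano_foliation}, the plan is to first extract from $\sF$ an algebraically integrable subfoliation whose general leaf is a linear $\bP^{n-2}$. Since $-K_\sF$ is ample, $\mu_\sL(\sF)>0$, so Lemma~\ref{lemma:foliation_max} applies. If $\sF$ itself were algebraically integrable with rationally connected general leaves, then the argument in the proof of Theorem~\ref{thm:codimension_1_del_pezzo_picard_number_1} (a rational first integral $X\dashrightarrow W$ with $W\simeq\bP^1$ by Theorems~\ref{thm:rc} and~\ref{thm:main_foliation}, the canonical bundle formula of \cite{druel99}, and Corollary~\ref{cor:index}) would force $X$ to be $\bP^n$, a quadric hypersurface, or a normal generalized cone --- each of Picard number $1$, contrary to hypothesis. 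Hence Lemma~\ref{lemma:foliation_max} provides a \emph{proper} algebraically integrable subfoliation $\sG\subsetneq\sF$, of rank $s\le n-2$, with rationally connected general leaf and $c_1(\det\sG)\cdot C\ge(n-2)\,c_1(\sL)\cdot C$ for a general complete intersection curve $C$.

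Next I would pin down the general leaf of $\sG$. Let $(\tilde F,\tilde\Delta)$ be a general log leaf of $\sG$, with $\tilde e:\tilde F\to X$, so that $-(K_{\tilde F}+\tilde\Delta)\sim_\bQ-\tilde e^*K_\sG$. Arguing as in the proof of Proposition~\ref{prop:ko_leaves}, now feeding in the numerical inequality above in place of a relation $-K_\sG\sim_\bQ i_\sG c_1(\sL)$, one obtains $s=n-2$ and $(\tilde F,\tilde\Delta)\simeq(\bP^{n-2},H)$ with $H$ a hyperplane and $\tilde e^*\sL\simeq\sO_{\bP^{n-2}}(1)$ (the quadric alternative permitted by Theorem~\ref{thm:ko} being excluded by Proposition~\ref{prop:foliation_algebraically_integrable_klt_not_weak_fano}; note also that $(\bP^{n-2},H)$ is not log terminal by Lemma~\ref{lemma:proj_klt}). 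In particular $\sG\ne\sF$, $\sF/\sG$ has rank $1$, and $\tilde\Delta\ne0$, so the general leaf of $\sG$ meets the singular locus of $\sG$ along a hyperplane. Forming the normalized family of leaves $\pi:U\to W$, $e:U\to X$ (Lemma~\ref{lemma:leaffoliation}), we get $\dim W=2$, a general fibre of $\pi$ is $\bP^{n-2}$ with $e^*\sL$ tautological on it, and by Proposition~\ref{prop:p_bundle} $\pi$ is a $\bP^{n-2}$-bundle, $U\simeq\bP_W(\pi_*e^*\sL)$.

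To reconstruct $X$, I would run the analysis in the proof of Theorem~\ref{thm:codimension_r_Fano_index__equal_rank_picard_number_1}: since $\tilde\Delta\ne0$, the birational morphism $e$ contracts a prime divisor $E$ (a $\bP^1$-bundle over its image), via Lemma~\ref{lemma:vanishing} applied to the families obtained by base change to curves in $W$; this realizes $X$ as the relative generalized cone associated to $U\to W$. Since $X$, hence $W$, is rationally connected (one argues as in \cite{fano_foliation}), and $W$ is factorial with klt singularities, $W$ is a smooth rational surface, which one identifies with a Hirzebruch surface $Z=\bP_{\bP^1}(\sK)$ with natural projection $q:Z\to\bP^1$; in particular $\rho(X)=2$. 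The contraction then exhibits $X\simeq\bP_{\bP^1}(\sE)$ for a rank $n$ bundle $\sE$ on $\bP^1$, the inclusion $\sG\subsetneq\sF$ corresponding to an exact sequence $0\to\sK\to\sE\to\sV\to0$ on $\bP^1$, with $X\dashrightarrow Z$ the relative linear projection attached to $\sE\twoheadrightarrow\sV$, and $\sF$ the pullback via $X\dashrightarrow Z$ of the rank $1$ foliation on $Z$ induced by a nonzero global section of $T_Z\otimes q^*\det(\sV)$ (the twist appearing from the projection formula for $X\dashrightarrow Z$ applied to $-K_\sF$).

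Finally, to obtain the list, I would express $-K_\sF\sim_\bQ(n-2)\,c_1(\sL)$ in terms of the tautological class on $\bP_{\bP^1}(\sE)$ and a fibre of $X\to\bP^1$; the requirement that $-K_\sF$ be \emph{ample} (not merely nef), that $\sL$ be a genuine line bundle, and that $T_Z\otimes q^*\det(\sV)$ carry a nonzero section then reduces the splitting type of $\sE$ to finitely many cases, which one checks to be exactly (a), (b), (c). I expect the main obstacle to lie in the second and third steps: extracting the precise Kobayashi--Ochiai statement for $\sG$ that forces its general leaf to be a \emph{linear} $\bP^{n-2}$, and then controlling the birational morphism $e:U\to X$ --- i.e.\ showing that, under $\rho(X)\ge2$, the only possibility is the relative generalized-cone picture over a Hirzebruch surface. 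The closing numerical bookkeeping is routine but lengthy.
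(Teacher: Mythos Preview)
Your approach diverges from the paper's, and it has a genuine gap at the very first step.

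The paper's proof does not go through subfoliations at all. Instead, it uses adjunction theory: since $K_X+(n-2)c_1(\sL)$ is not nef (Theorem~\ref{thm:main_foliation}), Fujita's classification \cite[Theorem~3]{fujita87} says that either $X$ is a point blow-up, or $X$ admits a fibration $p:X\to Y$ with $\rho(Y)=\rho(X)-1$ whose general fibre $(F,\sL_{|F})$ is one of $(\bP^2,\sO(2))$, $(Q_{n-1},\sO(1))$, $(\bP^{n-1},\sO(1))$, or $(\bP^{n-2},\sO(1))$. One then rules out all cases but the $\bP^{n-1}$-bundle case (B3) by restricting $\sF$ to rational curves in a fibre and checking splitting types (and, for the quadric case, by a vanishing $h^0(F,\Omega^{[1]}_F\otimes\sL_{|F})=0$). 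In case (B3), the exact sequence $0\to\wedge^{n-1}T_{X/Y}\to\wedge^{n-1}T_X\to\wedge^{n-2}T_{X/Y}\otimes p^*T_Y\to 0$ forces $Y\simeq\bP^1$ via \cite[Proposition~9.11(4)]{fano_foliation}, and the precise list comes directly from \cite[Theorem~9.6]{fano_foliation}. The blow-up case is excluded last by an unsplit-curve argument.

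Your gap is this: you try to dispose of the case ``$\sF$ is algebraically integrable with rationally connected general leaves'' by invoking the proof of Theorem~\ref{thm:codimension_1_del_pezzo_picard_number_1} to force $\rho(X)=1$. But that proof \emph{assumes} $\rho(X)=1$ throughout --- Lemma~\ref{lemma:codimension_one_first_integral} and Corollary~\ref{cor:index} both require it --- so it cannot be used to deduce $\rho(X)=1$. Worse, in every outcome (1)--(3) of the theorem you are trying to prove, $\sF$ \emph{is} algebraically integrable with rationally connected general leaves (see the Remark following Theorem~\ref{thm:classification_picard_number_at_least_2}). So the case you claim to rule out is precisely the case that occurs, and Lemma~\ref{lemma:foliation_max} then gives you no subfoliation $\sG$ to work with. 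Even if you produced $\sG$ by other means, your later step ``run the analysis in the proof of Theorem~\ref{thm:codimension_r_Fano_index__equal_rank_picard_number_1}'' also relies on $\rho(X)=1$ (to show the exceptional locus of $e$ is irreducible and to compute $\rho(W)$), so that part would need substantial new ideas as well.
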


\begin{proof}
Note that $X$ has Gorenstein rational singularities.
Denote by $n$ the dimension of $X$, and 
let $\sL$ be an ample line bundle on $X$ such that $-K_\sF\sim_\bQ (n-2)c_1(\sL)$. 
By Theorem \ref{thm:main_foliation}, $K_X+(n-2)c_1(\sL)$ is not nef. 
By \cite[Theorem 3]{fujita85}, one of the following holds: 

\begin{enumerate}

\item [(A)] $X$ is the blowup of $Y$ at a smooth point (see also \cite[Theorem 3.1]{andreatta}).
\item [(B)] There exists a surjective morphism $p:X\to Y$ onto a normal projective variety $Y$ with Picard number $\rho(Y)=\rho(X)-1$, 
with general fiber $F$, satisfying one of the following conditions.
\begin{enumerate}
\item [(1)] $\dim(Y)=1$ and $(F,\sL_{|F})\simeq (\bP^{2},\sO_{\bP^{2}}(2))$.
\item [(2)] $\dim(Y)=1$ and $(F,\sL_{|F})
\simeq (Q_{n-1},\sO_{Q_{n-1}}(1))$,  where $Q_{n-1}$ is a (possibly singular) quadric hypersurface in $\bP^n$.
\item [(3)] $\dim(Y)=1$, $(F,\sL_{|F})\simeq (\bP^{n-1},\sO_{\bP^{n-1}}(1))$,
and there exists an ample vector bundle $\sG$ on $Y$ such that $X\simeq\bP_Y(\sG)$ as varieties over $Y$, with tautological line bundle $\sO_X(1)\simeq\sL$.
\item [(4)] $\dim(Y)=2$ and $(F,\sL_{|F})\simeq (\bP^{n-2},\sO_{\bP^{n-2}}(1))$.
\end{enumerate}
\end{enumerate}

First we consider case (B). 
We claim that $T_{X/Y} \nsubseteq \sF$.
Since $F$ is log terminal, $\sF\neq T_{X/Y}$ by Proposition \ref{prop:foliation_algebraically_integrable_klt_not_weak_fano}.
Hence, if $T_{X/Y} \subset \sF$, we must be in case (B4).
In this case, by \cite[Lemma 6.7]{fano_foliation}, $\sF$ is the pullback by $p$ of a rank 1 foliation on $Y$, and thus 
$\sF_{|\ell}\simeq T_{F|\ell}\oplus \sO_{\bP^{1}}$, where $\ell$ is a general line on $F$.
But this contradicts the assumptions that $-K_\sF\sim_\bQ (n-2)c_1(\sL)$, and proves the claim.

By considering the possible splitting types of the restriction of $\sF$ to a  general line on $F$, 
and using the fact that $T_{X/Y} \nsubseteq \sF$, we see that cases (B1) and (B4)
do not occur.

Next we show that case (B2) cannot occur either. 
Let $C\subset Q_{n-1}\setminus \textup{Sing}(Q_{n-1})$ be a general conic. 
Then $X$ is smooth in a neighborhood of $C$ and 
$\sF_{|C}\subset {T_X}_{|C}\simeq \sO_{\bP^1}(2)^{\oplus n-1}
\oplus\sO_{\bP^1}$.
Since $\sF\neq T_{X/Y}$, 
the foliation $\sG:=\sF\cap T_{X/Y}$ has rank $n-2$, and 
$\sG_{|C}\simeq\sO_{\bP^1}(2)^{\oplus n-2}$. 
It induces
a foliation on $F\simeq Q_{n-1}$ defined by a twisted $1$-form 
in $H^0(F,\Omega^{[1]}_F\otimes\sL_{|F})$. But $h^0(F,\Omega^{[1]}_F\otimes\sL_{|F})=0$, a contradiction.

Suppose we are in case (B3). The short exact sequence
$$0\to T_{X/Y} \to T_{X} \to {p}^*T_{Y} \to 0$$
yields an exact sequence
$$0\to \wedge^{n-1}T_{X/Y} \to \wedge^{n-1}T_{X} 
\to \wedge^{n-2}T_{X/Y}\otimes {p}^*T_{Y} \to 0.$$
Since $\sF\nsubseteq T_{X/Y}$, 
$h^0(X,\wedge^{n-2}T_{X/Y}\otimes {p}^*T_{Y}\otimes \sL^{\otimes -(n-2)})\neq 0$.
By \cite[Proposition 9.11 (4)]{fano_foliation},
$Y\simeq\bP^{1}$.
The description of $X$ and $\sF$ now follows from 
\cite[Theorem 9.6]{fano_foliation}.

\medskip

Finally, we show that case (A) does not occur.
Let $p:X\to Y$ be the blowup of $Y$ at a smooth point $y\in Y$, with exceptional divisor $E$.
Then $\sL+E\simeq p^*\sM$ for some ample line bundle $\sM$ on $Y$, and $\sF$ induces a codimension $1$ del Pezzo foliation $\sG$ on $Y$ with 
$\det(\sG)\simeq\sM^{\otimes n-2}$. 

By replacing $(X,\sF,\sL)$ 
with $(Y,\sG,\sM)$ successively, 
we may assume that 
either $\rho(Y)=1$ or $Y$ satisfies (B3).
In either case there is an unsplit covering family $H$ of rational curves on $Y$
such that $\sM\cdot H=1$ 
(when $\rho(Y)=1$, this follows from the classification in Theorem \ref{thm:codimension_1_del_pezzo_picard_number_1}).
Let $C$ be a curve from the family $H$ passing through $y$, and
$\tilde C$ its strict transform in $X$.
Then 
$$\sM\cdot H=\sM\cdot C=\pi^*\sM\cdot \tilde C=\sL\cdot \tilde C+E\cdot \tilde C\ge 2,$$
contradicting the choice of $H$. This completes the proof.
\end{proof}

\begin{rem}
Under the assumptions of Theorem~\ref{thm:classification_picard_number_at_least_2},
we have shown in particular that $X$ is smooth.
So $\sF$ is algebraically integrable, and \cite[Remark 7.11]{fano_foliation} provides complete description of the general log leaf. 
\end{rem}

\begin{proof}[Proof of Theorem~\ref{thm:classification_del_pezzo}]
Apply Theorems~\ref{thm:codimension_1_del_pezzo_picard_number_1} and \ref{thm:classification_picard_number_at_least_2}.
\end{proof}


\section{Regular foliations}\label{section:regular}

In this section, we discuss the singular locus of $\bQ$-Fano foliations
on varieties with mild singularities. 
Let $\sF\subsetneq T_X$ be a regular foliation on a complex projective manifold $X$. If $\sF$ is algebraically integrable, then $-K_\sF$ is not ample by Proposition \ref{prop:foliation_algebraically_integrable_klt_not_weak_fano}. We start by generalizing this result
to arbitrary regular foliations on complex projective manifolds with Picard number one.

\begin{thm}Let $\sF\subsetneq T_X$ be a regular foliation on a complex projective manifold $X$ with Picard number $\rho(X)=1$. 
Then $-K_\sF$ is not ample.
\end{thm}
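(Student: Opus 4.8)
The plan is to argue by contradiction, using Bott's vanishing theorem for the normal bundle of a regular holomorphic foliation. So suppose $-K_\sF$ is ample, and set $n=\dim X$, let $r$ be the rank of $\sF$, and put $q=n-r$; since $\sF\subsetneq T_X$ is a proper subsheaf we have $1\le q\le n-1$. Being smooth with $\rho(X)=1$, the variety $X$ is klt, and $\sF$ is a Gorenstein $\bQ$-Fano foliation. Hence by Corollary~\ref{cor:fano_foliation_picard_number_one} the variety $X$ is $\bQ$-Fano, and by Lemma~\ref{lemma:Fano_lin_equ_and_num_equ} we get $\Pic(X)\simeq\bZ$, generated by an ample line bundle $A$. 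I would then write $-K_X\sim_\bZ i_XA$ and $-K_\sF\sim_\bZ i_\sF A$ with $i_X,i_\sF$ positive integers, and invoke Corollary~\ref{cor:index} to conclude $i_X>i_\sF$.

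Next I would exploit regularity. Since $\sF$ is regular, it is an integrable vector subbundle of $T_X$, so $N_\sF:=T_X/\sF$ is a genuine vector bundle of rank $q$, with
$$
c_1(N_\sF)=c_1(T_X)-c_1(\sF)=-K_X+K_\sF\sim_\bZ (i_X-i_\sF)\,c_1(A).
$$
By Bott's vanishing theorem, every polynomial in the Chern classes of $N_\sF$ of weighted degree exceeding $q$ (the rank of $N_\sF$) vanishes in the cohomology of $X$ with complex coefficients; in particular $c_1(N_\sF)^{q+1}=0$. On the other hand $c_1(N_\sF)^{q+1}=(i_X-i_\sF)^{q+1}c_1(A)^{q+1}$, and since $A$ is ample and $q+1\le n$ one has $c_1(A)^{q+1}\neq 0$ — indeed $c_1(A)^{q+1}\cdot c_1(A)^{n-q-1}=A^n>0$. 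This is the desired contradiction, and it completes the proof.

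The one place where genuine care is needed is the verification that $c_1(N_\sF)\neq 0$: a priori $-K_\sF$ could be numerically equivalent to $-K_X$, in which case $\det N_\sF$ is trivial and Bott's theorem gives nothing in top degree. This is precisely what the index inequality $i_X>i_\sF$ of Corollary~\ref{cor:index} rules out (and that inequality, in turn, is where the pseudo-effectivity obstruction of Theorem~\ref{thm:main_foliation} is really being used). If one preferred to avoid Corollary~\ref{cor:index}, the case $c_1(N_\sF)\equiv 0$ could be excluded directly: then $\det N_\sF\simeq\sO_X$ because $X$ is Fano, so $\sF$ would be defined by a nowhere-vanishing global holomorphic $q$-form with $q\ge 1$, contradicting $h^0(X,\Omega^q_X)=0$ for the rationally connected manifold $X$. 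Everything else in the argument is formal.
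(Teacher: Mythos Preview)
Your proof is correct and rests on the same key ingredient as the paper's, namely Bott's vanishing for the normal bundle of a regular foliation. The organization differs, however, and your primary route is heavier than needed. You first invoke Corollary~\ref{cor:index} (hence the full strength of Theorem~\ref{thm:main_foliation}) to get $i_X>i_\sF$, and then use Bott to derive $c_1(N_\sF)^{q+1}=0$, contradicting $c_1(N_\sF)=(i_X-i_\sF)c_1(A)\neq 0$. The paper runs the logic the other way and stays more elementary: from Baum--Bott one gets $c_1(N_\sF)^{\dim X}=0$, and since $\rho(X)=1$ this forces $c_1(N_\sF)\equiv 0$; then $-K_X\equiv -K_\sF$ is ample, so $X$ is Fano, $\det(N_\sF)\simeq\sO_X$, and one obtains a nonzero global holomorphic $q$-form on $X$, contradicting Hodge symmetry and Kodaira vanishing. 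This is exactly the alternative you sketch in your last paragraph; it avoids the machinery behind Corollary~\ref{cor:index} entirely, so it is the preferable formulation.
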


\begin{proof}
Suppose to the contrary that $-K_\sF$ is ample.
Set $\sQ:=T_X/\sF$, $\sL:=\det(\sF)$, and denote by $r$ the rank of $\sF$. Then 
$\det(\sQ)\simeq \sO_X(-K_X)\otimes \sL^{\otimes -1}$.
By \cite[Corollary 3.4]{baum_bott70},  
$\det(\sQ)^{\dim(X)}=0$. Since $\rho(X)=1$, we must have
$\det(\sQ)\equiv 0$. This implies that $X$ is a Fano manifold, and, by Lemma \ref{lemma:Fano_lin_equ_and_num_equ}, 
$\det(\sQ)\simeq \sO_X$.
So $h^0(X,\Omega^{\dim(X)-r}_X)\neq 0$.
On the other hand, by Hodge symmetry, 
$h^0(X,\Omega^{\dim(X)-r}_X)
=h^{\dim(X)-r}(X,\sO_X)$, and the latter vanishes by Kodaira vanishing theorem, a contradiction.
\end{proof}

\begin{say}[The Atiyah class of a locally free sheaf]  \label{say:atiyah}
Let $X$ be a smooth variety, and $\sE$ a locally free sheaf of rank $r\ge 1$ on $X$.
Let $J_X^1(\sE)$ be the sheaf of $1$-jets of $\sE$. 
I.e., as a sheaf of abelian groups on $X$, 
$J_X^1(\sE)\simeq \sE\oplus (\Omega_X^1\otimes\sE)$,  and the $\sO_X$-module structure is given 
by $f(e,\alpha)=(fe,f\alpha-df\otimes e)$, where $f$, $e$ and $\alpha$ are 
local sections of $\sO_X$, $\sE$ and $\Omega_X^1\otimes\sE$, respectively. 
The \emph{Atiyah class} of $\sE$ is defined to be the element 
$at(\sE)\in H^1(X,\Omega_X^1\otimes\sE\hspace{-0.07cm}\textit{nd}_{\sO_X}(\sE))$ 
corresponding to the Atiyah extension 
$$
0\to \Omega_X^1\otimes\sE \to J_X^1(\sE) \to \sE \to 0.
$$
It can be explicitly described as follows. 
Choose an affine open cover $(U_i)_{i\in I}$ of $X$ such that $\sE$ admits a frame
$f_i:\sO_{U_i}^{r} \overset{\sim}{\to}\sE|_{U_i}$ for each $U_i$. 
For $i,j\in I$, define 
$f_{ij}:={f_j^{-1}}|_{U_{ij}}\circ {f_i}|_{U_{ij}}$. Then 
$$
at(\sE)=\big[(-{f_j}|_{U_{ij}}\circ {df_{ij}}|_{U_{ij}}\circ {f_i^{-1}}|_{U_{ij}})_{i,j}\big]\in 
H^1(X,\Omega_X^1\otimes\sE\hspace{-0.07cm}\textit{nd}_{\sO_X}(\sE))
$$ 
(see \cite[Proof of Theorem 5]{atiyah57}).
\end{say}

The next result follows from the proof \cite[Proposition 3.3]{baum_bott70} and \cite[Corollary 3.4]{baum_bott70}. 
We sketch the proof for the reader's convenience.

\begin{lemma}\label{lemma:atiyah_class_foliation}
Let $X$ be a smooth variety, and $\sF\subsetneq T_X$ a regular foliation. Set $\sQ=T_X/\sF$.
Then $at(\sQ)\in H^1(X,\Omega^1_X\otimes\sE\hspace{-0.07cm}\textit{nd}_{\sO_X}(\sQ))$ is in the image of
the natural map 
$$H^1(X,\sQ^*\otimes\sE\hspace{-0.07cm}\textit{nd}_{\sO_X}(\sQ))\to
H^1(X,\Omega^1_X\otimes\sE\hspace{-0.07cm}\textit{nd}_{\sO_X}(\sQ)).$$
\end{lemma}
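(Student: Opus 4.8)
The plan is to convert the statement, by a long exact sequence, into the vanishing of a single cohomology class, and then to exhibit that class as a \v{C}ech coboundary coming from the Bott partial connection of the foliation. Since $\sF$ is regular, $\sQ = T_X/\sF$ is locally free and $\sF\subset T_X$ is a subbundle, so dualizing $0\to\sF\to T_X\to\sQ\to 0$ produces a short exact sequence of vector bundles
$$
0\to\sQ^*\to\Omega^1_X\to\sF^*\to 0,
$$
in which $\sQ^* = N^*_\sF$ is the conormal bundle and the first arrow is the inclusion $N^*_\sF\subset\Omega^1_X$ that induces the natural map of the statement. Tensoring with the locally free sheaf $\sE\hspace{-0.07cm}\textit{nd}_{\sO_X}(\sQ)$ preserves exactness, so the long exact cohomology sequence contains
$$
H^1\big(X,\sQ^*\otimes\sE\hspace{-0.07cm}\textit{nd}_{\sO_X}(\sQ)\big)\longrightarrow H^1\big(X,\Omega^1_X\otimes\sE\hspace{-0.07cm}\textit{nd}_{\sO_X}(\sQ)\big)\overset{\rho}{\longrightarrow}H^1\big(X,\sF^*\otimes\sE\hspace{-0.07cm}\textit{nd}_{\sO_X}(\sQ)\big),
$$
with $\rho$ induced by the restriction map $\Omega^1_X\to\sF^* = \sH\hspace{-0.1cm}\textit{om}_{\sO_X}(\sF,\sO_X)$. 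By exactness it then suffices to show that $\rho\big(at(\sQ)\big)=0$.

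The key input is the Bott connection. Since $\sF$ is closed under the Lie bracket, the rule $\nabla_v\overline w:=\overline{[v,w]}$ --- where $v$ is a local section of $\sF$, $\overline w$ a local section of $\sQ$, and $w\in T_X$ any local lift of $\overline w$ --- is independent of the chosen lift and defines a global partial connection $\nabla\colon\sF\otimes\sQ\to\sQ$ on $\sQ$ in the $\sF$-directions: independence of the lift and $\sO_X$-linearity in $v$ use $[v,\sF]\subset\sF$ and $\sF\subset T_X$, while the Leibniz rule in $\overline w$ is immediate. I would include this short verification.

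Then I would run the \v{C}ech computation. Choosing an affine open cover $(U_i)_{i\in I}$ with frames $f_i\colon\sO_{U_i}^{\oplus r}\overset{\sim}{\to}\sQ|_{U_i}$, let $\nabla^i$ denote the flat connection on $\sQ|_{U_i}$ for which the sections of $f_i$ are parallel. A direct computation with $f_{ij}=f_j^{-1}\circ f_i$ gives $\nabla^i-\nabla^j=-\,f_j\circ df_{ij}\circ f_i^{-1}$ on $U_{ij}$, so by \ref{say:atiyah} the \v{C}ech $1$-cocycle $\big(\nabla^i-\nabla^j\big)_{i,j}$ with values in $\Omega^1_X\otimes\sE\hspace{-0.07cm}\textit{nd}_{\sO_X}(\sQ)$ represents $at(\sQ)$. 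Applying $\rho$ just restricts the $\Omega^1_X$-part of each $\nabla^i-\nabla^j$ to $\sF$, so $\rho(at(\sQ))$ is represented by $\big(\nabla^i|_\sF-\nabla^j|_\sF\big)_{i,j}$. Writing $A_i:=\nabla^i|_\sF-\nabla$ --- a section of $\sF^*\otimes\sE\hspace{-0.07cm}\textit{nd}_{\sO_X}(\sQ)$ over $U_i$, being the difference of two partial $\sF$-connections on $\sQ|_{U_i}$ --- the global Bott connection cancels, $\nabla^i|_\sF-\nabla^j|_\sF=A_i-A_j$, and the cocycle representing $\rho(at(\sQ))$ is the \v{C}ech coboundary of the $0$-cochain $(A_i)_{i\in I}$. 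Hence $\rho(at(\sQ))=0$, as desired.

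I expect the only delicate point to be the bookkeeping: matching the standard ``difference of local connections'' representative of the Atiyah class with the explicit cocycle $\big(-f_j\circ df_{ij}\circ f_i^{-1}\big)_{i,j}$ fixed in \ref{say:atiyah}, and keeping the sign conventions straight. There is no deeper obstacle, which is why the lemma is essentially a reformulation of the classical Baum--Bott computation.
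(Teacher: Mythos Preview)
Your proof is correct and follows the same overall strategy as the paper: both reduce, via the long exact sequence coming from $0\to\sQ^*\to\Omega^1_X\to\sF^*\to 0$, to showing that the image of $at(\sQ)$ in $H^1(X,\sF^*\otimes\sE\hspace{-0.07cm}\textit{nd}_{\sO_X}(\sQ))$ vanishes, and both exhibit the relevant \v{C}ech cocycle as a coboundary using the integrability of $\sF$.

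The difference is one of packaging. The paper works on the dual side, choosing local frames $\alpha_i$ for $\sQ^*$; integrability is expressed as $d\sQ^*\subset\sQ^*\wedge\Omega^1_X$, which produces matrices $\beta_i$ of $1$-forms with $d\alpha_i=\alpha_i\wedge\beta_i$, and a direct matrix computation with the transition functions shows that the projected Atiyah cocycle equals $(\beta_j-\beta_i)_{i,j}$, hence is a coboundary. You instead name the underlying object---the Bott partial connection $\nabla$ on $\sQ$---and observe that the projected cocycle $(\nabla^i|_\sF-\nabla^j|_\sF)$ is the coboundary of $(A_i)=(\nabla^i|_\sF-\nabla)$. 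Under the $\sQ/\sQ^*$ duality your $A_i$ corresponds to the paper's $\beta_i$ (restricted to $\sF$), so the two arguments are the same computation in dual languages; your version is somewhat more conceptual, while the paper's is a bare-hands verification that does not invoke the Bott connection by name.
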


\begin{proof}
Since $\sF$ is regular, there is an exact sequence
$$0\to \sQ^* \to \Omega_X^1 \to \sF^*\to 0.$$
Taking cohomology yields 
$$H^1(X,\sQ^*\otimes\sE\hspace{-0.07cm}\textit{nd}_{\sO_X}(\sQ))\to
H^1(X,\Omega^1_X\otimes\sE\hspace{-0.07cm}\textit{nd}_{\sO_X}(\sQ))
\overset{\delta}{\to}
H^1(X,\sF^*\otimes\sE\hspace{-0.07cm}\textit{nd}_{\sO_X}(\sQ))
.$$
We must show that $\delta(at(\sQ))=0$.

Denote by $q$  the rank of $\sQ$.
Choose an affine open cover $(U_i)_{i\in I}$ of $X$ such that, over each $U_i$, 
$\sQ$ admits a frame 
$\alpha_i:\sO_{U_i}^{q} \overset{\sim}{\to}\sQ^*|_{U_i}$.
By assumption, $\sF$ is stable under the Lie bracket.
This is equivalent to saying that $d\sQ^*\subset \sQ^*\wedge \Omega_X^1$. 
Thus, viewing $\alpha_i$ as a line vector whose entries are local sections of $\sQ^*\subset\Omega_X^1$  over $U_i$,
we get a matrix $\beta_i$,
whose entries are local sections of $\sQ^*\subset\Omega_X^1$ over $U_i$, such that $d\alpha_i=\alpha_i \wedge \beta_i$.

For $i,j\in I$, set 
$f_{ij}:={\alpha_j^{-1}}|_{U_{ij}}\circ {\alpha_i}|_{U_{ij}}$. Then 
$$
at(\sQ)=\big[(-{\alpha_j}|_{U_{ij}}\circ {df_{ij}}|_{U_{ij}}\circ {\alpha_i^{-1}}|_{U_{ij}})_{i,j}\big]\in 
H^1(X,\Omega^1_X\otimes\sE\hspace{-0.07cm}\textit{nd}_{\sO_X}(\sQ)).
$$
Since $\alpha_i=\alpha_j \cdot f_{ij}$ on $U_{ij}$,
$$d\alpha_i=d\alpha_j \cdot f_{ij}+\alpha_j \wedge df_{ij},$$
and
$$\alpha_i \wedge \beta_i=\alpha_j \cdot f_{ij}\wedge \beta_i
=\alpha_j \wedge \beta_j \cdot f_{ij}+\alpha_j \wedge df_{ij}.$$
Given $\vec v\in H^0(U_{ij},\sF_{|U_{ij}})$,

\begin{eqnarray*}
\alpha_j \cdot f_{ij}\cdot \beta_i(\vec v) & = & i_{\vec v}(\alpha_j \cdot f_{ij}\wedge \beta_i)\\
& = & i_{\vec v}(\alpha_j \wedge \beta_j \cdot f_{ij}+\alpha_j \wedge df_{ij}).\\
& = & \alpha_j \cdot \beta_j (\vec v) \cdot f_{ij}+\alpha_j \cdot df_{ij}(\vec v).
\end{eqnarray*}
This implies that
$$
\delta(at(\sQ))
=
\big[({\beta_j}|_{U_{ij}}-{\beta_i}|_{U_{ij}})_{i,j}\big],
$$
and thus
$$\delta(at(\sQ))=0.$$
\end{proof}

The following  result is certainly well known. We include a proof for lack of adequate reference.
We remark that a locally free sheaf of finite rank on a Cohen-Macaulay locally 
noetherian
scheme satisfies Serre's condition
$S_k$ for any integer $k\ge 0$.

\begin{lemma}\label{lemma:restriction_cohomology}
Let $X$ be a noetherian scheme, and $\sG$ a coherent sheaf of $\sO_X$-modules on $X$. Suppose that $\sG$ 
satisfies Serre's condition $S_k$ 
for some integer $k\ge 1$. Let $U\subset X$ be an open subset such that $codim_X(X\setminus U)\ge k$. Then 
$H^i(X,\sG)\simeq H^i(U,\sG_{|U})$ for $0\le i\le k-1$.
\end{lemma}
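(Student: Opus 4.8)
The plan is to compare $H^i(X,\sG)$ with $H^i(U,\sG_{|U})$ through local cohomology, and to reduce the vanishing that is needed to a local statement governed by the hypothesis that $\sG$ is $S_k$. Write $Z:=X\setminus U$, a closed subset of $X$, and recall the standard long exact sequence of local cohomology
$$\cdots\longrightarrow H^i_Z(X,\sG)\longrightarrow H^i(X,\sG)\longrightarrow H^i(U,\sG_{|U})\longrightarrow H^{i+1}_Z(X,\sG)\longrightarrow\cdots.$$
Thus it suffices to show that $H^i_Z(X,\sG)=0$ for $i<k$: feeding this into the displayed sequence yields at once the asserted comparison between the cohomology of $\sG$ on $X$ and on $U$.

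To establish this vanishing I would pass to the local cohomology \emph{sheaves} $\underline{H}^q_Z(\sG):=R^q\underline{\Gamma}_Z(\sG)$, which are supported on $Z$, and invoke the hypercohomology spectral sequence $E_2^{p,q}=H^p\big(X,\underline{H}^q_Z(\sG)\big)\Rightarrow H^{p+q}_Z(X,\sG)$. It is then enough to prove that $\underline{H}^q_Z(\sG)=0$ for $q<k$, and since these are coherent sheaves on a noetherian scheme this may be checked on stalks. Fix $x\in Z$, and let $I_x\subset\sO_{X,x}$ be the ideal of $Z$ at $x$; then $\underline{H}^q_Z(\sG)_x\simeq H^q_{I_x}(\sG_x)$, and this vanishes for $q<\textup{grade}(I_x,\sG_x)$. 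Hence the entire statement comes down to the inequality $\textup{grade}(I_x,\sG_x)\ge k$ for every $x\in Z$.

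This is the point at which both hypotheses are used. I would use the standard identification of grade as an infimum of local depths, namely $\textup{grade}(I_x,\sG_x)=\inf\{\,\textup{depth}_{\sO_{X,y}}(\sG_y)\ \mid\ y\in Z,\ x\in\overline{\{y\}}\,\}$, with the convention that a vanishing stalk $\sG_y$ contributes $+\infty$. For any such $y$, since $y$ lies in the closure of some generic point $z$ of $Z$ we have $\dim\sO_{X,y}\ge\dim\sO_{X,z}\ge\codim_X(X\setminus U)\ge k$, and then the hypothesis that $\sG$ is $S_k$ gives $\textup{depth}_{\sO_{X,y}}(\sG_y)\ge\min\big(k,\dim\sO_{X,y}\big)=k$ whenever $\sG_y\ne0$. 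Therefore $\textup{grade}(I_x,\sG_x)\ge k$, the sheaves $\underline{H}^q_Z(\sG)$ vanish for $q<k$, and so do the groups $H^i_Z(X,\sG)$ for $i<k$, which completes the argument. The main obstacle is precisely this commutative-algebra input — the identification of grade with a minimum of local depths, and the resulting passage from the $S_k$ condition together with the codimension bound to the grade estimate; the remaining ingredients (the local cohomology long exact sequence, the reduction to stalks via the spectral sequence, the stalk computation of $\underline{H}^q_Z$) are formal. One should also keep track of which form of Serre's condition is meant: with the formulation $\textup{depth}_{\sO_{X,x}}(\sG_x)\ge\min\big(k,\dim\sO_{X,x}\big)$ the argument applies to an arbitrary coherent $\sG$, whereas if one uses the variant phrased in terms of $\dim(\sG_x)$ one must in addition know that $\sG$ has full support — which holds in the applications we have in mind.
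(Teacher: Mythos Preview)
Your strategy matches the paper's exactly: use the local-cohomology long exact sequence to reduce to the vanishing of $H^i_Z(X,\sG)$, pass to the sheaves $\underline H^q_Z(\sG)$ via the spectral sequence, and verify their vanishing on stalks using the $S_k$ hypothesis together with the codimension bound. Your discussion of the commutative algebra (the identification of grade with an infimum of local depths, and the depth bound coming from $S_k$) is in fact more careful than the paper's one-line assertion that $\sH^i_Y(\sG)=0$ for $0\le i\le k$.

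There is, however, an off-by-one slip, shared with the paper. You write that it suffices to prove $H^i_Z(X,\sG)=0$ for $i<k$, but to obtain $H^{k-1}(X,\sG)\simeq H^{k-1}(U,\sG_{|U})$ from the long exact sequence you also need $H^k_Z(X,\sG)=0$; your depth argument (correctly) only yields the vanishing for $i\le k-1$. The stated lemma is actually false as written: take $X=\mathbb{A}^k$, $\sG=\sO_X$, and $U=X\setminus\{0\}$; then $\codim_X(X\setminus U)=k$ and $\sO_X$ is $S_k$, yet $H^{k-1}(X,\sO_X)=0$ while $H^{k-1}(U,\sO_U)\simeq H^k_{\mathfrak{m}}\big(k[x_1,\dots,x_k]\big)\ne 0$. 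Under the stated hypotheses one obtains an isomorphism for $0\le i\le k-2$ and an injection at $i=k-1$; the full range $0\le i\le k-1$ holds under the stronger hypothesis $\codim_X(X\setminus U)\ge k+1$.
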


\begin{proof}Set $Y:=X\setminus U$. Denote by $\sH_Y^i(\sG)$ the local cohomology sheaf. By assumption, $\sH_Y^i(\sG)=0$ for
$0\le i\le k$. The spectral sequence of local cohomology groups and sheaves implies that $H_Y^i(X,\sG)=0$.
The long exact sequence
$$\cdots \to H_Y^{i}(X,\sG) \to H^i(X,\sG) \to H^i(U,\sG_{|U}) \to H_Y^{i+1}(X,\sG) \to \cdots$$
then yields $H^i(X,\sG)\simeq H^i(U,\sG_{|U})$ for $0\le i\le k-1$, as claimed.
\end{proof}

We can now prove our result in the mildly singular setting.

\begin{proof}[{Proof of Theorem \ref{thm:regular_fano_fols}}]
Let $\sF\subsetneq T_X$ be a codimension 1 
foliation on a klt projective variety $X$, and suppose that 
$X$ and $\sF$ are both regular in codimension 2.
We want to show that  $-K_\sF$ is not ample.

Assume to the contrary that $-K_\sF$ is ample.
Set $\sQ:=T_X/\sF$ and $\sL:=\det(\sF)$. Then $\sQ^{**}\simeq \sO_X(-K_X)\otimes \sL^{\otimes -1}$.
Let $U\subset X$ be a smooth open subset such that $\codim(X\setminus U)\ge 3$, and
$\sF$ is regular on $U$. 
By 
Lemma \ref{lemma:restriction_cohomology}, 
$h^1(U,\sQ^*_{|U})=h^1(X,\sQ^*)=h^1(X,\sO_X(K_X)\otimes \sL)$, 
and the latter vanishes by the Kawamata-Viehweg vanishing theorem (see \cite[Theorem 1.2.5]{kmm}). 
By Lemma \ref{lemma:atiyah_class_foliation}, 
$0=at(\sQ_{|U})\in 
H^1(U,\Omega_U^1)$. This implies that, for any smooth complete curve
$C \subset U$, $(K_X+c_1(\sL))\cdot C=0$. Hence 
$K_X+c_1(\sL)\equiv 0$. Thus, $X$ is a $\bQ$-Fano variety.
Let $k$ be a positive integer such that $k(K_X+c_1(\sL))$ is Cartier. 
By Lemma \ref{lemma:Fano_lin_equ_and_num_equ}, $\sO_X(k(K_\sF+c_1(\sL)))\simeq \sO_X$.
This contradicts Lemma \ref{lemma:vanishing_weak_fano} since
$h^0\big(X,\Omega_X^{q}[\otimes]\sO_X(-K_\sF-c_1(\sL))\big)\neq0$.
\end{proof}

\bibliographystyle{amsalpha}
\bibliography{foliation}

\end{document}